\documentclass[twocolumn,amsthm]{autart}
\usepackage[T1]{fontenc}
\usepackage[utf8]{inputenc}
\usepackage{amsmath,amsthm,amssymb,mathrsfs}
\usepackage{bm}
\usepackage{graphicx}
\usepackage{placeins}
\usepackage{chngcntr}

\usepackage{booktabs,tabularx,array}
\usepackage{enumitem}
\usepackage{algorithm,algpseudocode}
\usepackage[round,authoryear]{natbib}
\makeatletter
\renewcommand{\bibfont}{\@bibliosize}
\makeatother
\usepackage{xcolor}
\definecolor{linkblue}{RGB}{23,61,94}
\usepackage[colorlinks=true,linkcolor=linkblue,citecolor=linkblue,urlcolor=linkblue]{hyperref}

\hypersetup{pdftitle={Amortized Feedback Planning: Turning Model-Based Rollouts into Executable Policies},pdfauthor={Jeonggyu Huh}}
\setlist{nosep,leftmargin=1.7em}
\numberwithin{equation}{section}
\newtheorem{assumption}{Assumption}[section]
\newtheorem{proposition}[assumption]{Proposition}
\newtheorem{theorem}[assumption]{Theorem}
\newtheorem{corollary}[assumption]{Corollary}
\newtheorem{lemma}[assumption]{Lemma}
\theoremstyle{remark}

\newcommand{\E}{\mathbb E}
\newcommand{\R}{\mathbb R}
\newcommand{\N}{\mathcal N}
\newcommand{\F}{\mathcal F}
\newcommand{\U}{\mathcal U}
\newcommand{\Qfun}{\mathcal Q}
\newcommand{\Adv}{\mathcal A}
\newcommand{\diag}{\operatorname{diag}}
\newcommand{\tr}{\operatorname{tr}}
\newcommand{\vech}{\operatorname{vech}}
\newcommand{\chol}{\operatorname{chol}}
\newcommand{\argmin}{\operatorname*{arg\,min}}

\newcommand{\norm}[1]{\left\lVert #1\right\rVert}

\begin{document}
\begin{frontmatter}
\runtitle{Amortized Feedback Planning}
\title{Amortized Feedback Planning: Turning Model-Based Rollouts into Executable Policies}
\author[skku]{Jeonggyu Huh}\ead{jghuh@skku.edu}
\address[skku]{Department of Mathematics, Sungkyunkwan University, Suwon, Republic of Korea}
\begin{keyword}
Stochastic control; model-based control; policy improvement; nonlinear control; optimal control; learning systems.
\end{keyword}
\begin{abstract}
Closed-loop planning accounts for future observation-dependent actions but can be expensive to repeat at deployment. Bellman-gradient (BG) refinement differentiates conditional rollouts through an existing actor, corrects the current action, and stores the result in an executable policy. A backward sweep reuses deployed future feedback; full-horizon rollouts in an identified Gaussian belief model need no learned value critic. A nonlinear error recursion links quadratic continuation error, local correction, and policy storage. A controlled non-LQG example exhibits second-order policy accuracy with consistent storage, while ideal affine LQG admits exact backward recovery. In nonquadratic thrust, BG reduces actor cost by 2.47--3.20\% and remains within 0.13--0.53\% of the tested feedback MPC; original policies execute in $5$--$6\,\mu$s in a seed-0 native audit. With matched storage, BG attains competitive plant costs at about one ninth (arm) and one sixtieth (docking) of feedback-teacher-plus-student construction time on one GPU, excluding shared learning and node preparation. Richer common maps substantially narrow some student--BG gaps. These results expose the roles of learned feedback, local correction, and storage in executable control.
\end{abstract}
\end{frontmatter}

\begin{figure*}[t]
\centering
\includegraphics[width=\linewidth]{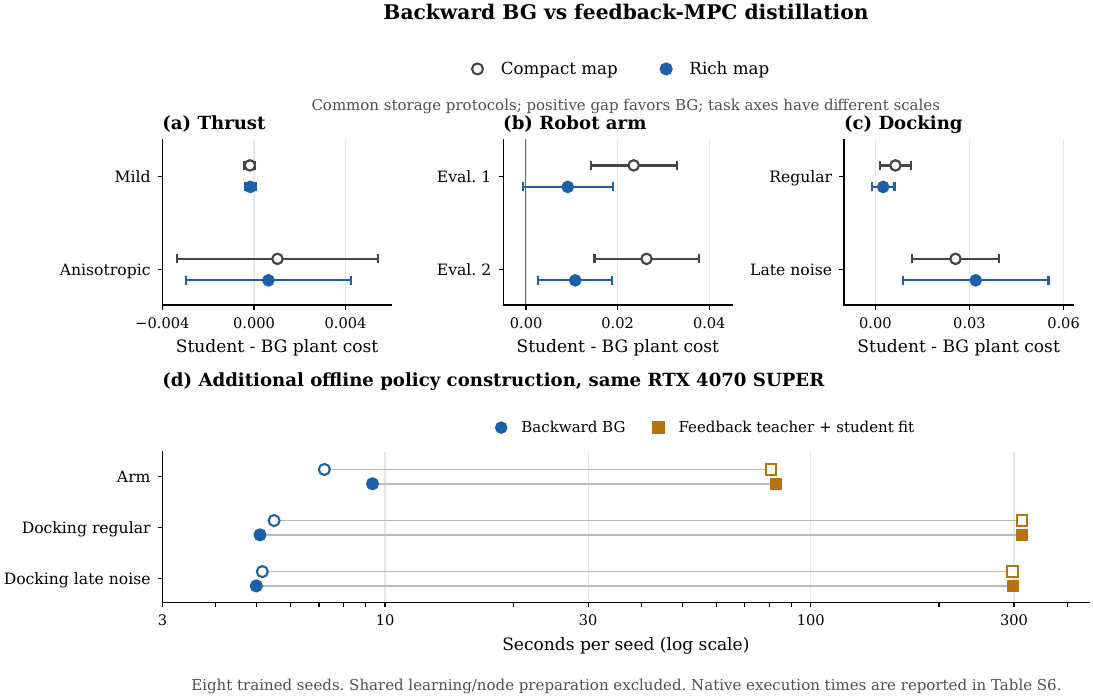}
\caption{BG versus feedback-MPC distillation with matched storage. (a--c) Paired student-minus-BG plant-cost gaps: positive favors BG; whiskers are exploratory 95\% $t$ intervals over eight seeds. Open/filled markers use compact/rich maps; arm panels reuse the same seeds. (d) Additional construction on one RTX~4070 SUPER, excluding shared model/actor learning and node preparation. Thrust totals used different GPUs and are omitted. Section~\ref{sec:matchedstorage} and Table~\ref{tab:matchedstoragefour} give protocols; Online Supplement Table~\ref{tab:matchednative} audits native execution.}
\label{fig:matchedstorage}
\end{figure*}

\section{Introduction}
Future information matters only through decisions that can respond to it. A controller purchasing an observation must anticipate how that observation changes later actions. Optimizing a fixed future action sequence omits this response, even if the controller replans after every measurement. This distinction between open-loop feedback (OLF) execution and closed-loop optimization is central to dual control \citep{barshalom1974,mesbah2018}.

Optimizing future feedback rules accounts for that response but enlarges the decision problem. Our question is whether a trained actor can supply the future feedback structure while a smaller calculation adds local precision. Instead of repeatedly optimizing all future feedback parameters, we differentiate conditional rollouts through an available continuation policy and correct only the current action. Backward construction then uses the policies already corrected at later dates. Regression stores these local decisions so that deployment requires a belief update and one policy evaluation.

This division of work has three parts: learning supplies broad feedback, model derivatives supply local action precision, and regression makes the result reusable. We call the local refinement Bellman-gradient (BG) correction. The base actor is trained by direct policy optimization (DPO), meaning policy search through simulated dynamics, unrelated to direct preference optimization. The pipeline is
\begin{equation}
(Y,C,u)\xrightarrow{\rm fit}\theta\xrightarrow{\rm DPO}\pi^0
\xrightarrow{\rm BG+storage}\bar\pi.
\label{eq:pipeline}
\end{equation}
The model and actor are frozen during refinement. Conditional rollouts retain both the observation response and the state dependence of future actions; no fitted cost-to-go function is needed in the full-horizon construction.

Figure~\ref{fig:matchedstorage} compares BG with feedback-MPC distillation under common map classes and fitting rules. Richer maps narrow several student--BG cost gaps; BG needs less additional construction in the tested arm and docking implementations. Target generation and BG's rebuilt continuation still differ, so this is not an isolated target-quality comparison. Online Supplement Table~\ref{tab:matchednative} audits matched-map execution; Figure~\ref{fig:introcostlatency} separately shows the original task-section policies.

Our central result is a nonlinear error recursion for the stored policies. Optimal-action stationarity makes continuation errors quadratic in future-policy error; a polynomial--kernel storage operator connects node corrections to feedback accuracy, with explicit derivative, safeguard and fixed-ridge terms. Ideal affine LQG gives exact backward recovery and a separate frozen-tail refinement reference. A controlled non-LQG experiment tests the local accuracy regime; scalar docking diagnoses precision lost in regression. These small structured tasks assess capacity, construction cost, latency and the remaining planner gap.

Sections~\ref{sec:beliefmodel}--\ref{sec:metrics} give the model, method and guarantees; Section~\ref{sec:experimentsoverview} evaluates the controllers. Complete proofs and the controlled accuracy example are in Technical Appendix A--E. Protocols, storage comparisons, computational audits and scalar DP are in Online Supplement S1--S6; lettered and S-prefixed references distinguish the two.

\section{Related work and positioning}\label{sec:positioning}
\paragraph{Policy learning and explicit control.}
Explicit MPC moves optimization offline \citep{bemporad2002}; learned approximate MPC fits executable policies under suitable validation conditions \citep{hertneck2018}. Differentiable predictive control learns through model rollouts, including stochastic closed-loop objectives without MPC supervision \citep{drgona2022dpc,drgona2022spdpc}. BG adds current-action refinement to such a base actor, rather than globally retraining its parameters.

\paragraph{Rollout improvement and storage.}
Newton interpretations of policy improvement are established \citep{puterman1979,bertsekas2022}; Kleinman and Hewer connect LQ policy iteration to algebraic Riccati solvers \citep{kleinman1968,hewer1971}. For discounted stationary problems, \citet{santosrust2004} establish local quadratic convergence to a discretized Bellman fixed point with piecewise-linear interpolation. Finite-horizon performance iteration learns current policies through fitted future feedback with sampling and approximation bounds \citep{hure2021}. Our result instead tracks one backward sweep of local action corrections and actual storage in a weighted $C^2$ policy norm, combining quadratic continuation error with derivative, safeguard and polynomial--kernel storage terms, including fixed-ridge bias. Guided policy search learns from optimized trajectories or actions \citep{levine2013gps,levine2014trajectory,zhang2016mpcgps}; POPLIN guides online planning with policies \citep{wang2020}. Our matched-storage comparison uses restricted feedback-MPC teachers.

\paragraph{Conditional derivatives and learned models.}
Stochastic value gradients differentiate simulated returns \citep{heess2015}. Belief-space DDP updates local quadratic values and affine feedback around a trajectory \citep{vandenberg2017}; BG differentiates full conditional rollouts through stored feedback and regresses current-action targets across belief nodes. Dreamer learns actors and values from imagined trajectories, while TD-MPC combines short planning with learned terminal values \citep{hafner2020dreamer,hafner2025dreamer,hansen2022,hansen2024tdmpc2}. \citet{jeon2026adjoint} recover constrained portfolio actions using fixed-latent open-loop BPTT adjoints and a generalized Hamiltonian. BG retains future feedback state derivatives and corrects against deployed continuation, storing policies backward in learned observation-based control. Structured Gaussian identification relates to DeepState and latent control models \citep{rangapuram2018,watter2015,tian2023}; our Gaussian inference requires explicit information closure \citep{sarkka2023}.

\section{Learning a Gaussian belief model}\label{sec:beliefmodel}
The identified model supplies conditional rollouts; the data-generating system is the plant. Fitted-model and plant costs are distinct throughout.

\subsection{Dynamics and information closure}
At dates $t=0,\ldots,T$, let $X_t\in\R^d$ be latent state, $Y_{t+1}\in\R^m$ the next observation, $C_t$ observed predictors and recurrent memory, and $W_t$ observed task variables. The controller chooses $u_t$ using history $\F_t$ before the next observation arrives. With coefficients evaluated at the known $(C_t,u_t)$, the fitted model is
\begin{equation}\label{eq:controlledmodel}
\begin{split}
X_{t+1}&=A_tX_t+a_t+L_t\xi^X_{t+1},\\
Y_{t+1}&=H_tX_{t+1}+D_t\nu_{t+1}.
\end{split}
\end{equation}
Fresh standard Gaussian noises are mutually independent and independent of the past, the initial conditional state is Gaussian, and $D_tD_t^\top\succ0$. Neural coefficient maps permit predictor-dependent dynamics. Thrust changes drift; sensing power changes observation variance.

\begin{assumption}[Predictor information closure]\label{ass:context}
The predictor update is specified by
\begin{equation}
C_{t+1}=\Gamma_t(C_t,Y_{t+1},\zeta_{t+1}),\label{eq:context}
\end{equation}
where fresh $\zeta_{t+1}$ has a specified law independent of $\F_t$, the latent state and model noises. Conditional on $\F_t,Y_{t+1}$, the new predictor supplies no additional unmodeled latent observation. The state $C_t$ includes any required memory.
\end{assumption}
Action-dependent updates use the corresponding extension. Additional latent information must enter the likelihood; an arbitrary encoder does not ensure closure.

\begin{proposition}[Gaussian closure]\label{prop:closure}
Under Assumption~\ref{ass:context} and the preceding noise and initial-state assumptions, $X_t\mid\F_t$ is Gaussian. Its mean $m_t$ and covariance $P_t$ follow the Kalman recursions \eqref{eq:pred}--\eqref{eq:covupdate} in Technical Appendix~\ref{proof:prop:closure}.
\end{proposition}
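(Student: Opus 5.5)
The plan is induction on $t$, with the inductive hypothesis that $X_t\mid\F_t\sim\N(m_t,P_t)$, where $m_t$ and $P_t$ are $\F_t$-measurable. The base case is the assumed Gaussian initial conditional state. For the inductive step, the first point is that, given $\F_t$, the quantities $C_t$ and $u_t$ are known. The action $u_t$ is chosen from $\F_t$, possibly with independent randomization, which we absorb into $\F_t$. Hence the coefficients $A_t,a_t,L_t,H_t,D_t$ in \eqref{eq:controlledmodel} are $\F_t$-measurable and can be treated as constants under the conditional law. The fresh noises $\xi^X_{t+1},\nu_{t+1}$ are standard Gaussian and independent of $(\F_t,X_t)$ and of each other. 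Therefore, conditional on $\F_t$, the vector $(X_t,X_{t+1},Y_{t+1})$ is an affine image of the jointly Gaussian vector $(X_t,\xi^X_{t+1},\nu_{t+1})$, and so it is jointly Gaussian.

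From this joint Gaussianity I would read off the prediction step: $X_{t+1}\mid\F_t$ has mean $A_tm_t+a_t$ and covariance $A_tP_tA_t^\top+L_tL_t^\top$. The innovation $Y_{t+1}$ then has mean $H_t$ times the predicted mean and covariance $S_t=H_t(\cdot)H_t^\top+D_tD_t^\top$. Since $D_tD_t^\top\succ0$, we have $S_t\succ0$, so $S_t$ is invertible even when the predicted covariance is singular. The standard Gaussian conditioning formula then gives the update $(X_{t+1}\mid\F_t,Y_{t+1})\sim\N(m_{t+1},P_{t+1})$, with gain $K=\text{(pred.\ cov)}H_t^\top S_t^{-1}$. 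These are exactly \eqref{eq:pred}--\eqref{eq:covupdate}.

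The remaining step, which I expect to be the main obstacle, is to pass from conditioning on $(\F_t,Y_{t+1})$ to conditioning on $\F_{t+1}$. The history $\F_{t+1}$ also contains $C_{t+1}$ and $W_{t+1}$. By \eqref{eq:context}, $C_{t+1}=\Gamma_t(C_t,Y_{t+1},\zeta_{t+1})$, where $\zeta_{t+1}$ is independent of $\F_t$, of the latent state and of the model noises. So $\F_{t+1}=\F_t\vee\sigma(Y_{t+1})\vee\sigma(\zeta_{t+1})$, together with any other observed variables. By Assumption~\ref{ass:context}, these other variables carry no additional latent information, and I would interpret that clause precisely as conditional independence of $X_{t+1}$ from them given $\F_t,Y_{t+1}$. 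For $\zeta_{t+1}$ itself, independence from $(\F_t,X_{t+1},Y_{t+1})$ implies $\mathcal L(X_{t+1}\mid\F_t,Y_{t+1},\zeta_{t+1})=\mathcal L(X_{t+1}\mid\F_t,Y_{t+1})$. I would verify this with a short lemma: if $Z\perp(G,V)$, then $E[f(V)\mid G,Z]=E[f(V)\mid G]$, which can be checked on product sets.

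Together these steps close the induction with $m_{t+1},P_{t+1}$ being $\F_{t+1}$-measurable. A final remark would note that $P_{t+1}$ depends on the data only through the known coefficients, so the recursion is well defined along any history.
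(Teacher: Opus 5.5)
Your proposal is correct and follows the same route as the paper's proof. Both argue by induction on $t$: freeze the $\F_t$-measurable coefficients, use joint Gaussianity of the next state and observation given $\F_t$, condition on $Y_{t+1}$ to obtain \eqref{eq:pred}--\eqref{eq:covupdate}, and invoke Assumption~\ref{ass:context} to pass from $\F_t\vee\sigma(Y_{t+1})$ to $\F_{t+1}$. Your independence lemma for $\zeta_{t+1}$, and your reading of the closure clause as conditional independence of $X_{t+1}$ from the remaining new observables, spell out what the paper compresses into the phrase ``observing $C_{t+1}$ introduces no omitted likelihood factor.''
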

That appendix gives the conditioning proof and standard formulas. Innovation likelihood identifies parameters by differentiating through the filter and recurrent state \citep{ljung1999,sarkka2023}. Parameters are then frozen. The fitted covariance does not represent parameter uncertainty or misspecification.

\subsection{Conditional simulation and decision state}
Joint rollouts draw the latent state from $\N(m_t,P_t)$, propagate fitted dynamics and observations, and apply the same observation to filtering, task updates and costs. The equivalent innovation sampler is \eqref{eq:predsampler}--\eqref{eq:meansampler}. Unknown future predictors require a causal model; thrust uses a deterministic schedule, and the arm a fixed goal. One-step Gaussian closure does not require a jointly Gaussian path.

The task update and information state are
\begin{align}
W_{t+1}&=\Psi_t(W_t,C_t,u_t,Y_{t+1}),\label{eq:decisionstate}\\
s_t&=(W_t,C_t,m_t,\vech(P_t)),\label{eq:fullstate}\\
s_{t+1}&=\mathcal T_{\theta,t}(s_t,u_t,\omega_{t+1}),\label{eq:belieftransition}
\end{align}
where $\omega$ has a state-independent base law. The belief dimension is $d+d(d+1)/2$; task state and predictor memory add dimensions. Deterministic action-independent coefficient schedules can make $P_t$ a known function of time.

\paragraph{Nonlinear extension.}
The arm uses $X_{t+1}=F_\theta(X_t,u_t)+L_\theta\xi^X_{t+1}$, linear noisy observations, and an EKF. It predicts $m^-_{t+1}=F_\theta(m_t,u_t)$ and $P^-_{t+1}=J_tP_tJ_t^\top+L_\theta L_\theta^\top$, where $J_t=D_xF_\theta(m_t,u_t)$. Its likelihood and Gaussian posterior are approximate. Conditional rollouts restart from the fitted Gaussian and use the differentiable EKF. Restart and filtering errors supplement parameter error; Proposition~\ref{prop:closure} does not certify this extension.

\section{Model-grounded policy correction}\label{sec:method}
The baseline supplies feedback, local derivatives refine current actions, and regression stores them. The future observation response is retained in all conditional rollouts.
\subsection{Conditional derivatives with a fixed future policy}
Use the cost-minimization convention
\begin{equation}
J_\theta(\pi;s_0)
=\E_\theta^\pi\left[
\sum_{t=0}^{T-1}\ell_t(s_t,u_t,\omega_{t+1})+\Phi(s_T)
\,\middle|\,s_0\right].
\label{eq:objective}
\end{equation}
Here $u_t=\pi_t(s_t)\in\U_t(s_t)$.
Revealed costs must enter the information state. Hidden-state scores in our tasks are evaluator-only; training samples them inside the fitted model. For example, $\Phi(s_T)=\E_\theta[\phi(X_T)\mid s_T]$ for hidden terminal score $\phi$, with analogous stage expectations. DPO differentiates full simulated costs through belief dynamics to train $\pi^0$, without assuming global optimality. Define
\begin{align}
V_t^0(s)&=J_\theta(\pi^0;t,s),\qquad V_T^0=\Phi,\\
\Qfun_t^0(s,u)&=\E\left[\ell_t(s,u,\omega)
+V_{t+1}^0(\mathcal T_{\theta,t}(s,u,\omega))\right].
\label{eq:actionvalue}
\end{align}
The cost-to-go satisfies $V_t^0(s)=\Qfun_t^0(s,\pi_t^0(s))$. Complete conditional rollouts evaluate these functions without a learned critic.

At a correction date $t$, choose an admissible future feedback sequence $\rho_{t+1:T-1}$ and define
\begin{equation}
\Qfun_t^\rho(s,u)=\E_\theta[\ell_t(s,u,\omega)+V_{t+1}^\rho(\mathcal T_{\theta,t}(s,u,\omega))].
\label{eq:tailactionvalue}
\end{equation}
The continuation $\rho$ is the baseline tail for static BG and the completed deployed tail $\bar\pi_{t+1:T-1}$ for backward BG. Both anchor at $u^0=\pi_t^0(s)$. Future coefficients are fixed while their dependence on state inputs remains active.

Let $\mathcal G_t$ contain the fitted model, baseline, correction nodes and completed future maps before the current date's derivative samples are drawn. Conditional on $\mathcal G_t$, fresh base-noise units have the declared law, independent of the variable current action. Let $\mathscr C_t^\rho(s,u,\omega_{t+1:T})$ be the complete sampled cost with current action $u$ and continuation $\rho$. This specifies the conditioning for the pathwise statement \citep{heess2015}.

\begin{proposition}[Conditional closed-loop return derivatives]\label{prop:clderivatives}
Condition on $\mathcal G_t$, so that $(s,u^0,\rho)$ and the fitted model are fixed. Suppose the base-noise law is independent of $u$ and, on an open neighborhood $U$ of $u^0$, $\mathscr C_t^\rho$ is almost surely $C^2$ in $u$, with
\[
\E\!\left[\sup_{u\in U}\left\{|\mathscr C_t^\rho|+\norm{\nabla_u\mathscr C_t^\rho}
+\norm{\nabla_{uu}^2\mathscr C_t^\rho}_F\right\}\,\middle|\,\mathcal G_t\right]<\infty.
\]
Use smooth local coordinates for any differentiated positive-definite covariances. Then
\begin{equation}
\begin{aligned}
g_t&=\nabla_u\Qfun_t^\rho(s,u^0)=\E[\nabla_u\mathscr C_t^\rho\mid\mathcal G_t],\\
B_t&=\nabla_{uu}^2\Qfun_t^\rho(s,u^0)=\E[\nabla_{uu}^2\mathscr C_t^\rho\mid\mathcal G_t].
\end{aligned}
\label{eq:directderivatives}
\end{equation}
Averages over fresh conditionally independent rollout units are unbiased for these derivatives conditional on $\mathcal G_t$.
\end{proposition}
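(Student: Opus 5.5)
The argument is differentiation under the conditional expectation, justified by dominated convergence on the neighborhood $U$. First I would make the representation explicit. Conditional on $\mathcal G_t$, the pair $(s,u^0)$, the continuation $\rho$ and all model coefficients are fixed. The fresh base noises $\omega_{t+1:T}$ have a declared law $\mu$ that does not depend on $u$. Hence, for every $u\in U$,
\[
\Qfun_t^\rho(s,u)=\E[\mathscr C_t^\rho(s,u,\omega_{t+1:T})\mid\mathcal G_t]=\int \mathscr C_t^\rho(s,u,\omega)\,\mu(d\omega).
\]
This identity follows from \eqref{eq:tailactionvalue} by iterating the tower property through the belief transition \eqref{eq:belieftransition}. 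The future actions are the state-dependent feedback values $\rho_k(s_k)$, and the cost-to-go $V_{t+1}^\rho$ is itself the conditional expectation of the remaining sampled cost. Because the noise law is $u$-free, $u$ enters only the integrand, not the measure. The integrability bound gives $\E[\sup_U|\mathscr C_t^\rho|\mid\mathcal G_t]<\infty$, so $\Qfun_t^\rho$ is finite on $U$.

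Next comes the gradient. Shrink $U$ to a convex ball around $u^0$, which only tightens the supremum bound. For a coordinate direction $e_i$ and small $h$, the mean value theorem applies along the segment from $u^0$ to $u^0+he_i$, which stays in $U$, because $\mathscr C_t^\rho$ is $C^2$ and hence $C^1$ in $u$ almost surely. This bounds the difference quotient $h^{-1}[\mathscr C_t^\rho(u^0+he_i)-\mathscr C_t^\rho(u^0)]$ by $\sup_U\norm{\nabla_u\mathscr C_t^\rho}$, which is integrable. The quotient converges almost surely to $\partial_i\mathscr C_t^\rho(u^0)$, so dominated convergence gives $\partial_i\Qfun_t^\rho(s,u^0)=\E[\partial_i\mathscr C_t^\rho\mid\mathcal G_t]$. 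The same argument holds at every $u\in U$, so $\nabla_u\Qfun_t^\rho=\E[\nabla_u\mathscr C_t^\rho\mid\mathcal G_t]$ on all of $U$. Repeating it on the gradient, now dominated by $\sup_U\norm{\nabla^2_{uu}\mathscr C_t^\rho}_F$, gives the Hessian identity in \eqref{eq:directderivatives}. Continuity of these expectations on $U$ also follows from dominated convergence, which shows that $\Qfun_t^\rho$ is $C^2$ there.

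Unbiasedness is then immediate. The rollout units are drawn after $\mathcal G_t$ and are conditionally i.i.d. with law $\mu$. Each unit's pathwise gradient and Hessian therefore has conditional mean $g_t$ and $B_t$ respectively, and linearity of conditional expectation carries this to their averages.

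The main obstacle is not the interchange argument but checking its hypotheses in the belief setting. The sampled cost must be a genuine $C^2$ function of $u$ for fixed noise. This requires writing the conditional simulation in reparameterized form, via the innovation sampler \eqref{eq:predsampler}--\eqref{eq:meansampler}, so that randomness enters only through $u$-independent standard Gaussians. It also requires that $u$ propagates smoothly through the Kalman updates of $(m,P)$ and through the state inputs of the future maps $\rho_k$. Covariance square roots such as Cholesky factors need smooth local coordinates near positive-definite $P$; this is why the statement asks for such coordinates, and I would use them so that the composition stays $C^2$. The clause that the fixed $\mathcal G_t$ is independent of the fresh noises is what justifies treating $\rho$ and the nodes as constants inside the integral.
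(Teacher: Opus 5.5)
Your proposal is correct and follows essentially the same route as the paper: a mean-value bound on a smaller convex neighborhood, dominated by the gradient envelope, then conditional dominated convergence for the first interchange. The paper then repeats the argument on the pathwise gradient with the Hessian envelope and gets unbiasedness by linearity. You are slightly more explicit than the paper in establishing the gradient identity on the whole neighborhood before differentiating it again, which is what the second step needs.
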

Technical Appendix~\ref{app:derivativealignment} proves the interchange. BPTT retains future-action and filter state dependence, freezing parameters and base noise; it does not differentiate policy construction. Unbiasedness conditions on $\mathcal G_t$ before current samples, not on maps selected using those samples.

\subsection{Local correction and acceptance}
Fix the continuation $\rho$ declared in \eqref{eq:tailactionvalue}. Write $g_t,B_t$ for its current-action gradient and Hessian at $u^0=\pi_t^0(s)$, suppressing $\rho$ in these two symbols. For a positive-definite action metric $M_t$, a shift $\lambda_t\geq0$ gives
\begin{equation}
\begin{aligned}
\widetilde B_t&=B_t+\lambda_tM_t\succ0,\\
d_t&=\argmin_{d:\,u^0+d\in\U_t(s)}
 \{g_t^\top d+\tfrac12d^\top\widetilde B_td\}.
\end{aligned}
\label{eq:bgqp}
\end{equation}
For unrestricted actions the solution is
\begin{equation}
 d_t=-\widetilde B_t^{-1}g_t.
\label{eq:bgsolve}
\end{equation}
Estimated Hessians are symmetrized. Thrust floors eigenvalues at 0.01 and caps displacements at norm 2. This Euclidean radial safeguard is not the norm-constrained metric-QP solution. Rejection returns $u^0$; numerical failure stops construction.

A proposed action takes the form
\begin{equation}
 u_t^{\rm prop}=u^0+\alpha_t d_t,\qquad 0\leq\alpha_t\leq1.
\label{eq:deploy}
\end{equation}
For this same fixed continuation, exact backtracking can enforce
\begin{equation}
\begin{aligned}
\Qfun_t^\rho(s,u^0+\alpha_td_t)&\le\Qfun_t^\rho(s,u^0)+c\alpha_tg_t^\top d_t,\\
&\hspace{35mm}c\in(0,1).
\end{aligned}
\label{eq:armijo}
\end{equation}
The implementation instead accepts one safeguarded proposal only if separate paired Monte Carlo rollouts give lower estimated cost. Both actions share the same tail and comparison noise, independent of derivative noise (thrust: 128 derivative, 256 acceptance paths). This zero-margin check differs from Armijo sufficient decrease; its error enters the comparison bounds.

\subsection{Backward policy construction on sampled belief nodes}\label{sec:backward}
Sample dated belief nodes $\mathcal S_t=\{s_{t,i}\}_{i=1}^N$ from fitted-model baseline trajectories with declared mean perturbations. Descending in time, correct $\pi_t^0(s_{t,i})$ through the chosen fixed tail. Given accepted targets $v_{t,i}$ (baseline after rejection), store
\begin{equation}
 \widehat f_t=\mathsf{Fit}_t\bigl(\{(s_{t,i},v_{t,i})\}_{i=1}^N;
     \mathcal P_t,\mathcal L_t\bigr).
\label{eq:policyregression}
\end{equation}
Here $\mathcal P_t$ is the map class and $\mathcal L_t$ its loss and selection rule. Date-specific residual maps reuse the actor; optional cross-validation selects using targets only. Fitting before a nonlinear action transform can differ from fitting executed actions (Online Supplement~\ref{sup:regressionrefit}).

In the nonquadratic experiment, the executed policy also limits extrapolation relative to the baseline:
\begin{equation}
\begin{aligned}
\bar\pi_t(s)&=\pi_t^0(s)+\mathcal C_r(\widehat f_t(s)-\pi_t^0(s)),\\
\mathcal C_r(d)&=d\min\{1,r/\norm d\},\qquad r=2.
\end{aligned}
\label{eq:meshdeploy}
\end{equation}
with $\mathcal C_r(0)=0$. Acceptance precedes regression, so targets and executed actions may differ. The cap bounds displacement, not cost. Action constraints require a feasible parameterization or projection, included in the deployed tail. Backward reuse of fitted policies relates to performance iteration \citep{hure2021}; here conditional simulation evaluates continuation cost.

\paragraph{Regularity and deployment.}
Hard caps are piecewise smooth, so pathwise automatic differentiation need not estimate the population Hessian without bias. Regularized curvature can instead serve as a metric under Proposition~\ref{prop:inexactmetric}, without asserting Newton convergence. Technical Appendix~\ref{app:regularity} gives a boundary-term example and Lemma~\ref{lem:smoothcap} a smooth alternative. The cost identities permit nonsmooth policies under integrability, with acceptance and storage errors retained. Algorithm~\ref{alg:main} uses actual deployed future maps throughout.

\begin{algorithm}[t]
\caption{Backward BG policy construction}\label{alg:main}
\begin{algorithmic}[1]
\State Fit and freeze the belief model; train baseline $\pi^0$.
\State Sample dated nodes from fitted-model baseline paths.
\For{$t=T-1,\ldots,0$}
 \State Fix deployed tail $\rho=\bar\pi_{t+1:T-1}$; retain its state derivatives, including deployment transforms.
 \For{each node $s_{t,i}$}
  \State Estimate current-action derivatives around $\pi_t^0(s_{t,i})$ with fresh conditional rollouts.
  \State Safeguard the proposal; compare it with the baseline on separate paired rollouts using the same tail.
  \State Accept the target or retain the baseline; stop and report numerical failures.
 \EndFor
 \State Fit and, where specified, target-validate the physical-action map $\bar\pi_t$, including its safeguard.
\EndFor
\State Freeze and evaluate on held-out episodes.
\State Deploy by updating the belief and evaluating $\bar\pi_t$.
\end{algorithmic}
\end{algorithm}

With $N$ nodes per date and $M$ rollout paths per node (derivative and acceptance combined), the backward sweep requires approximately
\begin{equation}
 N M\sum_{t=0}^{T-1}(T-t)=O(NMT^2)
 \label{eq:constructionwork}
\end{equation}
model steps before differentiation, cross-validation, and regression overhead; dense Hessian work also depends on action dimension. Deployment uses only filtering and policy features, but richer fitted maps increase storage and action latency. Total cost therefore includes DPO, correction, map fitting, and deployment volume, whereas independent MPC requires no DPO training.

Model fitting, construction, and testing use separate data and random streams; held-out evaluation measures transfer beyond the sampled nodes.

\section{Guarantees and evaluation criteria}\label{sec:metrics}
All results condition on the fitted model. The nonlinear recursion connects continuation, correction and storage; affine LQG supplies exact references.

\subsection{Backward improvement and deployment error}
Condition on the completed construction: $\bar\pi$ is fixed and evaluation trajectories are fresh. With $\bar V_t=V_t^{\bar\pi}$, define
\begin{align}
\bar\Qfun_t(s,u)&=\E_\theta[\ell_t(s,u,\omega)
 +\bar V_{t+1}(\mathcal T_{\theta,t}(s,u,\omega))],\nonumber\\
d_t^{\rm back}(s)&=\bar\Qfun_t(s,\bar\pi_t(s))
 -\bar\Qfun_t(s,\pi_t^0(s)).\label{eq:backwarddefect}
\end{align}
Both actions use the same actual corrected tail. Set
\begin{equation}
 \pi^{[t]}=(\pi_0^0,\ldots,\pi_{t-1}^0,\bar\pi_t,\ldots,\bar\pi_{T-1}),
 \quad t=0,\ldots,T.\label{eq:hybridpolicies}
\end{equation}
\begin{theorem}[Backward policy comparison]\label{thm:backward}
Let $\pi^0,\bar\pi$ be measurable admissible policies with the same initial state and terminal cost. If the hybrid-policy costs and the following defects are integrable, then
\begin{equation}
 J_\theta(\bar\pi;s_0)-J_\theta(\pi^0;s_0)
 =\E_\theta^{\pi^0}\sum_{t=0}^{T-1}d_t^{\rm back}(s_t).
 \label{eq:backwardperformance}
\end{equation}
If $d_t^{\rm back}\le b_t$ almost surely under the baseline state laws, then
\begin{equation}
 J_\theta(\bar\pi;s_0)-J_\theta(\pi^0;s_0)
 \le\E_\theta^{\pi^0}\sum_{t=0}^{T-1}b_t(s_t).
 \label{eq:backwardbudget}
\end{equation}
In particular, nonpositive defects imply nominal cost nonincrease.
\end{theorem}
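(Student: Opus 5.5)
This is a performance-difference (telescoping) argument over the hybrid policies $\pi^{[t]}$ of \eqref{eq:hybridpolicies}. By definition $\pi^{[0]}=\bar\pi$ and $\pi^{[T]}=\pi^0$. Hence
\[
J_\theta(\bar\pi;s_0)-J_\theta(\pi^0;s_0)=\sum_{t=0}^{T-1}\bigl\{J_\theta(\pi^{[t]};s_0)-J_\theta(\pi^{[t+1]};s_0)\bigr\}.
\]
The integrability assumption makes every term finite, so the rearrangement is legitimate. It then suffices to identify each increment with $\E_\theta^{\pi^0}[d_t^{\rm back}(s_t)]$.

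For a fixed $t$, the policies $\pi^{[t]}$ and $\pi^{[t+1]}$ agree on dates $0,\ldots,t-1$, where both use $\pi^0$. They also agree on dates $t+1,\ldots,T-1$, where both use $\bar\pi$. They differ only at date $t$: $\pi^{[t]}$ plays $\bar\pi_t$ and $\pi^{[t+1]}$ plays $\pi^0_t$.

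Since the initial state and the transition kernel $\mathcal T_{\theta,\cdot}$ with its state-independent base noise are common to both, the law of $s_t$ is the same under both. It equals the baseline state law at date $t$. I would make this precise by realizing both on the same probability space with the same noise $\omega_{1:T}$, so that $s_{0:t}$ coincide pathwise.

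Next I condition on $s_t$ using the tower property and the Markov structure of the belief state from Proposition~\ref{prop:closure}. Together with \eqref{eq:belieftransition}, this gives that the conditional cost from date $t$ onward is:
\begin{itemize}
\item under $\pi^{[t]}$: $\ell_t(s_t,\bar\pi_t(s_t),\omega_{t+1})+\bar V_{t+1}(s_{t+1})$, with conditional expectation $\bar\Qfun_t(s_t,\bar\pi_t(s_t))$;
\item under $\pi^{[t+1]}$: the analogous expression with action $\pi^0_t(s_t)$, with conditional expectation $\bar\Qfun_t(s_t,\pi^0_t(s_t))$.
\end{itemize}
The tail from $t+1$ is $\bar\pi$ in both cases, which is exactly why $\bar V_{t+1}=V^{\bar\pi}_{t+1}$ appears. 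The stage costs before date $t$ are identical and cancel. Subtracting gives the increment $\E_\theta^{\pi^0}[d_t^{\rm back}(s_t)]$, and summing yields \eqref{eq:backwardperformance}.

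The bound \eqref{eq:backwardbudget} follows by monotonicity of expectation. Since $d_t^{\rm back}(s_t)\le b_t(s_t)$ almost surely under the baseline law of $s_t$, which is the law relevant to each term, each expectation is bounded accordingly. Taking $b_t\equiv0$ gives the nonincrease claim.

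The main technical step is the conditioning identity. We must justify that $\E[\text{cost from }t+1\mid s_{t+1}]=\bar V_{t+1}(s_{t+1})$ when policies are merely measurable, possibly nonsmooth, and possibly include caps. This needs:
\begin{itemize}
\item a measurable version of $\bar V_{t+1}$, obtained by defining it through integration of the measurable cost functional against the product base-noise law via Fubini;
\item the fact that $\omega_{t+2:T}$ is independent of $s_{t+1}$.
\end{itemize}
The integrability hypotheses on hybrid costs and defects provide exactly what is needed for Fubini and for subtracting finite expectations. No differentiability is used anywhere in this argument.
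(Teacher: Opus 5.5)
Your proposal is correct and is essentially the paper's proof. It uses the same hybrids $\pi^{[t]}$ and the same observation that adjacent hybrids share the baseline prefix (so $s_t$ has the baseline law) and the corrected tail; conditioning on $s_t$ identifies each increment with $\E_\theta^{\pi^0}d_t^{\rm back}(s_t)$, the sum telescopes from $\pi^{[0]}=\bar\pi$ to $\pi^{[T]}=\pi^0$, and the budget follows by monotonicity. Your pathwise coupling and measurability remarks only make explicit what the paper leaves implicit; the Markov property you need comes directly from the belief transition \eqref{eq:belieftransition} with state-independent base noise, so Proposition~\ref{prop:closure} is not needed (and would not cover the arm's EKF extension anyway).
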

\begin{proof}
Adjacent hybrids share the baseline prefix and corrected tail. Conditioning on their common state $s_t$ gives
$J_\theta(\pi^{[t]};s_0)-J_\theta(\pi^{[t+1]};s_0)
=\E_\theta^{\pi^0}d_t^{\rm back}(s_t)$.
Sum over $t$: $\pi^{[0]}=\bar\pi$, $\pi^{[T]}=\pi^0$, and the costs telescope.
\end{proof}
The baseline visitation law matches the construction nodes, but finite-node diagnostics do not certify the deployed policy.

\begin{corollary}[Acceptance and interpolation defect]\label{cor:backwarddefect}
At states almost surely under the baseline laws, let a target rule return the baseline or accept $a_t(s)$ using the actual corrected tail, with indicator $I_t(s)$. At an accepted pair assume absolute action-value approximation errors at most $\varepsilon_t(s)$ and
\[
 \widehat{\bar\Qfun}_t(s,a_t)-\widehat{\bar\Qfun}_t(s,\pi_t^0(s))
 \le-\eta_t(s),\qquad\eta_t(s)\ge0.
\]
If $\bar\Qfun_t(s,\cdot)$ is $L_t^{Q}(s)$-Lipschitz between the target and deployed action, and $e_t^{\rm int}=\norm{\bar\pi_t-a_t}$, then
\begin{equation}
 d_t^{\rm back}\le I_t(2\varepsilon_t-\eta_t)+L_t^{Q}e_t^{\rm int}.
 \label{eq:backwardinterpolation}
\end{equation}
When integrable, this is a budget in \eqref{eq:backwardbudget}.
\end{corollary}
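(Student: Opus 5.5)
The plan is a pointwise decomposition of $d_t^{\rm back}(s)$ at a fixed state $s$ in the almost-sure set. I insert the target action $a_t(s)$ (when accepted) between the deployed action $\bar\pi_t(s)$ and the baseline action $\pi_t^0(s)$:
\[
d_t^{\rm back}(s)=\bigl[\bar\Qfun_t(s,\bar\pi_t(s))-\bar\Qfun_t(s,a_t)\bigr]+\bigl[\bar\Qfun_t(s,a_t)-\bar\Qfun_t(s,\pi_t^0(s))\bigr].
\]
On rejection the target is the baseline itself, so I set $a_t=\pi_t^0(s)$. With $I_t=0$ the second bracket then vanishes, and $e_t^{\rm int}=\norm{\bar\pi_t-\pi_t^0}$ measures how far the fitted map departs from the retained baseline target. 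Both cases are covered by the single expression $I_t(\cdot)+L_t^Q e_t^{\rm int}$.

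For the first bracket I use the assumed Lipschitz property of $\bar\Qfun_t(s,\cdot)$ between the target and the deployed action. This bounds its absolute value by $L_t^Q(s)\norm{\bar\pi_t(s)-a_t}=L_t^Q e_t^{\rm int}$. This is the interpolation (storage) term. For the second bracket, when $I_t=1$, I add and subtract the estimates:
\[
\bar\Qfun_t(s,a_t)-\bar\Qfun_t(s,\pi_t^0)=\bigl[\bar\Qfun_t-\widehat{\bar\Qfun}_t\bigr](s,a_t)+\bigl[\widehat{\bar\Qfun}_t(s,a_t)-\widehat{\bar\Qfun}_t(s,\pi_t^0)\bigr]+\bigl[\widehat{\bar\Qfun}_t-\bar\Qfun_t\bigr](s,\pi_t^0).
\]
The two approximation brackets are each at most $\varepsilon_t(s)$ in absolute value, and the middle term is at most $-\eta_t(s)$ by the acceptance hypothesis. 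This gives $2\varepsilon_t-\eta_t$. Combining the two brackets yields \eqref{eq:backwardinterpolation}.

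The main subtlety is that the action values are those with the actual corrected tail $\bar\pi_{t+1:T-1}$, which is exactly the $\bar\Qfun_t$ appearing in \eqref{eq:backwarddefect}. The corollary's hypothesis that acceptance uses the actual corrected tail is what makes the error bounds $\varepsilon_t$ refer to the same function. I will state this identification explicitly; no rollout or tail mismatch term then appears. Measurability of $I_t,a_t$ is implicit in the hypotheses.

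Finally, the right-hand side $b_t:=I_t(2\varepsilon_t-\eta_t)+L_t^Qe_t^{\rm int}$ bounds $d_t^{\rm back}$ almost surely under the baseline state laws. When it is integrable, it qualifies as the budget in Theorem~\ref{thm:backward}, and \eqref{eq:backwardbudget} applies directly.
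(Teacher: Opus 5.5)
Your proposal is correct and matches the paper's proof: insert the target action between the deployed and baseline actions. On acceptance, bound the target-versus-baseline gap by adding the two approximation errors to the acceptance margin, giving $2\varepsilon_t-\eta_t$; on rejection that gap is zero; the storage gap is bounded by $L_t^{Q}e_t^{\rm int}$. Your explicit convention $a_t=\pi_t^0(s)$ on rejection is the reading the paper uses implicitly, and it is what makes $e_t^{\rm int}$ the storage error in both cases.
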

The two accepted-pair errors contribute $2\varepsilon_t$, rejection gives zero target defect, and the Lipschitz bound controls storage. Full proofs and baseline-continuation results are in Technical Appendix~\ref{proof:thm:backward} and~\ref{app:baselinecomparison} (Proposition~\ref{prop:descent}, Theorem~\ref{thm:improve}, Corollary~\ref{cor:approx}). Regression and caps may alter targets without renewed acceptance at unseen states; fresh closed-loop tests evaluate improvement.

\subsection{Nonlinear backward accuracy}\label{sec:nonlinearaccuracy}
Write $\pi^\star=\pi^{\theta,\star}$ and, for $v\ge1$, define
\begin{equation}
 \norm f_{2,v}=\sum_{k=0}^2\frac1{k!}\sup_s\frac{\norm{D^kf(s)}}{v(s)}.
 \label{eq:nlpolicynorm}
\end{equation}
Set $E_t=\norm{\bar\pi_t-\pi_t^\star}_{2,v_t}$. Let $\epsilon_t^0$ be the maximum baseline action error relative to $\pi_t^\star$ on construction nodes. State derivatives enter because earlier rollouts differentiate future feedback.

\paragraph{Regularity and storage assumptions.}
Fix a smooth policy class containing the stored maps and an interior optimum, with integrable envelopes permitting differentiation under expectations. Put $H_{t,\alpha}^\rho(s)=Q_{t,uu}^\star(s,\pi_t^\star(s)+\alpha(\rho_t(s)-\pi_t^\star(s)))$. For weights $w_t\ge \chi_tv_t^2$, assume uniformly over this class and $\alpha\in[0,1]$ that
\begin{align}
 \norm{H_{t,\alpha}^\rho}_{2,\chi_t}
 &\le L_t^\star,\nonumber\\
 \norm{\mathsf P_t^\rho f}_{2,w_t}&\le C_t^{\rm tr}\norm f_{2,w_{t+1}}.
 \label{eq:nlmainregularity}
\end{align}
Here $\mathsf P_t^\rho$ is the one-step expectation operator under policy $\rho$. Finite weighted moments of transition derivatives also bound the action gradient and Hessian of $\E f(\mathcal T_{\theta,t}(s_i,u,\omega))$ by $\Gamma_{g,t}\norm f_{2,w_{t+1}}$ and $\Gamma_{H,t}\norm f_{2,w_{t+1}}$ on node action balls. The explicit moment conditions in Technical Appendix~\ref{app:nlcontinuation} allow unbounded states with suitable growth envelopes.

On each radius-$r_{B,t}$ ball about $\pi_t^\star(s_i)$, assume $2\mu_t I\preceq Q_{t,uu}^\star\preceq L_{*,t}I$ and a Lipschitz actual-tail Hessian. The used matrix satisfies $\widehat B_t\succeq\widehat m_tI$; its Hessian, gradient and solve residual errors relative to that tail are $\delta_{H,t},\delta_{g,t},\delta_{\rm lin,t}$. Proposal shaping changes the raw step by at most $\eta_{\rm node,t}$. A paired acceptance difference has absolute error at most $\tau_t$. Finally, storage must satisfy
\begin{equation}
 E_t\le s_t^{\rm reg}b_t+F_t^{\rm reg}+\eta_{\rm dep,t}.
 \label{eq:nlmainstorage}
\end{equation}
Here $b_t$ bounds node-target error, $s_t^{\rm reg}$ is storage stability, $F_t^{\rm reg}$ includes approximation and regularization bias, and $\eta_{\rm dep,t}$ bounds the deployment-transform error in the norm \eqref{eq:nlpolicynorm}. Equation~\eqref{eq:nlstoragebudget} derives this bound for the polynomial--kernel map, including its intermediate cap and arithmetic error. Node MSE alone does not control derivatives or off-node approximation.

\begin{theorem}[Nonlinear stored-policy error recursion]\label{thm:nonlinearbackward}
Under the preceding assumptions, fix a finite horizon and impose the local radius checks stated below. There are finite nonnegative constants such that
\begin{align}
 E_t\le{}&A_t^{\rm N}(\epsilon_t^0)^2
  +\sum_{j>t}B_{tj}^{\rm tail}E_j^2+F_t^{\rm reg}+\eta_{\rm dep,t}\nonumber\\
 &+C_t^{\rm err}(\delta_{H,t}\epsilon_t^0+\delta_{g,t}
 +\delta_{\rm lin,t}+\eta_{\rm node,t}+\sqrt{\tau_t}).
 \label{eq:nonlinearbackward}
\end{align}
Future errors concern the actual stored and deployed policies.
\end{theorem}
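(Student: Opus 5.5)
The argument goes backward in time at a fixed date $t$, with the future stored errors $E_j$, $j>t$, taken as given. It splits into three pieces: the error of the continuation action-value relative to the optimal one, the error of the node-level correction relative to $\pi_t^\star$, and the storage bound \eqref{eq:nlmainstorage}. The target is a node bound of the form $b_t\le A(\epsilon_t^0)^2+\sum_{j>t}B E_j^2+C(\delta_{H,t}\epsilon_t^0+\delta_{g,t}+\delta_{\rm lin,t}+\eta_{\rm node,t}+\sqrt{\tau_t})$. Substituting it into \eqref{eq:nlmainstorage} and absorbing $s_t^{\rm reg}$ into the constants then gives \eqref{eq:nonlinearbackward}.

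\textbf{Step 1: continuation error is quadratic.} Let $\rho=\bar\pi_{t+1:T-1}$ be the deployed tail. Telescope $V^\rho_{t+1}-V^\star_{t+1}$ through hybrid policies, as in Theorem~\ref{thm:backward} but relative to $\pi^\star$. Each defect $Q^\star_j(s,\rho_j(s))-Q^\star_j(s,\pi^\star_j(s))$ is expanded by Taylor's theorem with integral remainder. Stationarity $Q^\star_{j,u}(s,\pi_j^\star(s))=0$ removes the linear term, leaving
\[
\int_0^1(1-\alpha)\,(\rho_j-\pi_j^\star)^\top H^\rho_{j,\alpha}(\rho_j-\pi_j^\star)\,d\alpha .
\]
The norm in \eqref{eq:nlmainregularity} is submultiplicative up to the $1/k!$ weights, so this defect has $\norm{\cdot}_{2,w_j}\lesssim L_j^\star E_j^2$, using $w_j\ge\chi_jv_j^2$. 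Propagating back to date $t+1$ by repeated application of $\mathsf P^\rho$ costs a product of $C^{\rm tr}$ factors. Then $\Gamma_{g,t},\Gamma_{H,t}$ convert this into bounds on the gradient and Hessian error of $\Qfun_t^\rho(s_i,\cdot)$ against $Q_t^\star(s_i,\cdot)$ on the node ball: both are $\lesssim\sum_{j>t}E_j^2$. The $C^2$ control of the state dependence is needed here, because the earlier rollouts differentiate the future feedback maps.

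\textbf{Step 2: node correction.} At node $s_i$, let $e=\pi_t^0(s_i)-\pi_t^\star(s_i)$, so $\norm e\le\epsilon_t^0$. The exact gradient of $Q^\star$ at $u^0$ equals $Q^\star_{uu}e+O(\norm e^2)$ by the Lipschitz Hessian. The raw step is $-\widehat B_t^{-1}(g_t+\text{errors})$, and writing $\widehat B_t=Q^\star_{uu}+(\widehat B_t-Q^\star_{uu})$ gives the standard inexact-Newton estimate
\[
\norm{u^0+d-\pi^\star_t}\le \widehat m_t^{-1}\bigl(L\norm e^2+\delta_{H,t}\norm e+\delta_{g,t}+\textstyle\sum_{j>t}B E_j^2\bigr)+\delta_{\rm lin,t}.
\]
This bound holds when the ball radii dominate the step, which is the local radius check. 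Shaping adds $\eta_{\rm node,t}$.

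For acceptance, strong convexity $Q^\star_{uu}\succeq 2\mu_t I$ on the ball gives $Q(u)-Q(\pi^\star)\ge\mu_t\norm{u-\pi^\star}^2$. An accepted target satisfies a cost comparison up to $\tau_t$ and the continuation error. The rejected baseline has error $\epsilon_t^0$, which may not be small, but if rejection occurs then the proposal was within $\tau_t$ in cost. With the upper curvature $L_{*,t}$, this yields an error of order $\sqrt{(\text{proposal cost gap}+\tau_t)/\mu_t}$. I expect the case analysis for rejection to be the main obstacle: one must show the baseline is rejected-safe, i.e.\ that either the baseline was already near-optimal or the proposal wins. The plan is to use the quadratic sandwich, $\mu_t\norm{\cdot}^2\le$ cost gap $\le L_{*,t}\norm{\cdot}^2$, to bound the error of the retained action by the proposal error plus $\sqrt{\tau_t/\mu_t}$. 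Square roots of the quadratic terms then reappear only linearly, because the proposal error is already $O((\epsilon^0)^2+\dots)$; this is where $\sqrt{\tau_t}$ enters.

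\textbf{Step 3: assembly.} Take $b_t$ to be the maximum over nodes from Step 2 and insert it into \eqref{eq:nlmainstorage}. Set $A_t^{\rm N}=s_t^{\rm reg}L/\widehat m_t$, let $B^{\rm tail}_{tj}$ collect the $s_t^{\rm reg}\Gamma\prod C^{\rm tr}L^\star$ products, and put the remaining factors into $C_t^{\rm err}$. The horizon is finite, so all constants are finite. The radius checks must be verified at each date so that $\pi_t^0(s_i)$, the proposal, and $\bar\pi_t$ lie in the balls where the curvature bounds hold.
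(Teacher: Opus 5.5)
Your Steps 1 and 3 match the paper's proof. Telescoping hybrids relative to $\pi^\star$ is the unrolled Bellman difference $D_t=Q_t^\star(\cdot,\rho_t)-V_t^\star+\mathsf P_t^\rho D_{t+1}$. Measuring the inexact Newton step against $Q_{t,uu}^\star$ rather than the actual-tail Hessian $q''$, with $q=\Qfun_t^\rho(s_i,\cdot)$, only adds cross terms such as $\Gamma_{H,t}\mathcal R_t\epsilon_t^0$. The check $\epsilon_t^0\le r_{B,t}$ lets you absorb these into $B^{\rm tail}_{tj}$.

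The gap is in the acceptance step. The paired test estimates differences of $q$, not of $Q_t^\star$, so what holds in both branches is $q(y)\le q(p)+\tau_t$ for the returned label $y$. You then run the quadratic sandwich on $Q_t^\star$ and carry \emph{the continuation error} as an additive cost term without saying what it is. The natural candidate is the value discrepancy $\sup|q-Q_t^\star|$ on the ball. It is of order $\mathcal R_t$ and lands under the square root, producing $\sqrt{\mathcal R_t/\mu_t}\sim(\sum_{j>t}E_j^2)^{1/2}$, which is first order in the future errors. The recursion \eqref{eq:nonlinearbackward}, whose point is the $B^{\rm tail}_{tj}E_j^2$ dependence, then does not follow.

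Nor can the term simply be dropped, as your final sentence does. A rejected baseline may sit on the far side of the actual-tail minimizer $x_\rho$. It can tie an exactly optimal proposal in $q$ while lying about $2\norm{x_\rho-\pi_t^\star(s_i)}$ from the optimum. So the label error is not controlled by the proposal's error and $\tau_t$ alone.

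The paper keeps the tail at gradient level by running the sandwich on $q$ itself, around its own minimizer. Write $u^*=\pi_t^\star(s_i)$ and suppress $t$ below.
\begin{itemize}
\item $Q_{uu}^\star\succeq2\mu I$ together with the check $\Gamma_H\mathcal R_t\le\mu$ gives $\mu I\preceq q''\preceq LI$, where $L=L_*+\Gamma_H\mathcal R_t$.
\item The check $\Gamma_g\mathcal R_t<\mu r_B$ makes the minimizer $x_\rho$ of $q$ on the ball interior, with $\norm{x_\rho-u^*}\le c=\Gamma_g\mathcal R_t/\mu$.
\end{itemize}
Then
\[
\tfrac{\mu}{2}\norm{y-x_\rho}^2\le q(y)-q(x_\rho)\le q(p)-q(x_\rho)+\tau\le\tfrac{L}{2}\norm{p-x_\rho}^2+\tau ,
\]
which yields $\norm{y-u^*}\le\kappa b_P+(\kappa+1)c+\sqrt{2\tau/\mu}$. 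The continuation therefore enters only through $c$, linearly in $\mathcal R_t$.

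Your $Q^\star$ route can also be repaired, and it then needs neither the curvature check on $q$ nor interiority. Use only the differenced discrepancy $|(q-Q^\star)(u^0)-(q-Q^\star)(p)|\le\Gamma_g\mathcal R_t\norm{u^0-p}$, which follows from the gradient bound of your Step 1. Then $a=\norm{y-u^*}$ and $b=\norm{p-u^*}$ satisfy
\[
\mu a^2\le \tfrac{L_*}{2}b^2+\tau+\Gamma_g\mathcal R_t(a+b).
\]
Solving this quadratic inequality and applying AM--GM to the cross term $\sqrt{\Gamma_g\mathcal R_t b/\mu}$ gives $a\le\tfrac32\Gamma_g\mathcal R_t/\mu+(\sqrt{L_*/(2\mu)}+\tfrac12)b+\sqrt{\tau/\mu}$. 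One of these two arguments is needed to close Step 2. As written, the acceptance step does not establish the quadratic dependence on future stored errors that the theorem asserts.
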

\begin{proof}
\emph{Continuation.} For a tail $\rho$, put $h_t=\rho_t-\pi_t^\star$ and $D_t=V_t^\rho-V_t^\star$. Optimal-action stationarity and Taylor's formula give
\begin{align*}
 A_t^\rho(s)&:=Q_t^\star(s,\rho_t(s))-V_t^\star(s)\\
 &=\int_0^1(1-\alpha)h_t(s)^\top H_{t,\alpha}^\rho(s)h_t(s)\,d\alpha.
\end{align*}
The weighted product bound makes its $C^2$ norm at most $L_t^\star\norm{h_t}_{2,v_t}^2/2$. The Bellman difference
$D_t=Q_t^\star(\cdot,\rho_t)-V_t^\star+\mathsf P_t^\rho D_{t+1}$, $D_T=0$, therefore yields, with $\rho_{>t}=\bar\pi_{>t}$,
\begin{equation}
 \norm{D_{t+1}}_{2,w_{t+1}}
 \le\mathcal R_t=\sum_{j>t}\frac{L_j^\star}{2}
 \Bigl(\prod_{k=t+1}^{j-1}C_k^{\rm tr}\Bigr)E_j^2.
 \label{eq:nlmaintail}
\end{equation}
The action-gradient and Hessian discrepancies are at most $\Gamma_{g,t}\mathcal R_t$ and $\Gamma_{H,t}\mathcal R_t$.

\emph{Correction and acceptance.} Suppress $t$ and set $q=Q_t^\rho(s_i,\cdot)$, $u^*=\pi_t^\star(s_i)$. If $\Gamma_H\mathcal R_t\le\mu$, then $\mu I\preceq q''\preceq LI$ on the ball, where $L=L_*+\Gamma_H\mathcal R_t$. Taylor expansion bounds the raw Newton proposal $n=u^0+d$ by $\norm{n-u^*}\le b_N$, where
\begin{equation}
 b_N=
 \frac{L_h(\epsilon^0)^2/2+\delta_H\epsilon^0+\delta_g+
 \delta_{\rm lin}+\Gamma_g\mathcal R_t}{\widehat m},
 \label{eq:nlmainnode}
\end{equation}
where $L_h$ is the Hessian Lipschitz constant. The shaped proposal $p$ has error at most $b_N+\eta_{\rm node}$. If $\Gamma_g\mathcal R_t<\mu r_B$, the local minimizer $x_\rho$ is interior and $\norm{x_\rho-u^*}\le c=\Gamma_g\mathcal R_t/\mu$. The paired comparison accepts $p$ when $\widehat\Delta<0$ and otherwise returns $u^0$. For $u^0,p$ in the ball, the resulting label $y$ satisfies $q(y)\le q(p)+\tau$, including rejection. With $\kappa=\sqrt{L/\mu}$, strong convexity and smoothness imply
\[
 \norm{y-u^*}\le b=\kappa(b_N+\eta_{\rm node})
 +(\kappa+1)c+\sqrt{2\tau/\mu}.
\]

\emph{Storage and closure.} Substitute this target bound into \eqref{eq:nlmainstorage}, then substitute \eqref{eq:nlmaintail}. Collect coefficients to obtain \eqref{eq:nonlinearbackward}; Technical Appendix~\ref{app:nltargetstorage}--\ref{app:nlclosure} gives constants and storage algebra. Prescribe envelopes and radii $\overline E_t$ before construction. Descending in time, insert later radii in $\mathcal R_t$, check the curvature/interiority conditions and $\epsilon_t^0,b_{N,t}+\eta_{\rm node,t}\le r_{B,t}$, and require the explicit storage bound to be at most $\overline E_t$, preserving the envelopes. This verifies the local regime in the norm used at earlier dates.
\end{proof}

\begin{corollary}[A consistent second-order regime]\label{cor:nonlinearquadratic}
Under Theorem~\ref{thm:nonlinearbackward}, let the constants and inverse-curvature margins be uniform as $\epsilon\to0$, preserve the regularity class, and assume $\epsilon_t^0=O(\epsilon)$. If $\delta_{H,t}=O(\epsilon)$, $\tau_t=O(\epsilon^4)$, and
\[
 \delta_{g,t}+\delta_{\rm lin,t}+\eta_{\rm node,t}
 +F_t^{\rm reg}+\eta_{\rm dep,t}=O(\epsilon^2),
\]
then $E_t=O(\epsilon^2)$ at every date for sufficiently small $\epsilon$ within the local regime.
\end{corollary}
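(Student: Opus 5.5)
Corollary~\ref{cor:nonlinearquadratic} follows from the recursion \eqref{eq:nonlinearbackward} of Theorem~\ref{thm:nonlinearbackward} by backward induction on $t=T-1,\ldots,0$. The induction hypothesis is $E_j\le K_j\epsilon^2$ for all $j>t$, with constants $K_j$ independent of $\epsilon$. Since the horizon is finite and all constants $A_t^{\rm N},B_{tj}^{\rm tail},C_t^{\rm err}$ and inverse-curvature margins are uniform as $\epsilon\to0$, it suffices to show that each term on the right-hand side of \eqref{eq:nonlinearbackward} is $O(\epsilon^2)$. We must also check that the local radius conditions used in the theorem hold for all sufficiently small $\epsilon$.

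\emph{Termwise orders.} At the base date $t=T-1$ the tail sum is empty. For the remaining terms:
\begin{itemize}
\item $A_t^{\rm N}(\epsilon_t^0)^2=O(\epsilon^2)$, since $\epsilon_t^0=O(\epsilon)$.
\item $\delta_{H,t}\epsilon_t^0=O(\epsilon)\cdot O(\epsilon)=O(\epsilon^2)$.
\item $\sqrt{\tau_t}=O(\epsilon^2)$, since $\tau_t=O(\epsilon^4)$.
\item $\delta_{g,t}+\delta_{\rm lin,t}+\eta_{{\rm node},t}+F_t^{\rm reg}+\eta_{{\rm dep},t}=O(\epsilon^2)$ by hypothesis.
\end{itemize}
For $t<T-1$, the induction hypothesis gives $\sum_{j>t}B_{tj}^{\rm tail}E_j^2\le\sum_{j>t}B_{tj}^{\rm tail}K_j^2\epsilon^4$. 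This is $O(\epsilon^4)$, which is $O(\epsilon^2)$ for $\epsilon\le1$. Collecting terms gives $E_t\le K_t\epsilon^2$ with an explicit $K_t$ independent of $\epsilon$, which closes the induction.

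\emph{Local regime.} This is the step I expect to be the main obstacle. The recursion is only valid once the radius and curvature checks from the closure step of the proof of Theorem~\ref{thm:nonlinearbackward} hold at each date. I would choose the envelopes $\overline E_t=K_t\epsilon^2$ and verify each check in order of $\epsilon$:
\begin{itemize}
\item Inserting these envelopes gives $\mathcal R_t=O(\epsilon^4)$ by \eqref{eq:nlmaintail}. So $\Gamma_{H,t}\mathcal R_t\le\mu_t$ and $\Gamma_{g,t}\mathcal R_t<\mu_t r_{B,t}$ hold for small $\epsilon$, because $\mu_t$ and $r_{B,t}$ are fixed positive margins.
\item $\epsilon_t^0=O(\epsilon)\le r_{B,t}$ for small $\epsilon$.
\item By \eqref{eq:nlmainnode}, $b_{N,t}=O(\epsilon^2)$ because $\widehat m_t$ is uniformly bounded below. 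Hence $b_{N,t}+\eta_{{\rm node},t}=O(\epsilon^2)\le r_{B,t}$.
\item The explicit storage bound is at most $K_t\epsilon^2=\overline E_t$ by the same termwise count as in the induction.
\end{itemize}

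The subtle point is the choice of $\epsilon_0$. The constants $K_t$ are built recursively from later $K_j$, so the threshold must be fixed after all the $K_t$ are determined. Since the horizon is finite, I would take $\epsilon_0$ to be the minimum over dates of the per-date thresholds. This is legitimate only because the corollary assumes uniformity of the constants and preservation of the regularity class. Without that assumption, the constants could degenerate as $\epsilon\to0$ and the induction would fail.
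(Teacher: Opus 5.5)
Your proposal is correct and follows the paper's own argument: the terminal date has $\mathcal R_{T-1}=0$, later $O(\epsilon^2)$ policy errors make $\mathcal R_t$ fourth order, and backward induction over the finite horizon gives $E_t=O(\epsilon^2)$ at every date. Your extra step of checking the local radius and curvature conditions with envelopes $\overline E_t=K_t\epsilon^2$, and taking the threshold $\epsilon_0$ after all $K_t$ are fixed, makes explicit what the paper leaves implicit in the phrase ``within the local regime.''
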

\begin{proof}
At $T-1$, $\mathcal R_{T-1}=0$ gives $E_{T-1}=O(\epsilon^2)$. Second-order future errors make \eqref{eq:nlmaintail} fourth order, so backward induction gives the same current-policy rate.
\end{proof}
The same weighted advantage and transition bounds yield
$0\le J_\theta(\bar\pi;s_0)-J_\theta(\pi^\star;s_0)=O(\epsilon^4)$ after one complete backward sweep at each fixed $s_0$. This is a nominal value-accuracy result, not an unconditional improvement claim or a rate for the learned controllers. Hard-cap crossings can violate derivative regularity; the smooth-cap bound is \eqref{eq:nlsmoothcapaccuracy}. Acceptance margins can replace the sufficient $\tau_t=O(\epsilon^4)$ allocation; see \eqref{eq:nlacceptmargin} and Technical Appendix~\ref{app:nlacceptsampling}.

\paragraph{Storage consistency.}
Fixed direct-action ridge need not reproduce an optimal baseline (Proposition~\ref{prop:nlridgefloor}); its consistency defect does not estimate experimental cost loss. The sufficient representation and ridge scaling in \eqref{eq:nlridgeschedule} are not certified by fixed budgets. Reducing ridge can amplify finite-sample target noise. Since optimal-policy error $\epsilon$ is unobserved, these rates describe a sufficient regime rather than an implemented tuning rule; a node-correction-magnitude proxy is possible but untested and does not certify the regime. Section~\ref{sec:matchedstorage} measures storage capacity empirically.

\paragraph{Controlled non-LQG evidence.}
Table~\ref{tab:nlregimecompact} shows second-order residual storage with decreasing ridge, first-order error with fixed residual ridge, and a fixed-direct-ridge floor. The two-decision test uses a fixed smooth perturbation represented in the residual dictionary and the actual smooth stored tail; its decreasing-ridge continuation-gradient order is 3.970. These sampled local orders use bounded intervals (Technical Appendix~\ref{app:nlregime}).
\begin{table}[H]
\centering\small\setlength{\tabcolsep}{3pt}
\caption{Sampled weighted $C^2$ policy-error orders, $\epsilon:0.0125\to0.00625$. Direct storage includes a reference-policy feature.}
\label{tab:nlregimecompact}
\begin{tabular}{lrr}
\toprule
Storage & Date 0 & Date 1\\
\midrule
Residual, $\lambda=\epsilon$ & 1.993 & 1.983\\
Residual, $\lambda=0.05$ & 1.033 & 0.982\\
Direct, $\lambda=0.01$ & $-0.0001$ & $-0.010$\\
\bottomrule
\end{tabular}
\end{table}

\subsection{Local action correction and alignment}
\begin{proposition}[Inexact metric correction]\label{prop:inexactmetric}
Fix a continuation $\rho$ and $q(u)=\Qfun_t^\rho(s,u)$ with $L_\nabla$-Lipschitz gradient on a convex neighborhood of the feasible segment $[u^0,u^0+d]$. For symmetric $\mathsf M\succ0$ and $\norm{\widehat g-\nabla q(u^0)}\le\delta$, put
\begin{equation}
 \xi=\max\{0,\widehat g^\top d+\tfrac12d^\top\mathsf M d\}.
 \label{eq:metricresidual}
\end{equation}
For $\alpha\in[0,1]$, $q(u^0+\alpha d)-q(u^0)\le b^{\rm met}(\alpha)$, where
\begin{align}
 b^{\rm met}(\alpha)&=\alpha(\xi-\tfrac12d^\top\mathsf M d+\delta\norm d)
 \nonumber\\
 &\quad+\tfrac12L_\nabla\alpha^2\norm d^2.
 \label{eq:metricbudget}
\end{align}
If $d\ne0$ and $\xi+\delta\norm d<d^\top\mathsf M d/2$, small positive $\alpha$ gives strict descent.
\end{proposition}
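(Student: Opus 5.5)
The plan is to apply the descent lemma for functions with Lipschitz gradient to $q$ along the segment from $u^0$ to $u^0+\alpha d$, and then replace the unknown exact gradient by the estimate $\widehat g$ and the metric residual $\xi$.

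\emph{Descent lemma.} Since the segment $[u^0,u^0+\alpha d]\subset[u^0,u^0+d]$ lies in the convex neighborhood where $\nabla q$ is $L_\nabla$-Lipschitz, the standard argument applies. Write $q(u^0+\alpha d)-q(u^0)=\int_0^1\nabla q(u^0+\sigma\alpha d)^\top\alpha d\,d\sigma$ and compare the integrand with $\nabla q(u^0)$. This gives
\[
q(u^0+\alpha d)-q(u^0)\le\alpha\nabla q(u^0)^\top d+\tfrac12L_\nabla\alpha^2\norm d^2 .
\]

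\emph{Linear term.} I would split $\nabla q(u^0)^\top d=\widehat g^\top d+(\nabla q(u^0)-\widehat g)^\top d$. Cauchy--Schwarz and the gradient-error bound control the second piece by $\delta\norm d$. For the first piece, the definition \eqref{eq:metricresidual} gives $\widehat g^\top d+\tfrac12 d^\top\mathsf M d\le\xi$, because $\xi$ is the positive part of that quantity. Hence $\widehat g^\top d\le\xi-\tfrac12d^\top\mathsf M d$. Multiplying by $\alpha\ge0$ and substituting into the descent-lemma bound gives exactly $b^{\rm met}(\alpha)$ in \eqref{eq:metricbudget}.

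\emph{Strict descent.} Set $\kappa:=\tfrac12d^\top\mathsf M d-\xi-\delta\norm d$. The hypothesis gives $\kappa>0$, and $d\ne0$ with $\mathsf M\succ0$ makes the quadratic term meaningful. Then
\[
b^{\rm met}(\alpha)=-\alpha\kappa+\tfrac12L_\nabla\alpha^2\norm d^2,
\]
which is negative for every $0<\alpha<\min\{1,\,2\kappa/(L_\nabla\norm d^2)\}$; if $L_\nabla=0$, any $\alpha\in(0,1]$ works. The displayed inequality then yields $q(u^0+\alpha d)<q(u^0)$.

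The only step needing care is the descent lemma. Feasibility and convexity of the neighborhood are what guarantee that the intermediate points $u^0+\sigma\alpha d$ stay where the Lipschitz bound holds. Everything else is elementary algebra. In particular, no exactness of $d$ as a QP solution is used, which is the point of allowing an inexact metric.
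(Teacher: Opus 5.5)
Your proposal is correct and follows the paper's proof essentially step for step. You bound $\nabla q(u^0)^\top d\le\widehat g^\top d+\delta\norm d\le\xi-\tfrac12 d^\top\mathsf M d+\delta\norm d$, substitute this into the descent-lemma inequality along the segment, and obtain strict decrease exactly when $\kappa=\tfrac12 d^\top\mathsf M d-\xi-\delta\norm d>0$ and $L_\nabla\alpha\norm d^2<2\kappa$, with every $\alpha\in(0,1]$ working if $L_\nabla=0$.
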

Thus positive metrics can support descent without estimating the population Hessian. Feasible metric-QP and radially shortened unconstrained steps have $\xi=0$; further projection may not.

\begin{proposition}[Action alignment]\label{prop:alignment}
Let $\U$ be nonempty, closed and convex, $q_\star=\Qfun_t^{\theta,\star}(s,\cdot)$ be $\mu$-strongly convex with minimizer $u_\theta^\star$, and $q_\star,q_\rho=\Qfun_t^\rho(s,\cdot)$ have differentiable neighborhood extensions. For feasible $v$, put $\varepsilon_{\rm cont}^{\rho}(v)=\norm{\nabla q_\rho(v)-\nabla q_\star(v)}$ and
\begin{equation}
 r_\rho(v)=\operatorname{dist}(0,\nabla q_\rho(v)+N_\U(v)),
 \label{eq:alignmenterrors}
\end{equation}
where $N_\U(v)$ is the convex normal cone. Then
\begin{equation}
 \norm{v-u_\theta^\star}\le
 (r_\rho(v)+\varepsilon_{\rm cont}^{\rho}(v))/\mu.
 \label{eq:alignmentbound}
\end{equation}
No convexity of $q_\rho$ is required.
\end{proposition}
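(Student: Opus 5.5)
The plan is to combine the variational characterization of $u_\theta^\star$ with strong monotonicity of $\nabla q_\star$. Write $g_\star=\nabla q_\star$ and $g_\rho=\nabla q_\rho$, evaluated on the neighborhood extensions.

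\emph{Optimality of $u_\theta^\star$.} Since $q_\star$ is $\mu$-strongly convex and differentiable and $\U$ is closed and convex, its minimizer over $\U$ satisfies $0\in g_\star(u_\theta^\star)+N_\U(u_\theta^\star)$. Equivalently,
\[
g_\star(u_\theta^\star)^\top(w-u_\theta^\star)\ge0
\qquad\text{for all } w\in\U .
\]

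\emph{Residual at $v$.} Because $N_\U(v)$ is closed, choose $n\in N_\U(v)$ attaining the distance in \eqref{eq:alignmenterrors}. Then $e:=g_\rho(v)+n$ has $\norm{e}=r_\rho(v)$. Rewrite
\[
g_\star(v)=e-n+\bigl(g_\star(v)-g_\rho(v)\bigr),
\]
where the last term has norm at most $\varepsilon_{\rm cont}^\rho(v)$.

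\emph{Strong monotonicity.} Strong convexity gives
\[
\bigl(g_\star(v)-g_\star(u_\theta^\star)\bigr)^\top(v-u_\theta^\star)\ge\mu\norm{v-u_\theta^\star}^2 .
\]
This requires differentiability of $q_\star$ on a convex set containing the segment $[v,u_\theta^\star]$. Both endpoints lie in $\U$, which is convex, so the neighborhood extension suffices.

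Next bound the left side from above.
\begin{itemize}
\item The optimality condition at $u_\theta^\star$, applied with $w=v$, gives $-g_\star(u_\theta^\star)^\top(v-u_\theta^\star)\le0$.
\item The normal-cone definition at $v$, applied with the feasible point $u_\theta^\star$, gives $n^\top(u_\theta^\star-v)\le0$, so $-n^\top(v-u_\theta^\star)\le0$.
\end{itemize}
Substituting the decomposition of $g_\star(v)$ therefore yields
\[
\mu\norm{v-u_\theta^\star}^2\le\bigl(e+g_\star(v)-g_\rho(v)\bigr)^\top(v-u_\theta^\star)
\le\bigl(r_\rho(v)+\varepsilon^\rho_{\rm cont}(v)\bigr)\norm{v-u_\theta^\star}.
\]
If $v=u_\theta^\star$ the bound is trivial. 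Otherwise, dividing by $\mu\norm{v-u_\theta^\star}$ gives \eqref{eq:alignmentbound}.

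Convexity of $q_\rho$ is never used, since $q_\rho$ enters only through its gradient at the single point $v$.

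The only delicate step is keeping the sign conventions straight when combining the two variational inequalities: one at $u_\theta^\star$ for $q_\star$, and one at $v$ for the normal-cone element. Domain issues for the neighborhood extensions also need care, but the feasibility of both points together with the convexity of $\U$ handles them.
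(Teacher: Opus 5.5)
Your proof is correct and follows essentially the paper's argument. Both combine strong monotonicity of $\nabla q_\star$ with the optimality inclusion $-\nabla q_\star(u_\theta^\star)\in N_\U(u_\theta^\star)$ and the normal-cone inequality at $v$, which the paper packages as monotonicity of $N_\U$, then add and subtract $\nabla q_\rho(v)$ and apply Cauchy--Schwarz. The only cosmetic difference is that you fix a distance-attaining $n\in N_\U(v)$ up front, whereas the paper keeps $n$ arbitrary and minimizes over it at the end.
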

The bound uses the population residual after storage and safeguards. Exact selected-tail minimization may retain continuation error. Technical Appendix~\ref{app:metricimplementation} and~\ref{app:derivativealignment} give both proofs and the deployment and continuation identities \eqref{eq:metricbackwardbudget} and~\eqref{eq:continuationgradientgap}.

\subsection{Affine LQG as an exact reference}\label{sec:lqgreference}
For a known deterministic context schedule, take
\begin{align}
 X_{t+1}&=AX_t+Bu_t+b_t+L\xi^X_{t+1},\nonumber\\
 Y_{t+1}&=H_tX_{t+1}+D_t\nu_{t+1},\label{eq:lqgmodel}\\
 \ell_t(X,u)&=\tfrac12X^\top C_t^xX+\tfrac12u^\top C_t^uu,
 \label{eq:lqgcost}
\end{align}
with $\Phi(X)=X^\top C_T^xX/2$, action-independent noise covariances, $C_t^x,C_T^x\succeq0$ and $C_t^u\succ0$. Fitted coefficients define the nominal Riccati reference \eqref{eq:lqgrecursion}; plant coefficients define an evaluator-only oracle.

\begin{proposition}[Exact backward recovery in fitted affine LQG]\label{prop:lqgbackwardexact}
Fix this model, finite horizon and initial covariance, with finite coefficients and covariances and unrestricted actions. At each date, apply one exact Newton step under the constructed tail from any finite $u_t^0(m)$:
\begin{equation}
 v_t(m)=u_t^0(m)-B_t^{-1}g_t,
 \label{eq:lqgbackwardnewton}
\end{equation}
where $g_t,B_t$ are the population action gradient and Hessian at $(m,u_t^0(m))$. Without damping, curvature modification or clipping, exact affine storage (or unregularized full-rank affine least squares) yields
\begin{equation}
 \bar\pi_t(m)=-K_tm-k_t=u_t^{\theta,\star}(m),\quad t<T,
 \label{eq:lqgbackwardexact}
\end{equation}
for all $m$ after one backward sweep. Exact nonincrease acceptance may be included.
\end{proposition}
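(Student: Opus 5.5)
We argue by backward induction on $t=T-1,\ldots,0$, with the hypothesis that after date $t$ has been processed the completed tail $\bar\pi_{t:T-1}$ coincides with the Riccati optimal feedback $u_s^{\theta,\star}(m)=-K_sm-k_s$ for $s\ge t$. Because the context schedule is deterministic and the noise covariances do not depend on the action, the covariance recursion $P_t$ is a known function of time. The information state therefore reduces, for decision purposes, to the filter mean $m_t$. Moreover, the innovation sampler \eqref{eq:predsampler}--\eqref{eq:meansampler} makes $m_{t+1}$ affine in $(m_t,u_t)$, with Gaussian innovation noise whose law does not depend on $(m_t,u_t)$. The hidden-state cost expectations in \eqref{eq:lqgcost} become quadratic in $m$, plus the trace terms $\tfrac12\tr(C^x_tP_t)$, which do not depend on the policy.

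\emph{Step 1 (tail value is quadratic).} Suppose the tail $\rho=\bar\pi_{t+1:T-1}$ is affine, so that $\rho_s(m)=-K_sm-k_s$. Under an affine feedback tail, the affine--Gaussian mean dynamics make $V^\rho_{t+1}(m)=\tfrac12m^\top S_{t+1}m+\sigma_{t+1}^\top m+c_{t+1}$ an exact quadratic. Since $\rho$ equals the optimal tail, this quadratic is the Riccati value, i.e.\ $S_{t+1}$ is the solution of \eqref{eq:lqgrecursion}. The base case $t=T-1$ is immediate: the tail is empty and $V_T=\Phi$ reduces to $\tfrac12m^\top C_T^xm+\text{const}$.

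\emph{Step 2 (Newton step is exact).} Write $m_{t+1}=Am+Bu+b_t+G_t\varepsilon$, where $G_t$ is the Kalman gain times the innovation factor. With this substitution, \eqref{eq:tailactionvalue} gives $\Qfun^\rho_t(m,u)$ as a quadratic in $u$ with Hessian $B_t=C_t^u+B^\top S_{t+1}B$. This Hessian is positive definite because $C^u_t\succ0$ and $S_{t+1}\succeq0$, and it is constant in $(m,u)$. For a quadratic, one exact Newton step from any finite starting point $u^0_t(m)$ lands on the unique minimizer. Hence $v_t(m)=-(B_t)^{-1}\bigl(B^\top S_{t+1}(Am+b_t)+B^\top\sigma_{t+1}\bigr)$, which is the Riccati gain formula $-K_tm-k_t$ and is independent of the baseline. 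Proposition~\ref{prop:clderivatives} identifies $g_t,B_t$ with these population derivatives, and its integrability hypotheses hold because Gaussian moments of a quadratic cost are finite.

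\emph{Step 3 (storage and acceptance).} The targets are exactly affine in $m$. Exact affine storage therefore reproduces them. Unregularized full-rank least squares over an affine class containing the target has zero residual, so the fitted map equals the target at every $m$, not only at the nodes. Exact nonincrease acceptance is harmless. Where $v_t(m)$ differs from $u^0_t(m)$, it is the strict minimizer and is accepted. Where it is not strictly better, it already equals $u^0_t(m)$ by uniqueness of the minimizer. In both cases the label is $v_t(m)$, and the induction closes. The main obstacle is Step 1: one must verify that the value of an affine tail in belief coordinates is an exact quadratic with deterministic curvature $S_{t+1}$, and that the action does not affect $P_t$ (no dual effect). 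This is where the assumptions of action-independent covariances and a known schedule are used. I would check it directly from the Kalman recursions in Technical Appendix~\ref{proof:prop:closure}.
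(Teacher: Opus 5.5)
Your proposal is correct and follows essentially the paper's own argument: backward induction with a quadratic tail value, a quadratic action value with constant Hessian $C_t^u+B^\top S_{t+1}B\succ0$, one exact Newton step landing on $-K_tm-k_t$ from any starting action, unique zero-residual affine storage, and acceptance that cannot alter the label. The one difference is that the paper carries the quadratic value form and $S_t\succeq0$ inside the induction (substituting the minimizer to recover the Riccati recursion, with a Schur-complement argument for $S_t\succeq0$), whereas you take the Riccati value of the optimal tail and $S_{t+1}\succeq0$ from standard LQ theory.
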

\begin{proof}
The last action value is strictly convex quadratic, so Newton recovers its affine minimizer and exact storage preserves it. An optimal future tail gives the same property one date earlier. Induction proves the result; Technical Appendix~\ref{proof:prop:lqgbackwardexact} supplies details.
\end{proof}
For a frozen affine baseline tail, Theorem~\ref{thm:lqgrefinement} in Technical Appendix~\ref{app:theoryproofs} instead bounds corrected coefficient error by $C_\delta\epsilon^2$ for initial error $\epsilon\le\delta$, paralleling classical Newton/policy improvement \citep{kleinman1968,hewer1971,puterman1979,bertsekas2022}. Constants need not be uniform in horizon or dimension. Refreshed-tail propagation and realization errors are in \eqref{eq:refreshedaffinebudget} and Corollary~\ref{cor:deployment}.

\subsection{Model transfer and evaluation}
For plant cost $J_\star$ of the complete controller, including its fitted filter, suppose $|J_\star(\pi^j)-J_\theta(\pi^j)|\le e_j$. These bounds guarantee plant improvement whenever the nominal improvement exceeds $e_0+e_1$. They require separate evidence (Technical Appendix~\ref{app:transferaccounting}); fitted covariance does not measure parameter uncertainty.

Plant cost uses each deployed policy's own observation history. Action MSE uses common histories and either the fitted optimum or an evaluator-only plant oracle. Policy and model/filter discrepancies have a cross term, so their squared errors are not additive; see \eqref{eq:erroridentity}. Conditional action diagnostics concern their declared continuation and independent rollout noise, and do not replace deployed-policy evaluation (Online Supplement~\ref{sec:rocketdiagnostics}).

\section{Experiments}\label{sec:experimentsoverview}
The experiments test inexpensive executable feedback in thrust, active-sensing docking, and a nonlinear arm. Each task uses eight models fitted to action--observation data without latent-state labels, with supplied model families and priors. Filtering is exact within fitted thrust/docking models; the arm uses an approximate EKF.

Open-loop feedback (OLF) MPC optimizes action sequences and replans after observations; restricted feedback MPC (FB-MPC in tables) optimizes future belief-dependent actions. Neither uses DPO/BG initialization or continuation. Costs use paired physical trajectories and each controller's own belief history. Unless stated otherwise, intervals are exploratory 95\% Student-$t$ intervals over eight fitted-model seed means. Map families were developed after earlier diagnostics and selected by fitted-model target actions; fresh episodes reuse the training seeds. Full protocols and solver budgets are in Online Supplement~\ref{sup:completeexperiments}.

\paragraph{Reading the time columns.}
Timing markers: $\mathrm C$ denotes date-zero native medians across eight seed medians on one Ryzen~9~7950X3D thread; $\mathrm C_0$ denotes seed~0. $\mathrm G$ denotes archived cold-start A6000 PyTorch planners at dates $0,8,15$; $\mathrm P$, a seed-0 date-zero docking PyTorch CPU pilot; $\mathrm A$, acados CPU execution with panel-specific validation. All exclude filtering, fitting, and construction. Preloading, parity, and audit scopes are detailed in Online Supplement~\ref{sup:currentnative}. Cross-marker quotients are not algorithmic speedups.

\paragraph{Online planning and executable feedback.}
Figure~\ref{fig:introcostlatency} compares costs and available latencies of the original task policies; Figure~\ref{fig:matchedstorage} gives the separate common-map comparison.

\begin{figure*}[t]
\centering
\includegraphics[width=\linewidth]{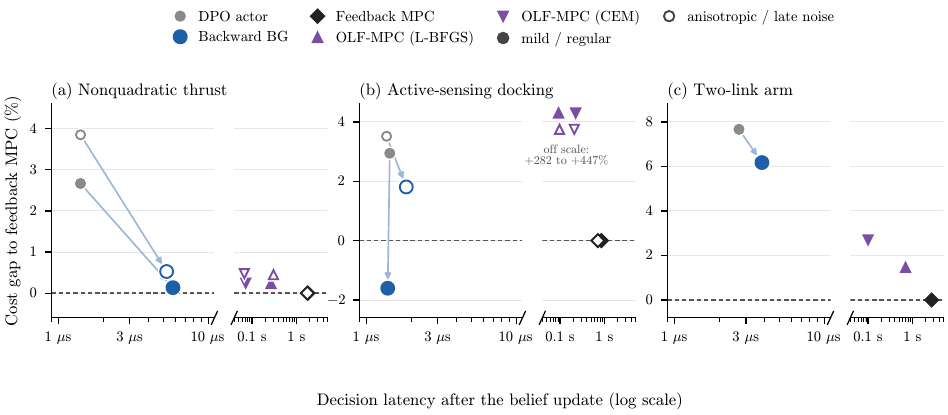}
\caption{\small Original task policies: eight-seed plant-cost gaps $100(J/J_{\mathrm{FB}}-1)$ versus online feedback MPC and available action latencies. Arrows connect DPO to backward BG; filled/open symbols denote mild/anisotropic thrust and regular/late-noise docking. Docking OLF costs are off scale. Timings exclude filtering and follow the task tables' implementation scopes; their quotients are not algorithmic speedups. Historical students appear in Online Supplement Figure~\ref{fig:historicaldeployment}; Figure~\ref{fig:matchedstorage} gives the separate common-map comparison.}
\label{fig:introcostlatency}
\end{figure*}

\subsection{Thrust: exact recovery and inexpensive refinement}\label{sec:thrustfamily}
\subsubsection{LQG reference}\label{sec:rocketlqg}
The thrust model has six latent position/velocity/wind states, two actions and $T=16$. Quadratic costs provide exact fitted Riccati feedback. BG reduces fitted-optimal action MSE by over 99.99\%; plant costs match Riccati feedback at reported precision, approximately realizing Proposition~\ref{prop:lqgbackwardexact}. The analytic controller remains cheaper and more accurate when available. Online Supplement~\ref{sup:lqgpanel} gives the full eight-seed reference panel (Table~\ref{tab:compactlqg}); identification and training are in Section~\ref{sec:rocketprotocol}.

\subsubsection{Nonquadratic cost and the value of local calculation}\label{sec:rocketnonquadratic}
Quartic position and coupled-action penalties remove the Riccati solution. BG stores target-validated maps and rebuilds earlier corrected tails. In Table~\ref{tab:compactthrust}, backward BG lowers DPO cost by 2.47\%/3.20\%; its advantage over the equally enriched static map repeats on fresh paired episodes (Online Supplement~\ref{sup:regressionrefit}).

\begin{table}[!tbp]
\centering\small\setlength{\tabcolsep}{3pt}
\caption{Nonquadratic thrust: eight-seed plant costs; paired cells stack mild above anisotropic. BG uses target-validated maps. Timing: native $\mathrm C$ (eight seeds), $\mathrm C_0$ (seed~0); A6000 planners $\mathrm G$. Fixed-belief acados results have no episode costs (Online Supplement Table~\ref{tab:currentacados}).}
\label{tab:compactthrust}
\begin{tabular}{lrr}
\toprule
Controller  &  Plant cost  &  Action time\\
\midrule
DPO  & \shortstack[r]{1.697575\\2.221337} & \shortstack[r]{$1.407$\\$1.409$}$\,\mu\mathrm s^{\mathrm C}$\\
DPO+150  & \shortstack[r]{1.698865\\2.227725} & \shortstack[r]{$1.448$\\$1.421$}$\,\mu\mathrm s^{\mathrm C}$\\
Static BG  & \shortstack[r]{1.658031\\2.156786} & \shortstack[r]{$5.587$\\$5.385$}$\,\mu\mathrm s^{\mathrm C_0}$\\
Backward BG  & \shortstack[r]{1.655677\\2.150216} & \shortstack[r]{$5.820$\\$5.280$}$\,\mu\mathrm s^{\mathrm C_0}$\\
OLF, L-BFGS  & \shortstack[r]{1.657344\\2.148585} & \shortstack[r]{$269.205$\\$305.617$}$\,\mathrm{ms}^{\mathrm G}$\\
OLF, CEM  & \shortstack[r]{1.657510\\2.149064} & \shortstack[r]{$72.472$\\$67.529$}$\,\mathrm{ms}^{\mathrm G}$\\
FB-MPC  & \shortstack[r]{1.653464\\2.138939} & \shortstack[r]{$1826.580$\\$1771.013$}$\,\mathrm{ms}^{\mathrm G}$\\
\bottomrule
\end{tabular}
\end{table}

BG is within 0.13\%/0.53\% of tested restricted feedback MPC; paired intervals are in Online Supplement~\ref{sup:regressionrefit}. Anisotropic OLF planners remain modestly better; mild OLF contrasts are small. Section~\ref{sec:matchedstorage} compares executable feedback-MPC students under common storage.

\paragraph{Checking a dedicated native solver.}
At fixed held-out beliefs, acados solves the same Gaussian-moment objectives. OLF SQP converges; capped feedback SQP reaches its iteration limit. These fixed-belief results supply no closed-loop costs (Online Supplement Table~\ref{tab:currentacados}).

\subsection{Active-sensing docking}\label{sec:dockingmain}
Docking couples bounded thrust to sensing power that reduces the next position-observation variance. Its three-state Gaussian model tracks position, velocity, and hidden disturbance over 12 decisions; late noise increases observation noise in the second half. Success imposes terminal position/velocity tolerances. BG fits physical actions and rebuilds earlier corrected tails (Online Supplement~\ref{sup:dockingprotocol}).

With future thrust fixed during optimization, OLF assigns no value to sensing and chooses zero power. DPO and BG retain observation-dependent actions; restricted feedback MPC uses tanh-affine belief-mean thrust and a datewise sensing schedule. A validated 128-path acados OLF solver succeeds in all 98,304 calls yet retains the zero-sensing continuation's poor performance (Table~\ref{tab:compactdocking}).

\begin{table}[!tbp]
\centering\small\setlength{\tabcolsep}{3pt}
\caption{Docking: eight-seed means; paired cells stack regular above late noise. BG uses physical-action fits. Timing: native $\mathrm C$ (eight seeds), $\mathrm C_0$ (seed~0); validated acados $\mathrm A$; seed-0 CPU planner pilot $\mathrm P$.}
\label{tab:compactdocking}
\begin{tabular}{lrrr}
\toprule
Controller  &  Plant cost  &  Success (\%)  &  Action time\\
\midrule
DPO  & \shortstack[r]{0.8603\\4.0925} & \shortstack[r]{81.35\\48.05} & \shortstack[r]{$1.434$\\$1.366$}$\,\mu\mathrm s^{\mathrm C}$\\
DPO+extra  & \shortstack[r]{0.8546\\4.0849} & \shortstack[r]{81.79\\48.41} & \shortstack[r]{$1.493$\\$1.404$}$\,\mu\mathrm s^{\mathrm C}$\\
Static BG  & \shortstack[r]{0.8249\\4.0318} & \shortstack[r]{83.13\\50.61} & \shortstack[r]{$1.374$\\$1.399$}$\,\mu\mathrm s^{\mathrm C_0}$\\
Backward BG  & \shortstack[r]{0.8223\\4.0250} & \shortstack[r]{82.57\\50.68} & \shortstack[r]{$1.389$\\$1.848$}$\,\mu\mathrm s^{\mathrm C_0}$\\
OLF, L-BFGS  & \shortstack[r]{4.5741\\15.1212} & \shortstack[r]{35.60\\26.15} & \shortstack[r]{$92.901$\\$97.753$}$\,\mathrm{ms}^{\mathrm P}$\\
OLF, CEM  & \shortstack[r]{4.5660\\15.1397} & \shortstack[r]{36.01\\26.10} & \shortstack[r]{$225.704$\\$209.911$}$\,\mathrm{ms}^{\mathrm P}$\\
OLF, acados  & \shortstack[r]{4.5675\\15.1203} & \shortstack[r]{35.60\\26.07} & \shortstack[r]{$0.827$\\$0.822$}$\,\mathrm{ms}^{\mathrm A}$\\
FB-MPC  & \shortstack[r]{0.8357\\3.9535} & \shortstack[r]{82.42\\49.98} & \shortstack[r]{$855.195$\\$713.144$}$\,\mathrm{ms}^{\mathrm P}$\\
\bottomrule
\end{tabular}
\end{table}

Backward BG reduces DPO cost by $0.03796$/$0.06746$ and raises success by 1.22/2.63 points in regular/late-noise docking. The regular cost gain repeats on fresh episodes; the late-noise change is unresolved. Feedback MPC costs $0.01335$ $[0.00455,0.02215]$ more than BG in regular docking and $0.07158$ $[0.01710,0.12606]$ less under late noise. The fitting loss thus affects the comparison, while DPO already supplies the sensing-dependent continuation missing from OLF (Online Supplement~\ref{sup:regressionrefit}).

\paragraph{Where precision is lost.}
A scalar companion admits numerical belief-space DP (Online Supplement~\ref{sup:scalardp}). Backward BG closes 53.9\% of the eight-seed DPO-to-DP mean cost gap. Terminal-node targets are nearly optimal, but regression loses precision; exact terminal-thrust replacement lowers full-policy cost (Online Supplement Table~\ref{tab:scalardpterminal}). This identifies a storage bottleneck at those nodes, without attributing all three-state docking error to regression. A second sweep gives no reliable improvement.

\subsection{Nonlinear robot arm}\label{sec:robotmain}
The two-link arm has four latent angle/velocity states, noisy angles, bounded two-dimensional torque, and 16 decisions. Its EKF is approximate. Policies use belief means, covariance diagonals, goal, and time; rollouts retain full covariance. Costs combine squared/quartic fingertip, velocity, and torque penalties; attainment is a terminal threshold event. Target validation selects residual/Nystr\"om BG maps (Online Supplement~\ref{sup:robotprotocol}).

\begin{table}[!tbp]
\centering\small\setlength{\tabcolsep}{3pt}
\caption{Two-link arm: eight-seed plant cost and attainment, 256 paired episodes/seed. BG uses target-validated maps. Timing: date-zero native $\mathrm C$ (eight seeds, same-process BG audit); archived A6000 planners $\mathrm G$.}
\label{tab:compactarm}
\begin{tabular}{lrrr}
\toprule
Controller & Plant cost & Attain. (\%) & Action time\\
\midrule
DPO &6.77187&48.49&$2.698\,\mu\mathrm s^{\mathrm C}$\\
DPO+150 &6.74886&49.85&$2.734\,\mu\mathrm s^{\mathrm C}$\\
Static BG &6.70148&62.26&$3.659\,\mu\mathrm s^{\mathrm C}$\\
Backward BG &6.67805&59.23&$3.837\,\mu\mathrm s^{\mathrm C}$\\
OLF, L-BFGS &6.38183&66.06&$699.24\,\mathrm{ms}^{\mathrm G}$\\
OLF, CEM &6.45990&64.01&$100.14\,\mathrm{ms}^{\mathrm G}$\\
FB-MPC, $K=0$ &6.36050&66.11&$700.32\,\mathrm{ms}^{\mathrm G}$\\
FB-MPC &6.28979&66.02&$2703.97\,\mathrm{ms}^{\mathrm G}$\\
\bottomrule
\end{tabular}
\par\smallskip\footnotesize Native parity and scope: Online Supplement~\ref{sup:currentnative}.
\end{table}

Backward BG lowers DPO cost by $0.09382$ (interval $[0.07156,0.11609]$) and raises attainment by 10.74 points $[8.65,12.83]$ (Table~\ref{tab:compactarm}); fresh episodes with the same model seeds give cost $6.68845$. Static BG attains 3.03 points more often but costs $0.02343$ more. BG costs 4.64\% more than OLF L-BFGS; feedback MPC lowers BG cost by 5.81\%. Thrust's near-planner performance therefore does not extend to the arm.

Same-noise finite differences give refreshed-tail Hessian discrepancy up to 0.0874, not a population-bias estimate. With nonsmooth caps, branchwise curvature is interpreted as a regularized metric under Proposition~\ref{prop:inexactmetric}; held-out costs assess the deployed controller. The rich-map student also retains a planner gap (Section~\ref{sec:matchedstorage}), motivating investigation of shared storage constraints and visitation mismatch alongside BG correction error without identifying their contributions. Online Supplement~\ref{sup:armstoragechain} gives construction-node teacher-projection and storage diagnostics.

\subsection{Matched policy-storage comparison}\label{sec:matchedstorage}
Figure~\ref{fig:matchedstorage} summarizes the main BG--student comparison. Students store first actions from finite-budget restricted feedback-MPC solves at DPO-visited beliefs; online MPC instead replans along its own trajectories. Historical distinct-map results and teacher-budget sweeps are in Online Supplement Tables~\ref{tab:feedbackdistillmain} and~\ref{tab:feedbackthruststudents}.

We cross backward BG and student construction with compact/rich common map families. Models, DPO actors, visitation nodes, action bounds/transforms, fitting loss, map-selection rule, and evaluation paths are matched within each task. BG is rebuilt backward with each map's stored future policy; only MPC first actions are distilled, with teacher labels shared between student maps. Thrust/arm validation may select different realized rich features from the common candidate class. This matches storage protocols while retaining different target generators and finite budgets; map changes also alter BG continuations and targets. Historical absolute costs are not paired contrasts with this evaluation.

\begin{table}[!tbp]
\centering\small\setlength{\tabcolsep}{3pt}
\caption{Matched storage: eight-seed plant cost. Thrust uses rebuilt maps and 512 episodes/seed, distinct from Table~\ref{tab:compactthrust}'s 256-episode panel. Arm Eval.~1/2 are archived streams 12/22 on the same seeds. Figure~\ref{fig:matchedstorage} and Online Supplement~\ref{sup:matchedstorage} give paired contrasts.}
\label{tab:matchedstoragefour}
\begin{tabular}{lrrrr}
\toprule
& \multicolumn{2}{c}{Compact map} & \multicolumn{2}{c}{Rich map}\\
Task / condition & BG & Student & BG & Student\\
\midrule
Thrust, mild & 1.629484 & 1.629304 & 1.627984 & 1.627820\\
Thrust, aniso. & 2.086755 & 2.087783 & 2.083432 & 2.084063\\
Arm, Eval. 1 & 6.714405 & 6.737905 & 6.678051 & 6.687144\\
Arm, Eval. 2 & 6.717764 & 6.744071 & 6.688447 & 6.699184\\
Docking, regular & 0.822330 & 0.828730 & 0.818491 & 0.820978\\
Docking, late & 4.025040 & 4.050587 & 4.027685 & 4.059673\\
\bottomrule
\end{tabular}
\end{table}

The rich class reduces the student-minus-BG mean cost gap by 59--61\% in the two arm streams and 61\% in regular docking (Table~\ref{tab:matchedstoragefour}, Figure~\ref{fig:matchedstorage}); all three paired gap-change intervals exclude zero. Rich-map arm still favors BG in cost on one stream and attainment on both; regular docking retains a small BG attainment advantage. Thrust resolves no method difference in either map, without establishing equivalence. Late-noise docking favors BG in both maps, but the map effect remains unresolved. Intervals are exploratory and unadjusted for multiplicity.

Rich-map incremental construction takes BG versus teacher-plus-student fit 9.34/82.78 seconds in arm, 5.09/313.39 in regular docking, and 4.99/298.16 under late noise on the same local GPU. These budget-specific times exclude shared model/actor training, node preparation, evaluation, and export. Thrust reuses teachers from another GPU, precluding a matched total-time ratio. Matched rich-map date-zero native medians are $2.49$--$5.46\,\mu$s over eight seeds/condition, excluding filtering and per-action Python overhead (Online Supplement Table~\ref{tab:matchednative}). Full paired contrasts and timing scopes appear in Online Supplement~\ref{sup:matchedstorage}.

\section{Discussion and conclusion}
BG refines current actions through deployed future feedback and stores the corrections. The nonlinear recursion tracks continuation, derivative, safeguard and representation errors; the controlled non-LQG example demonstrates consistent second-order policy accuracy. Affine LQG provides exact backward recovery.

Thrust BG approaches the tested feedback planner; docking shows the value of future observation response and sensitivity to physical-action fitting. The arm retains a planner gap. Richer matched maps narrow student--BG differences in arm and regular docking; late-noise docking favors BG, while thrust resolves no difference. Scalar DP diagnoses precision lost in regression. 

\paragraph{Scope.}
The tasks use supplied low-dimensional model families, two actions and 12--16 decisions; the arm EKF is approximate. Piecewise-smooth caps motivate Proposition~\ref{prop:inexactmetric}; fixed ridge and cap crossings preclude an implementation-wide rate certificate from Corollary~\ref{cor:nonlinearquadratic}. Finite-node checks certify neither all-state accuracy nor plant transfer. Full rollouts cost $O(NMT^2)$ model steps before derivative/storage overhead; large-model and long-horizon scaling is untested.

\begin{ack}
Jeonggyu Huh acknowledges financial support from the National Research Foundation of Korea (NRF; grant no.~RS-2025-00562904).

\end{ack}

\section*{Declaration of generative AI and AI-assisted technologies}
Under the author's direction, ChatGPT/Codex (OpenAI) and Claude (Anthropic) assisted idea/argument examination and manuscript editing; ChatGPT/Codex also assisted implementation and debugging (Online Supplement~\ref{sup:aidevelopment}). The author retains responsibility for all content.

\onecolumn
\noindent{\Large\bfseries Technical Appendix}\par\medskip

\appendix
\numberwithin{table}{section}
\numberwithin{figure}{section}
This appendix supplies the mathematical references used in the main paper. Section~\ref{app:supporting} proves the local and backward comparisons, Section~\ref{app:theory} develops finite-horizon LQG error propagation, Section~\ref{app:nonlinear} proves the nonlinear stored-policy recursion and its consistency limits, Section~\ref{app:gaussian} states the regularity and belief conditions, and Section~\ref{app:nlregime} tests the local non-LQG accuracy regime. Main-paper equation and result numbers are retained.

\section{Proofs of the main comparisons}\label{app:supporting}
The Gaussian closure proof is in Section~\ref{proof:prop:closure}. Section~\ref{app:derivativealignment} establishes conditional derivative and action-alignment bounds. Section~\ref{app:baselinecomparison} records the baseline-continuation comparison, while Section~\ref{proof:thm:backward} proves the backward comparison and its regression-error bound. The implemented metric safeguards are covered in Section~\ref{app:metricimplementation}; Section~\ref{proof:prop:lqgbackwardexact} proves exact affine LQG backward recovery. Section~\ref{app:transferaccounting} collects model-transfer and action-error accounting.

\subsection{Proof of Proposition~\ref{prop:closure}}\label{proof:prop:closure}
\paragraph{Gaussian filtering and innovations likelihood.}
For the controlled model \eqref{eq:controlledmodel}, substitute the corresponding coefficients evaluated at the known $(C_t,u_t)$ for $A,f_t,L_\theta(C_t),H_t,D_\theta(C_t)$ below; the same conditioning formulas apply.

For completeness, the action-independent core model and its standard filtering calculations are recorded here. Fresh $\xi^X,\nu$ are independent standard Gaussians, $f_t=f_\theta(C_t)$, $H_t=H_\theta(C_t)$, $X_t\mid\F_t\sim\N(m_t,P_t)$, and $L_{Y,t+1}=\chol(\Omega_{t+1})$.
\begin{align}
X_{t+1} &= A X_t+f_\theta(C_t)+L_\theta(C_t)\xi^X_{t+1},
\qquad \xi^X_{t+1}\sim\N(0,I_d), \label{eq:state}\\
Y_{t+1} &= H_\theta(C_t)X_{t+1}
                 +D_\theta(C_t)\nu_{t+1},
\qquad \nu_{t+1}\sim\N(0,I_m), \label{eq:obs}\\
Q_t&=L_\theta(C_t)L_\theta(C_t)^\top,
\qquad R_t=D_\theta(C_t)D_\theta(C_t)^\top\succ0. \label{eq:covs}
\end{align}
\begin{align}
a_{t+1}&=A m_t+f_t,
& P^-_{t+1}&=A P_tA^\top+Q_t, \label{eq:pred}\\
\mu_{t+1}&=H_ta_{t+1},
& \Omega_{t+1}&=H_tP^-_{t+1}H_t^\top+R_t. \label{eq:innovcov}
\end{align}
\begin{align}
K_t^{\mathrm{KF}}&=P^-_{t+1}H_t^\top\Omega_{t+1}^{-1}, \label{eq:kfgain}\\
m_{t+1}&=a_{t+1}+K_t^{\mathrm{KF}}(Y_{t+1}-\mu_{t+1}), \label{eq:meanupdate}\\
P_{t+1}&=P^-_{t+1}-K_t^{\mathrm{KF}}\Omega_{t+1}(K_t^{\mathrm{KF}})^\top. \label{eq:covupdate}
\end{align}
\begin{equation}
p_\theta(Y_{t+1}\mid\F_t,u_t)
=\N(Y_{t+1};\mu_{t+1},\Omega_{t+1}).
\label{eq:predictivedensity}
\end{equation}
\begin{equation}
\mathcal L_{\mathrm{pred}}(\theta)
=\frac12\sum_t\left[
\log\det\Omega_{t+1}
+e_{t+1}^\top\Omega_{t+1}^{-1}e_{t+1}
+m\log(2\pi)\right]
+\lambda_{\mathrm{reg}}\mathcal R(\theta),
\quad e_{t+1}=Y_{t+1}-\mu_{t+1}.
\label{eq:nll}
\end{equation}
\begin{align}
Y_{t+1}&=\mu_{t+1}+L_{Y,t+1}E_{t+1}, \label{eq:predsampler}\\
m_{t+1}&=a_{t+1}+K_t^{\mathrm{KF}}L_{Y,t+1}E_{t+1}. \label{eq:meansampler}
\end{align}
The predictive likelihood differentiates through the filter and recurrent predictor state. The observation score alone need not be the joint likelihood of predictors and observations. The final two equations use fresh $E_{t+1}\sim\N(0,I_m)$; latent-state-dependent scores can instead be evaluated by jointly simulating the latent belief, physical dynamics and observations.

\begin{proof}
Conditional on $\F_t$ and the chosen action, the transition is affine in a Gaussian state with independent Gaussian noise. Hence $(X_{t+1},Y_{t+1})$ is jointly Gaussian. Conditioning on $Y_{t+1}$ gives the displayed mean and covariance. Assumption~\ref{ass:context} ensures that subsequently observing $C_{t+1}$ introduces no omitted likelihood factor. Induction proves the result.
\end{proof}

\subsection{Conditional derivatives and action alignment}\label{app:derivativealignment}
Derivative statements condition on pre-sampling $\mathcal G_t$; performance identities condition on completed policies and fresh evaluation paths.

\begin{proof}[Proof of Proposition~\ref{prop:clderivatives}]
On a smaller neighborhood, the mean-value formula bounds a first difference quotient of $\mathscr C_t^\rho$ by its local gradient envelope. Conditional dominated convergence gives the first interchange in \eqref{eq:directderivatives}. Applying the same argument to the pathwise gradient, using the Hessian envelope, gives the second. The differentiated composition includes the state derivative of every future continuation action, although model and policy parameters are fixed. The common innovation law is independent of the differentiated argument; all action-dependent conditional laws enter through the simulator.
\end{proof}

For a vector $D(\omega)$ containing the selected gradient and Hessian entries at a fixed $(t,s,u)$, suppose $\E[\norm{D}^2\mid\mathcal G_t]<\infty$. Fresh conditionally independent rollout units give
\begin{equation}
\E[\norm{\widehat d_M-d}^2\mid\mathcal G_t]=\frac{\E[\norm{D-d}^2\mid\mathcal G_t]}{M},
\qquad d=\E[D\mid\mathcal G_t],\quad \widehat d_M=M^{-1}\sum_{i=1}^M D_i.
\label{eq:derivativemse}
\end{equation}
Conditional centering removes cross terms. For antithetic simulation, $M$ counts independent pair averages. This pointwise identity for the declared discrete model implies neither a uniform deployment bound nor unbiasedness after inversion, flooring, clipping, or acceptance. In particular, unbiased derivative estimates need not give an unbiased Newton step or justify evaluation at a data-dependent action.

\begin{proof}[Proof of Proposition~\ref{prop:alignment}]
The minimizing variational inequality gives $n_\star=-\nabla q_\star(u_\theta^\star)\in N_\U(u_\theta^\star)$. The normal cone is monotone. For $e=v-u_\theta^\star$ and any $n\in N_\U(v)$, strong convexity implies
\[
\mu\norm{e}^2\leq
\bigl(\nabla q_\star(v)-\nabla q_\star(u_\theta^\star)+n-n_\star\bigr)^\top e
=\bigl(\nabla q_\star(v)+n\bigr)^\top e.
\]
Add and subtract $\nabla q_\rho(v)$, apply Cauchy--Schwarz, and minimize over $n$. If $e=0$ the bound is immediate; otherwise divide by $\norm{e}$. For \eqref{eq:continuationgradientgap}, subtract the two action values: their stage costs cancel and their transition laws coincide. Differentiating the remaining expectation gives the displayed identity. This proof also covers any feasible action satisfying selected-tail stationarity, even when $q_\rho$ is nonconvex.
\end{proof}
Put $\Delta V=V_{t+1}^\rho-V_{t+1}^{\theta,\star}$ and $F(u,\omega)=\mathcal T_{\theta,t}(s,u,\omega)$. When differentiation under the expectation is justified,
\begin{equation}
 \nabla q_\rho(v)-\nabla q_\star(v)
 =\E[F_u(v,\omega)^\top\nabla\Delta V(F(v,\omega))].
 \label{eq:continuationgradientgap}
\end{equation}
This follows by differentiating the common-transition expectation of $\Delta V$. Exact minimization of a selected-tail action value need not remove this discrepancy, and backward refresh need not decrease it monotonically.

\paragraph{What a sampled, safeguarded Newton step leaves unresolved.}
At the fixed state, let $g=\nabla q_\rho(u^0)$ and $B=\nabla^2q_\rho(u^0)$. Suppose the Hessian is $L$-Lipschitz on a neighborhood of the segment from $u^0$ to the feasible returned action $v=u^0+h$. Let $\widehat g,\widehat B$ be the derivative estimates and $\widetilde B$ the matrix actually used after symmetrization and regularization. Put
\begin{align}
\epsilon_g&=\norm{\widehat g-g},\qquad
\epsilon_B=\norm{\widehat B-B}_{\rm op},\qquad
\epsilon_{\rm reg}=\norm{\widetilde B-\widehat B}_{\rm op},\nonumber\\
\zeta(v)&=\operatorname{dist}\bigl(0,\widehat g+\widetilde B h+N_\U(v)\bigr).
\end{align}
The integral Taylor remainder gives the deterministic bound
\begin{equation}
r_\rho(v)\leq\epsilon_g+(\epsilon_B+\epsilon_{\rm reg})\norm{h}
+\zeta(v)+\tfrac L2\norm{h}^2.
\label{eq:newtonresidualbudget}
\end{equation}
The Taylor remainder satisfies $\norm{\nabla q_\rho(v)-g-Bh}\leq L\norm{h}^2/2$; adding estimation and regularization errors and minimizing over the normal cone gives the bound. An exact QP solve on the original feasible set has $\zeta=0$, but damping, clipping, or an added trust constraint can leave a nonzero original-action residual. All terms vanish for an unconstrained quadratic with exact unmodified derivatives and solve. Combining \eqref{eq:newtonresidualbudget} and \eqref{eq:alignmentbound} gives the alignment budget under these error and smoothness conditions.

\subsection{Baseline continuation and descent}\label{app:baselinecomparison}\label{proof:prop:descent}\label{proof:thm:improve}\label{proof:cor:approx}

\begin{proposition}[The exact local direction is a descent direction]\label{prop:descent}
Fix an admissible continuation $\rho$ and take $g_t=\nabla_u\Qfun_t^\rho(s,u^0)$. In \eqref{eq:bgqp}, allow $\widetilde B_t$ to be any symmetric positive-definite matrix; it need not equal or approximate the Hessian. Suppose $\U_t(s)$ is convex, contains $u^0$, and the quadratic minimizer exists. If $d_t\ne0$, then $g_t^\top d_t<0$. If $\Qfun_t^\rho(s,\cdot)$ is continuously differentiable near the feasible segment, sufficiently small positive steps satisfy \eqref{eq:armijo} for that same continuation.
\end{proposition}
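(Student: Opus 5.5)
The plan is to compare the quadratic model at its minimizer $d_t$ with its value at the feasible point $d=0$. Write $m(d)=g_t^\top d+\tfrac12 d^\top\widetilde B_t d$. Because $u^0\in\U_t(s)$, the zero step is feasible, so optimality of $d_t$ gives $m(d_t)\le m(0)=0$, that is,
\[
g_t^\top d_t\le-\tfrac12 d_t^\top\widetilde B_t d_t.
\]
Since $\widetilde B_t\succ0$ and $d_t\ne0$, the right side is strictly negative, so $g_t^\top d_t<0$. This step uses only positive definiteness of $\widetilde B_t$. It needs neither convexity of $\U_t(s)$ nor any relation between $\widetilde B_t$ and the Hessian. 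Existence of the minimizer is assumed in the statement.

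For the Armijo part, convexity of $\U_t(s)$ together with $u^0,\,u^0+d_t\in\U_t(s)$ puts the whole segment $u^0+\alpha d_t$, $\alpha\in[0,1]$, in the feasible set. Put $\varphi(\alpha)=\Qfun_t^\rho(s,u^0+\alpha d_t)$. Continuous differentiability of $\Qfun_t^\rho(s,\cdot)$ near the segment makes $\varphi$ continuously differentiable near $0$, with $\varphi'(0)=g_t^\top d_t<0$. The mean value theorem then gives, for small $\alpha>0$,
\[
\varphi(\alpha)-\varphi(0)=\alpha\,\varphi'(\xi_\alpha),\qquad \xi_\alpha\in(0,\alpha).
\]
Because $\varphi'$ is continuous and $c<1$, there is $\bar\alpha>0$ with $\varphi'(\xi)\le c\,\varphi'(0)$ for all $\xi\in[0,\bar\alpha]$. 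For $\alpha\in(0,\bar\alpha]$ this yields $\varphi(\alpha)\le\varphi(0)+c\alpha g_t^\top d_t$, which is exactly \eqref{eq:armijo} for the same continuation $\rho$.

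The only point needing care is that $g_t$ is the exact gradient of $\Qfun_t^\rho$ for the fixed continuation. The pathwise representation of Proposition~\ref{prop:clderivatives} is not needed here, because only differentiability of the population action value is used. I expect no real obstacle. The one subtlety is to check that differentiability holds on a one-sided neighborhood in the feasible direction when $u^0$ lies on the boundary, and the hypothesis of differentiability on a neighborhood of the segment covers this.
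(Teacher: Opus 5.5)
Your proposal is correct and matches the paper's proof. Like the paper, you compare the quadratic model at $d_t$ with the feasible zero step to get $g_t^\top d_t\le-\tfrac12 d_t^\top\widetilde B_t d_t<0$, then use convexity of $\U_t(s)$ and a first-order expansion along the feasible segment to obtain \eqref{eq:armijo} for small $\alpha$; your mean-value and continuity argument just spells out the step the paper states in one sentence.
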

\begin{proof}
The zero correction is feasible. Optimality therefore gives
$g_t^\top d_t+\tfrac12d_t^\top\widetilde B_t d_t\leq0$.
Positive definiteness makes $g_t^\top d_t$ strictly negative when $d_t\ne0$. The first-order expansion along the feasible segment proves the Armijo statement.
\end{proof}

\begin{theorem}[Policy comparison using the baseline continuation]\label{thm:improve}
Assume the finite-horizon costs are integrable and the model and state construction are fixed. Define
\begin{equation}
\Adv_t^0(s,u)=\Qfun_t^0(s,u)-V_t^0(s).
\label{eq:advantage}
\end{equation}
For an admissible feedback policy $\pi^1$, deterministic or randomized with fresh internal randomness independent of future transition noise conditional on the current state, let $U_t\sim\pi_t^1(\cdot\mid s_t)$. Then
\begin{equation}
J_\theta(\pi^1;s_0)-J_\theta(\pi^0;s_0)
=\E_\theta^{\pi^1}\left[\sum_{t=0}^{T-1}
\Adv_t^0(s_t,U_t)\right].
\label{eq:performance}
\end{equation}
Consequently, if the exact acceptance test ensures
$\E_{U\sim\pi_t^1(\cdot\mid s)}[\Adv_t^0(s,U)]\leq0$ at every relevant state, then
$J_\theta(\pi^1;s_0)\leq J_\theta(\pi^0;s_0)$.
\end{theorem}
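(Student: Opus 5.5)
This is the finite-horizon performance-difference lemma. I would prove it by a telescoping argument over hybrid policies, mirroring the proof of Theorem~\ref{thm:backward} but with the roles reversed. Here the \emph{new} policy runs first and the baseline continuation is appended. For $t=0,\ldots,T$ define
\[
\sigma^{[t]}=(\pi_0^1,\ldots,\pi_{t-1}^1,\pi_t^0,\ldots,\pi_{T-1}^0),
\]
so that $\sigma^{[0]}=\pi^0$ and $\sigma^{[T]}=\pi^1$. Then
\[
J_\theta(\pi^1;s_0)-J_\theta(\pi^0;s_0)=\sum_{t=0}^{T-1}\bigl[J_\theta(\sigma^{[t+1]};s_0)-J_\theta(\sigma^{[t]};s_0)\bigr],
\]
and every hybrid cost is finite by the integrability assumption.

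Next I would compute each adjacent difference. The policies $\sigma^{[t]}$ and $\sigma^{[t+1]}$ share the prefix $\pi^1_{0:t-1}$. Hence the law of $(s_0,\ldots,s_t)$ and the accumulated stage costs before date $t$ coincide, and both are the $\pi^1$-laws. Condition on $s_t$.
\begin{itemize}
\item Under $\sigma^{[t]}$, the conditional cost-to-go from $t$ is $V_t^0(s_t)=\Qfun_t^0(s_t,\pi_t^0(s_t))$.
\item Under $\sigma^{[t+1]}$, the action $U_t\sim\pi^1_t(\cdot\mid s_t)$ is applied, after which the baseline tail runs. Its conditional cost-to-go is therefore $\E[\ell_t(s_t,U_t,\omega_{t+1})+V_{t+1}^0(s_{t+1})\mid s_t,U_t]=\Qfun_t^0(s_t,U_t)$.
\end{itemize}
This step uses two facts: $\omega_{t+1}$ has a state-independent base law independent of the internal randomization of $U_t$ given $s_t$, and the Markov property of $\mathcal T_{\theta,t}$. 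By the tower property the difference equals $\E_\theta^{\pi^1}[\Qfun_t^0(s_t,U_t)-V_t^0(s_t)]=\E_\theta^{\pi^1}\Adv_t^0(s_t,U_t)$. Summing over $t$ gives \eqref{eq:performance}.

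For the consequence, condition on $s_t$ inside each summand: $\E[\Adv_t^0(s_t,U_t)\mid s_t]=\E_{U\sim\pi^1_t(\cdot\mid s_t)}\Adv_t^0(s_t,U)\le0$. This holds at every relevant state, meaning almost surely under the $\pi^1$-visitation law. Each term is thus nonpositive, and the cost inequality follows.

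The main obstacle is technical rather than conceptual. I must justify that $V^0_{t+1}$, evaluated at the random next state, is the correct conditional continuation cost under the hybrid, and that all conditional expectations are integrable. I would handle this by the Markov structure of the belief transition together with the integrability of the hybrid costs, which makes $\Adv_t^0(s_t,U_t)$ integrable as a difference of integrable conditional costs. The randomized case additionally needs the independence of the internal randomness from future noise given $s_t$, which is exactly the hypothesis stated.
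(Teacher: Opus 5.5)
Your proposal is correct and is essentially the paper's argument. The paper telescopes $\ell_t(s_t,U_t,\omega_{t+1})+V_{t+1}^0(s_{t+1})-V_t^0(s_t)$ directly along $\pi^1$-trajectories, using $V_T^0=\Phi$ and $V_0^0(s_0)=J_\theta(\pi^0;s_0)$, and your hybrid costs $J_\theta(\sigma^{[t]};s_0)=\E_\theta^{\pi^1}\bigl[\sum_{k<t}\ell_k+V_t^0(s_t)\bigr]$ are exactly its partial sums, so the two computations coincide term by term (the integrability you invoke for the hybrids is the same integrability of $V_t^0(s_t)$ under the $\pi^1$ law that the paper's version implicitly requires).
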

\begin{proof}
Under $\pi^1$, the conditional expectation of
$\ell_t(s_t,U_t,\omega_{t+1})+V_{t+1}^0(s_{t+1})-V_t^0(s_t)$
given $(s_t,U_t)$ is $\Adv_t^0(s_t,U_t)$. Summing and taking expectations telescopes the value terms. Since $V_T^0=\Phi$ and $V_0^0(s_0)=J_\theta(\pi^0;s_0)$, the identity follows. The sign condition gives the inequality.
\end{proof}

Online correction with independent simulation noise is randomized. For offline policies, condition on the completed construction $\mathcal C_{\rm build}$ and use fresh evaluation trajectories; an outer construction expectation is then permitted under integrability. This differs from the pre-sampling $\mathcal G_t$ conditioning in Proposition~\ref{prop:clderivatives}. Equation~\eqref{eq:performance} is the finite-horizon performance-difference identity \citep{kakade2002}. It also holds for backward-constructed $\bar\pi$, but corrected-tail acceptance does not imply its $\Qfun^0$ sign condition. The main paper's Theorem~\ref{thm:backward} addresses that continuation directly.

\begin{corollary}[Approximate action-value comparisons]\label{cor:approx}
Suppose an approximation $\widehat\Qfun_t^0$ has absolute error at most $\varepsilon_t(s)$ at both $u^0$ and an accepted action $u^1$. If
\begin{equation}
\widehat\Qfun_t^0(s,u^1)-\widehat\Qfun_t^0(s,u^0)
\leq-\delta_t(s),
\label{eq:approxaccept}
\end{equation}
let $a_t\in\{0,1\}$ indicate acceptance, and return $u^0$ otherwise. With errors controlled at each realized accepted pair, the deployed policy satisfies
\begin{equation}
J_\theta(\pi^1;s_0)-J_\theta(\pi^0;s_0)
\leq\E_\theta^{\pi^1}\sum_{t=0}^{T-1}
a_t\bigl[2\varepsilon_t(s_t)-\delta_t(s_t)\bigr].
\label{eq:approxbound}
\end{equation}
\end{corollary}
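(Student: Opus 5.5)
The plan is to reduce Corollary~\ref{cor:approx} to the performance-difference identity \eqref{eq:performance} of Theorem~\ref{thm:improve} and then bound each per-date advantage pointwise. First, condition on the completed construction so that $\pi^1$ is a fixed, possibly randomized, feedback policy whose internal randomness (the acceptance indicator $a_t$ and the proposal $u^1$) is independent of future transition noise given $s_t$. Theorem~\ref{thm:improve} then gives
\[
J_\theta(\pi^1;s_0)-J_\theta(\pi^0;s_0)=\E_\theta^{\pi^1}\sum_{t=0}^{T-1}\Adv_t^0(s_t,U_t),
\]
with $U_t=u^1$ if $a_t=1$ and $U_t=u^0=\pi_t^0(s_t)$ otherwise. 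Integrability of the hybrid terms is assumed, as in that theorem.

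Second, bound $\Adv_t^0(s,U)$ case by case. On rejection, $U=\pi_t^0(s)$ and $V_t^0(s)=\Qfun_t^0(s,\pi_t^0(s))$, so the advantage is exactly zero, which matches $a_t[2\varepsilon_t-\delta_t]=0$. On acceptance, split
\[
\Adv_t^0(s,u^1)=\bigl[\Qfun_t^0(s,u^1)-\widehat\Qfun_t^0(s,u^1)\bigr]+\bigl[\widehat\Qfun_t^0(s,u^1)-\widehat\Qfun_t^0(s,u^0)\bigr]+\bigl[\widehat\Qfun_t^0(s,u^0)-\Qfun_t^0(s,u^0)\bigr].
\]
The outer brackets are each at most $\varepsilon_t(s)$ by the absolute-error hypothesis at both actions. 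The middle bracket is at most $-\delta_t(s)$ by \eqref{eq:approxaccept}. Hence $\Adv_t^0\le 2\varepsilon_t-\delta_t$, and together the two cases give $\Adv_t^0(s_t,U_t)\le a_t[2\varepsilon_t(s_t)-\delta_t(s_t)]$ almost surely under the $\pi^1$ state law. Taking expectations and summing over $t$ gives \eqref{eq:approxbound}.

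The main obstacle is measurability and conditioning. $\widehat\Qfun_t^0$ is typically a Monte Carlo estimate built from rollouts that also determine $a_t$ and possibly $u^1$. The error bound must therefore hold at the realized accepted pair, not at a fixed action chosen beforehand. I would handle this by treating the construction randomness as part of the policy's internal randomization, which is independent of future plant or model noise given $s_t$. The pointwise inequality is then applied inside the conditional expectation given $(s_t,U_t,a_t)$. I would also verify that the expectation in \eqref{eq:performance}, conditioned on $(s_t,U_t)$, is unaffected by conditioning additionally on $a_t$. This holds because $a_t$ is a function of $s_t$ and internal randomness independent of $\omega_{t+1}$.
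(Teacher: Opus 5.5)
Your proposal is correct and matches the paper's argument. You bound the per-date baseline advantage pathwise by $a_t[2\varepsilon_t-\delta_t]$: it is zero on rejection, and on acceptance it is at most the two approximation errors plus the acceptance inequality. You then substitute this into the performance-difference identity \eqref{eq:performance}, treating construction randomness as the policy's internal randomness, exactly as the paper does. Your final check about additionally conditioning on $a_t$ is harmless but unnecessary, since the inequality holds realization by realization and can be integrated directly.
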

\begin{proof}
At the two accepted actions, the approximation errors add at most $2\varepsilon_t(s)$ to the estimated difference. A rejected proposal returns the baseline and contributes exactly zero advantage. Substitute into \eqref{eq:performance}, including internal randomness when present.
\end{proof}

At the implemented zero margin $\delta_t=0$, this bounds deterioration by $\E^{\pi^1}\sum_t2a_t\varepsilon_t$, without guaranteeing improvement. A finite acceptance sample alone does not control errors along every state visited by the candidate.

\subsection{Backward comparison and regression error}\label{proof:thm:backward}\label{proof:cor:backwarddefect}
\begin{proof}[Proof of Theorem~\ref{thm:backward}]
The policies $\pi^{[t]}$ and $\pi^{[t+1]}$ have the same distribution of $s_t$, generated by the common baseline prefix. Conditional on $s_t$, their costs up to $t-1$ cancel, and their remaining expected-cost difference is $d_t^{\mathrm{back}}(s_t)$ because both use the same corrected tail after date $t$. Hence
\[
 J_\theta(\pi^{[t]};s_0)-J_\theta(\pi^{[t+1]};s_0)
 =\E_\theta^{\pi^0}[d_t^{\mathrm{back}}(s_t)].
\]
Sum over $t=0,\ldots,T-1$. The policy costs telescope to $J_\theta(\bar\pi;s_0)-J_\theta(\pi^0;s_0)$, proving \eqref{eq:backwardperformance}. The sign condition and pointwise upper bounds give the stated inequalities.
\end{proof}

\begin{proof}[Proof of Corollary~\ref{cor:backwarddefect}]
At an accepted target, adding the two approximation errors bounds
$\bar\Qfun_t(s,a_t(s))-\bar\Qfun_t(s,\pi_t^0(s))$ by
$2\varepsilon_t(s)-\eta_t(s)$. At a rejected target that difference is zero.
The remaining change from $a_t(s)$ to $\bar\pi_t(s)$ is at most
$L_t^{Q}(s)e_t^{\mathrm{int}}(s)$; add these bounds.
\end{proof}

\subsection{Inexact metrics and implemented safeguards}\label{app:metricimplementation}
\begin{proof}[Proof of Proposition~\ref{prop:inexactmetric}]
Fix a realized gradient estimate, metric and proposal. The definition of $\xi$ and Cauchy--Schwarz give
\[
 \nabla q(u^0)^\top d
 \leq\widehat g^\top d+\delta\|d\|
 \leq-\tfrac12d^\top\mathsf M d+\xi+\delta\|d\|.
\]
Integrating the gradient along the segment yields the descent-lemma bound
$q(u^0+\alpha d)-q(u^0)\leq
\alpha\nabla q(u^0)^\top d+
\tfrac12L_\nabla\alpha^2\|d\|^2$.
Substitution proves \eqref{eq:metricbudget}. If
$c_d=\tfrac12d^\top\mathsf M d-\xi-\delta\|d\|>0$,
any feasible $0<\alpha\leq1$ with
$L_\nabla\alpha\|d\|^2<2c_d$ gives strict decrease.
For $L_\nabla=0$, every $0<\alpha\leq1$ does.
\end{proof}

This is a realized-error bound. The gradient estimate and metric may use the same noise and need not be independent. Unbiasedness of $\widehat g$ alone does not establish the bound $\delta$, and gradient interchange still needs its own assumptions. Positive definiteness of a floored branchwise matrix supplies a metric, not a bound on population-curvature error. The proposition uses only a Lipschitz population gradient; this regularity is weaker than the pathwise $C^2$ interchange hypotheses but is not automatic for a capped continuation.

\paragraph{Which implemented steps have zero residual?}
For an exact constrained quadratic solve with $0$ feasible, comparison with $0$ gives $\widehat g^\top d+\tfrac12d^\top\mathsf M d\leq0$. More generally, a quadratic objective within $\epsilon_{\rm solve}\geq0$ of its feasible minimum gives $\xi\leq\epsilon_{\rm solve}$. For an unconstrained step $v=-\mathsf M^{-1}\widehat g$ and radial shortening $d=\gamma v$, $0\leq\gamma\leq1$,
\begin{equation}
 \widehat g^\top d+\tfrac12d^\top\mathsf M d
 =-\gamma(1-\gamma/2)v^\top\mathsf M v\leq0.
 \label{eq:radialmetricdecrease}
\end{equation}
Thus thrust's eigenvalue floor and radial shortening give $\xi=0$ in exact arithmetic, even when the input curvature is only branchwise. Linear-solve error can be included by evaluating \eqref{eq:metricresidual} at the returned step.

The arm additionally projects onto its action box after radial shortening. The residual must then be evaluated using the displacement \emph{after} that projection. Euclidean projection does not in general preserve descent in an anisotropic metric. For example, take $\widehat g=(1,1)^\top$, $\mathsf M=\left(\begin{smallmatrix}1&2\\2&5\end{smallmatrix}\right)\succ0$, and $u^0=(-2,0)^\top$ in $[-2,2]^2$. Then $v=(-3,1)^\top$ and $\widehat g^\top v=-2$, but projecting $u^0+\gamma v$ onto the box for $0<\gamma\leq1$ returns displacement $(0,\gamma)^\top$ with positive gradient product. Equation~\eqref{eq:metricresidual} accounts for such a change; it does not assume it is small. Subsequent target regression and the deployed cap are covered separately by $e_t^{\rm int}$. The stored experimental diagnostics do not certify $\delta$, $L_\nabla$, or the final projected-step residual over the baseline state laws.

\paragraph{Acceptance and backward comparison.}
\begin{equation}
 d_t^{\rm back}\le I_t\min\{b_t^{\rm met},2\varepsilon_t-\eta_t\}
 +L_t^{Q}e_t^{\rm int}.
 \label{eq:metricbackwardbudget}
\end{equation}

For an accepted target, Proposition~\ref{prop:inexactmetric} bounds its true action-value change by $b_t^{\rm met}$, while the independent comparison error bound gives $2\varepsilon_t-\eta_t$. Both refer to the same actual corrected continuation and the same accepted action, so their minimum is also an upper bound. Rejection returns the baseline and contributes zero. Adding the action-value change between the target and the deployed regression proves \eqref{eq:metricbackwardbudget}; Theorem~\ref{thm:backward} then sums it under baseline visitation.
These statements condition on completed construction and require their bounds at the realized proposals, almost surely under the indicated state laws. Independent acceptance samples avoid reusing derivative noise, but do not provide these error bounds by themselves. A negative integrated bound certifies fitted-model improvement; a nonnegative bound only limits possible deterioration. Plant transfer still requires \eqref{eq:transfer}.

\subsection{Proof of Proposition~\ref{prop:lqgbackwardexact}}\label{proof:prop:lqgbackwardexact}
\begin{proof}
Under the fixed context schedule, $P_t$ is deterministic and independent of the action. The belief mean has the form
$m_{t+1}=Am_t+Bu_t+b_t+\xi_{t+1}$, with centered Gaussian $\xi_{t+1}$ whose covariance is action independent. Conditional stage cost is
$\tfrac12m^\top C_t^xm+\tfrac12u^\top C_t^uu+\tfrac12\tr(C_t^xP_t)$.
At the terminal date $S_T=C_T^x$, $q_T=0$, and $c_T=\tfrac12\tr(C_T^xP_T)$. Suppose inductively that the constructed continuation is optimal and has value
$\bar V_{t+1}(m)=\tfrac12m^\top S_{t+1}m+q_{t+1}^\top m+c_{t+1}$, with $S_{t+1}\succeq0$.
Gaussian expectation then gives
\begin{align}
 \bar\Qfun_t(m,u)
 &=\tfrac12u^\top G_tu+
 u^\top B^\top\{S_{t+1}(Am+b_t)+q_{t+1}\}+d_t(m),
 \nonumber\\
 G_t&=C_t^u+B^\top S_{t+1}B\succ0,
 \label{eq:lqgbackwardquadratic}
\end{align}
where $d_t$ is independent of $u$. Thus \eqref{eq:lqgbackwardnewton} cancels its starting action and equals the unique minimizer $-K_tm-k_t$. The target is affine. Its coefficients give zero least-squares residual, and full column rank makes them the unique fitted coefficients. Exact fitting therefore reproduces the minimizer for every $m$, not just at the nodes. Its cost is no greater than that of $u_t^0$, so the stated exact acceptance test preserves it. Substitution into the action value gives the Riccati recursion and a quadratic value at date $t$; its state Hessian $S_t$ is positive semidefinite by the Schur complement of the joint state--action quadratic. Induction from $T-1$ to zero completes the proof.
\end{proof}
Pooling context schedules requires a policy representation containing their optimal coefficients and a full-rank pooled design; a truncated feature basis need not suffice. Active sensing, action bounds and nonquadratic costs generally remove the closed-form LQG reference. Small action error on a finite evaluation panel does not establish the coefficient-error hypothesis of Theorem~\ref{thm:lqgrefinement}.

\subsection{Model-transfer and action-error accounting}\label{app:transferaccounting}
\paragraph{Performance transfer under plant--model mismatch.}
Let $J_\star(\pi)$ denote the cost of the same observation-adapted policy in the plant. If, for the two policies under comparison,
$|J_\star(\pi^j)-J_\theta(\pi^j)|\leq e_j$, then elementary addition and subtraction gives
\begin{equation}
J_\star(\pi^1)-J_\star(\pi^0)
\leq J_\theta(\pi^1)-J_\theta(\pi^0)+e_1+e_0.
\label{eq:transfer}
\end{equation}
The bound guarantees transfer when nominal improvement exceeds $e_0+e_1$. These require separate bounds; fitted posterior covariance does not measure parameter uncertainty.

For the backward controller, take $\pi^1=\bar\pi$ and apply Corollary~\ref{cor:backwarddefect}. Under its assumptions and full-policy transfer bounds $e_{\bar\pi},e_{\pi^0}$,
\begin{equation}
 J_\star(\bar\pi)-J_\star(\pi^0)
 \leq\E_\theta^{\pi^0}\sum_{t<T}
 \bigl[I_t(2\varepsilon_t-\eta_t)+L_t^{Q}e_t^{\mathrm{int}}\bigr]
 +e_{\bar\pi}+e_{\pi^0}.
 \label{eq:backwardtransfer}
\end{equation}
Transfer bounds must cover the complete corrected controller and its fitted filter in the plant. A one-action probe followed by DPO is insufficient. Derivatives, acceptance, interpolation, and comparison must use the same corrected tail.

\paragraph{Action errors on common histories.}
Let $h_t$ denote the observed history and let $s_t^\theta(h_t)$ be the controller's fitted-model information state. In the quadratic companion, define the action errors on a specified distribution $\rho$ of history--time pairs as
\begin{align}
\mathcal E_{\theta}(\pi;\rho)&=\E_{\rho}\norm{u_t^\pi(h_t)-u_t^{\theta,\star}(h_t)}^2,\\
\mathcal E_{\star}(\pi;\rho)&=\E_{\rho}\norm{u_t^\pi(h_t)-u_t^{\star}(h_t)}^2.
\label{eq:policyerrors}
\end{align}
The first measures distance from the fitted model's own optimal decision; the second includes disagreement between the fitted model and plant and their conditional beliefs. The plant oracle is evaluator-only. These are action-space errors, not Kalman-gain errors. The squared errors are not additive: with $a=u^\pi-u^{\theta,\star}$ and $b=u^{\theta,\star}-u^\star$,
\begin{equation}
\mathcal E_{\star}(\pi;\rho)=\mathcal E_{\theta}(\pi;\rho)+\E_\rho\norm{b}^2+2\E_\rho[a^\top b].
\label{eq:erroridentity}
\end{equation}
Hence fitted-model-to-plant discrepancy is not a lower bound for every policy. Greedification under a suboptimal frozen tail also differs from full-horizon optimal control.

\section{Finite-horizon error propagation and LQG structure}\label{app:theory}
Section~\ref{app:theoryproofs} proves the frozen-continuation $O(\epsilon^2)$ coefficient bound. Section~\ref{app:refreshedpropagation} tracks local solve and representation errors through the actual corrected affine tail. The Riccati reference in Section~\ref{app:riccatireference} fixes the optimal coefficients, and Section~\ref{app:affineextensions} accounts for numerical deployment errors. These results retain the fixed fitted model and finite horizon.

\subsection{Finite-horizon affine refinement}\label{app:theoryproofs}
In the fitted LQG model, fix the initial covariance and the deterministic context schedule. Gaussian filtering yields a deterministic covariance sequence and the innovation representation
\[
m_{t+1}=Am_t+Bu_t+b_t+\eta_{t+1},\qquad
\E[\eta_{t+1}\mid\F_t,u_t]=0,\quad
\operatorname{Cov}(\eta_{t+1}\mid\F_t,u_t)=\Xi_t,
\]
where the Gaussian innovation law is independent of state and action. The conditional stage cost is $\tfrac12m^\top C_t^xm+\tfrac12u^\top C_t^uu+q_t^{\rm cov}$, where $q_t^{\rm cov}=\tfrac12\tr(C_t^xP_t)$; the terminal conditional cost has the analogous variance term. These statements concern the fitted model's own conditional law.

Set
\begin{equation}
\bar A_t=\begin{pmatrix}A&b_t\\0&1\end{pmatrix},\quad
\bar B=\begin{pmatrix}B\\0\end{pmatrix},\quad
\bar C_t=\diag(C_t^x,0),\quad
\bar W_t=\diag(\Xi_t,0),\quad R_t^u=C_t^u.
\label{eq:lqgaugmentation}
\end{equation}
For $u=-L_tz$, put $A_t^L=\bar A_t-\bar B L_t$. A canonical augmented quadratic matrix $\mathsf S_t^L$ and separate scalar $c_t^L$ satisfy
\begin{align}
\mathsf S_T^L&=\bar C_T,\quad c_T^L=q_T^{\rm cov},\nonumber\\
\mathsf S_t^L&=\bar C_t+L_t^\top R_t^u L_t+(A_t^L)^\top\mathsf S_{t+1}^L A_t^L,\nonumber\\
c_t^L&=q_t^{\rm cov}+c_{t+1}^L+\tfrac12\tr(\mathsf S_{t+1}^L\bar W_t),\qquad
V_t^L(m)=\tfrac12z^\top\mathsf S_t^Lz+c_t^L.
\label{eq:affineevaluation}
\end{align}
Thus $\mathsf S_t^L\succeq0$. Its final row and column include deterministic affine costs; accumulated noise costs remain in $c_t^L$. This convention avoids folding the noise constants into the matrix recurrence. The optimal and baseline-continuation greedy coefficients are
\begin{equation}
G_t^i=R_t^u+\bar B^\top\mathsf S_{t+1}^i\bar B,\qquad
L_t^\star=(G_t^\star)^{-1}\bar B^\top\mathsf S_{t+1}^\star\bar A_t,\qquad
L_t^+=(G_t^0)^{-1}\bar B^\top\mathsf S_{t+1}^0\bar A_t.
\label{eq:affinegreedy}
\end{equation}
The optimal recursion agrees with \eqref{eq:lqgrecursion}; $\mathsf S$ is the augmented matrix, distinct from its physical-state block $S$ there.

\begin{theorem}[Frozen-continuation refinement of an affine LQG policy]\label{thm:lqgrefinement}
Fix a fitted model satisfying \eqref{eq:lqgmodel}--\eqref{eq:lqgcost}, a finite horizon, a known deterministic context schedule, and an initial belief covariance. Actions are unrestricted, all coefficients and noise covariances are finite, and $C_t^u\succeq rI$ for some $r>0$. Write $z=(m^\top,1)^\top$, $u_t^0(m)=-L_t^0z$, and $u_t^{\theta,\star}(m)=-L_t^\star z$. Evaluate the affine baseline exactly once and let $u_t^+(m)=\argmin_u\Qfun_t^0(m,u)=-L_t^+z$. For every fixed radius $\delta>0$, the finite constant $C_\delta$ in \eqref{eq:lqgrefinementconstant} satisfies
\begin{equation}
\epsilon=\max_{t<T}\norm{L_t^0-L_t^\star}_{\rm op}\leq\delta
\quad\Longrightarrow\quad
\max_{t<T}\norm{L_t^+-L_t^\star}_{\rm op}\leq C_\delta\epsilon^2.
\label{eq:lqgrefinement}
\end{equation}
For any fixed distribution $\rho$ of history--time pairs with
$M_\rho=\E_\rho\norm{(m_t^\theta(h)^\top,1)^\top}^2<\infty$, evaluating both actions at the same fitted belief gives
\begin{equation}
\left(\E_\rho\norm{u_t^+(h)-u_t^{\theta,\star}(h)}^2\right)^{1/2}
\leq\sqrt{M_\rho}\,C_\delta\epsilon^2.
\label{eq:lqgrefinementrho}
\end{equation}
\end{theorem}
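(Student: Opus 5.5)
The plan is the classical Kleinman--Hewer argument: policy evaluation followed by greedification is a Newton step on the Riccati map, so the coefficient error contracts quadratically. I will work entirely in the augmented coordinates \eqref{eq:lqgaugmentation}.

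\emph{Step 1: reduce the error to a value-matrix discrepancy.} The comparison identity is
\[
L_t^+-L_t^\star=(G_t^0)^{-1}\bigl[\bar B^\top\Delta_{t+1}\bar A_t-\bar B^\top\Delta_{t+1}\bar B\,L_t^\star\bigr],
\qquad \Delta_{t+1}=\mathsf S_{t+1}^0-\mathsf S_{t+1}^\star .
\]
It follows by writing $G_t^0L_t^+-G_t^0L_t^\star$ and using $G_t^\star L_t^\star=\bar B^\top\mathsf S_{t+1}^\star\bar A_t$. Since $G_t^0\succeq C_t^u\succeq rI$, we have $\norm{(G_t^0)^{-1}}\le 1/r$. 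This gives
\[
\norm{L_t^+-L_t^\star}\le r^{-1}\norm{\bar B}\,\norm{\Delta_{t+1}}\bigl(\norm{\bar A_t}+\norm{\bar B}\norm{L_t^\star}\bigr).
\]
It therefore suffices to show $\norm{\Delta_{t+1}}_{\rm op}\le c_\delta\epsilon^2$ for every $t$.

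\emph{Step 2: quadratic value discrepancy.} This is the main step. Subtract the evaluation recursion \eqref{eq:affineevaluation} for $L^0$ from the one for $L^\star$. Set $E_t=L_t^0-L_t^\star$ and substitute $A_t^{L^0}=A_t^{L^\star}-\bar BE_t$. The cross terms linear in $E_t$ assemble into
\[
E_t^\top\bigl(R_t^uL_t^\star-\bar B^\top\mathsf S_{t+1}^\star A_t^{L^\star}\bigr)+\text{(transpose)}.
\]
This combination vanishes because $G_t^\star L_t^\star=\bar B^\top\mathsf S_{t+1}^\star\bar A_t$; it is the matrix form of optimal-action stationarity. What remains is the recursion
\[
\Delta_t=E_t^\top G_t^\star E_t+(A_t^{L^0})^\top\Delta_{t+1}A_t^{L^0},\qquad \Delta_T=0.
\]
This is the matrix analogue of the advantage integral in the proof of Theorem~\ref{thm:nonlinearbackward}, and it shows $\Delta_t\succeq0$.

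\emph{Step 3: bound the recursion.} Use $\norm{E_t}\le\epsilon\le\delta$. Then $\norm{A_t^{L^0}}\le\norm{A_t^{L^\star}}+\norm{\bar B}\delta=:a_t$ is uniform over the $\delta$-ball. Unrolling the recursion backward over the finite horizon gives
\[
\norm{\Delta_{t+1}}\le\epsilon^2\sum_{j>t}\norm{G_j^\star}\prod_{k=t+1}^{j-1}a_k^2 .
\]
The constants are finite because all coefficients are finite, $\mathsf S^\star$ is finite by the Riccati recursion \eqref{eq:lqgrecursion}, and the horizon is finite. Combining this with Step~1 defines
\[
C_\delta=\max_t r^{-1}\norm{\bar B}\bigl(\norm{\bar A_t}+\norm{\bar B}\norm{L_t^\star}\bigr)\sum_{j>t}\norm{G_j^\star}\prod_{k=t+1}^{j-1}a_k^2 ,
\]
which is \eqref{eq:lqgrefinementconstant}. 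The constants are not uniform in the horizon or the dimension, as noted after Proposition~\ref{prop:lqgbackwardexact}.

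\emph{Step 4: action bound.} For \eqref{eq:lqgrefinementrho}, write $u_t^+-u_t^{\theta,\star}=-(L_t^+-L_t^\star)z$ at the common fitted belief $z=(m_t^\theta(h)^\top,1)^\top$. Then
\[
\norm{u_t^+-u_t^{\theta,\star}}^2\le C_\delta^2\epsilon^4\norm{z}^2 .
\]
Taking $\E_\rho$ and a square root yields $\sqrt{M_\rho}\,C_\delta\epsilon^2$.

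I expect Step~2 to be the main obstacle. The linear-term cancellation must be checked carefully, including the last row and column of the augmented matrix, which carry the affine terms, and the convention that noise constants stay in $c_t^L$. Once the cancellation is verified, the remaining steps are bookkeeping.
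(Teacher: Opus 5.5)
Your proof is correct and follows the paper's argument: the completed-square recursion $\Delta_t=E_t^\top G_t^\star E_t+(A_t^{L^0})^\top\Delta_{t+1}A_t^{L^0}$ with $\Delta_T=0$, the exact gain identity with $G_t^0\succeq rI$, backward unrolling over the finite horizon, and the pointwise bound $\norm{u_t^+-u_t^{\theta,\star}}\le C_\delta\epsilon^2\norm{z}$ integrated under $\rho$. The only difference is bookkeeping: your Step~1 bracket factors as $\bar B^\top\Delta_{t+1}(\bar A_t-\bar B L_t^\star)=\bar B^\top\Delta_{t+1}A_t^\star$, and bounding $\norm{A_t^\star}_{\rm op}\le a_\star$, $\norm{G_j^\star}_{\rm op}\le g_\star$ and $a_k\le a_\delta$ uniformly gives exactly the stated constant $C_\delta=b\,a_\star g_\star H_T(a_\delta)/r$; your triangle-inequality factor $\norm{\bar A_t}+\norm{\bar B}\norm{L_t^\star}$ instead gives a valid constant, but not the one the statement names.
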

\begin{proof}[Proof of Theorem~\ref{thm:lqgrefinement}]
Let $D_t=L_t^0-L_t^\star$, $\Delta\mathsf S_t=\mathsf S_t^0-\mathsf S_t^\star$, and $A_t^\star=\bar A_t-\bar B L_t^\star$. Completing the square around the optimal coefficient cancels the linear term in $D_t$ and gives
\begin{align}
\Delta\mathsf S_T&=0,\qquad
\Delta\mathsf S_t=(A_t^0)^\top\Delta\mathsf S_{t+1}A_t^0+D_t^\top G_t^\star D_t,\label{eq:affinequadraticerror}\\
\Delta c_T&=0,\qquad
\Delta c_t=\Delta c_{t+1}+\tfrac12\tr(\Delta\mathsf S_{t+1}\bar W_t).\nonumber
\end{align}
Both differences are nonnegative in their respective matrix/scalar orders. Define the finite quantities
\begin{align}
b&=\norm{\bar B}_{\rm op},\quad a_\star=\max_{t<T}\norm{A_t^\star}_{\rm op},\quad
g_\star=\max_{t<T}\norm{G_t^\star}_{\rm op},\quad a_\delta=a_\star+b\delta,\nonumber\\
H_T(a)&=\sum_{j=0}^{T-1}a^{2j},\qquad
C_\delta=\frac{b a_\star g_\star}{r}H_T(a_\delta).
\label{eq:lqgrefinementconstant}
\end{align}
Since $\norm{A_t^0}_{\rm op}\leq a_\delta$, backward induction in \eqref{eq:affinequadraticerror} gives
\[
\norm{\Delta\mathsf S_t}_{\rm op}
\leq g_\star\epsilon^2\sum_{j=0}^{T-t-1}a_\delta^{2j}.
\]
Subtracting the two stationarity systems in \eqref{eq:affinegreedy} yields the exact identity
\begin{equation}
G_t^0(L_t^+-L_t^\star)=\bar B^\top\Delta\mathsf S_{t+1}A_t^\star.
\label{eq:affinegainerror}
\end{equation}
Because $G_t^0\succeq R_t^u\succeq rI$, the preceding bound proves \eqref{eq:lqgrefinement}. At the final decision $\Delta\mathsf S_T=0$, so its exact correction is optimal even without a small initial error. At other dates a finite $C_\delta$ exists for every finite horizon, although it can grow rapidly with that horizon. Finally,
$\norm{u_t^+(h)-u_t^{\theta,\star}(h)}\leq C_\delta\epsilon^2\norm{z_t(h)}$.
Square, integrate under the same $\rho$, and take a square root to obtain \eqref{eq:lqgrefinementrho}.
\end{proof}

The premise controls all affine coefficients, intercepts, and remaining dates. Small finite-panel action MSE, including the five-date experimental panel, does not establish it. Neural continuations need additional approximation arguments.

The affine augmentation includes a constant coordinate, so its transition matrix has an eigenvalue at one. The bound above is finite-horizon; no contraction or horizon-uniform constant is asserted.

\subsection{Propagation through a refreshed affine tail}\label{app:refreshedpropagation}
The preceding cancellation also quantifies the effect of already-corrected future policies. Retain the fitted LQG assumptions and augmented notation, and condition on an arbitrary sequence of deployed affine maps $\bar\pi_j(m)=-\bar L_jz$. Let $\bar{\mathsf S}_j$ be their exact evaluation matrices, and define the exact greedy coefficient at date $t$ using this actual tail by
\[
 \widetilde L_t=(\bar G_t)^{-1}\bar B^\top\bar{\mathsf S}_{t+1}\bar A_t,
 \qquad \bar G_t=R_t^u+\bar B^\top\bar{\mathsf S}_{t+1}\bar B.
\]
Write $e_j=\norm{\bar L_j-L_j^\star}_{\rm op}$ and
$\eta_t=\norm{\bar L_t-\widetilde L_t}_{\rm op}$. The latter includes departure from the exact current-action solve and its affine representation. Put
$\bar A_j^{\rm cl}=\bar A_j-\bar B\bar L_j$,
$A_t^\star=\bar A_t-\bar B L_t^\star$, and
\[
 \Phi_{t+1,t+1}=I,\qquad
 \Phi_{t+1,j}=\bar A_{j-1}^{\rm cl}\cdots\bar A_{t+1}^{\rm cl}
 \quad(j>t+1).
\]
Then, for $R_t^u\succeq rI$,
\begin{equation}
 e_t\leq\eta_t+
 \frac{\norm{\bar B}_{\rm op}\norm{A_t^\star}_{\rm op}}{r}
 \sum_{j=t+1}^{T-1}
 \norm{\Phi_{t+1,j}}_{\rm op}^2
 \norm{G_j^\star}_{\rm op}e_j^2.
 \label{eq:refreshedaffinebudget}
\end{equation}
Indeed, completion of the square gives
\[
 \bar{\mathsf S}_{t+1}-\mathsf S_{t+1}^\star
 =\sum_{j=t+1}^{T-1}\Phi_{t+1,j}^\top
 (\bar L_j-L_j^\star)^\top G_j^\star
 (\bar L_j-L_j^\star)\Phi_{t+1,j},
\]
and subtraction of the greedy stationarity equations gives
\[
 \bar G_t(\widetilde L_t-L_t^\star)
 =\bar B^\top(\bar{\mathsf S}_{t+1}-\mathsf S_{t+1}^\star)A_t^\star.
\]
Apply the operator norm and $\norm{\bar G_t^{-1}}_{\rm op}\leq1/r$, then add $\eta_t$. The final-date sum is empty, so zero local errors yield exact backward recovery. Nonzero local errors and corrected-tail coefficients determine earlier errors. Node-fit MSE alone does not bound $\eta_t$, and nonlinear or capped maps require further approximation.

\subsection{Riccati reference coefficients}\label{app:riccatireference}
For the main paper's fitted affine LQG model and cost, the deterministic covariance separates from the action optimization. In the main paper's notation, set $S_T=C_T^x$ and $q_T=0$,
\begin{align}
G_t&=C_t^u+B^\top S_{t+1}B,
&K_t&=G_t^{-1}B^\top S_{t+1}A,\nonumber\\
k_t&=G_t^{-1}B^\top(S_{t+1}b_t+q_{t+1}),\nonumber\\
S_t&=C_t^x+A^\top S_{t+1}A
-A^\top S_{t+1}B G_t^{-1}B^\top S_{t+1}A,\nonumber\\
q_t&=A^\top(S_{t+1}b_t+q_{t+1})\nonumber\\
&\quad-A^\top S_{t+1}B G_t^{-1}B^\top(S_{t+1}b_t+q_{t+1}).
\label{eq:lqgrecursion}
\end{align}

The fitted feedback is $u_t^{\theta,\star}(m)=-K_tm-k_t$; plant coefficients instead define the evaluator-only oracle.

\subsection{Numerical deployment of the finite-horizon correction}\label{app:affineextensions}
\begin{corollary}[Numerical deployment of the affine correction]\label{cor:deployment}
Under Theorem~\ref{thm:lqgrefinement}, let a returned action rule $v$ satisfy
$\Delta_\rho=(\E_\rho\norm{v-u^+}^2)^{1/2}<\infty$, including any internal correction randomness in this expectation. Then
\begin{equation}
\left(\E_\rho\norm{v-u^{\theta,\star}}^2\right)^{1/2}
\leq\sqrt{M_\rho}\,C_\delta\epsilon^2+\Delta_\rho.
\label{eq:numericalrefinement}
\end{equation}
\end{corollary}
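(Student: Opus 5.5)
The plan is a direct application of Minkowski's inequality in the space $L^2(\rho)$ of action-valued random variables, together with Theorem~\ref{thm:lqgrefinement}. On the same probability space that carries $\rho$ and any internal correction randomness, I would write the pointwise decomposition
\[
v-u^{\theta,\star}=(v-u^+)+(u^+-u^{\theta,\star}).
\]
Here all three actions are evaluated at the same fitted belief $m_t^\theta(h)$ for each history--time pair. Taking $L^2(\rho)$ norms and applying the triangle inequality then gives
\[
\Bigl(\E_\rho\norm{v-u^{\theta,\star}}^2\Bigr)^{1/2}\le\Delta_\rho+\Bigl(\E_\rho\norm{u^+-u^{\theta,\star}}^2\Bigr)^{1/2}.
\]
This holds provided both terms on the right are finite, so that the left side is finite as well.

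For the second term I would invoke \eqref{eq:lqgrefinementrho} directly. Under the hypotheses of Theorem~\ref{thm:lqgrefinement}, namely $\epsilon\le\delta$ and $M_\rho<\infty$, that bound gives $\sqrt{M_\rho}\,C_\delta\epsilon^2$. The first term is $\Delta_\rho$, finite by assumption. Substituting both yields \eqref{eq:numericalrefinement}.

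The one step that needs care is measurability and the role of internal randomness. When the returned rule $v$ depends on correction noise independent of the history, I would take the expectation $\E_\rho$ over the product of $\rho$ with that noise law. The quantities $u^+$ and $u^{\theta,\star}$ are deterministic affine functions of $z_t(h)$, so their $L^2$ norm is unchanged by integrating over the extra noise, and \eqref{eq:lqgrefinementrho} still applies. Minkowski's inequality holds on the product space. I expect no real obstacle beyond stating this convention explicitly and noting that $v$ is assumed jointly measurable in the history and the internal noise.
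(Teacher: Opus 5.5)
Your proposal is correct and matches the paper's proof, which simply invokes Minkowski's inequality in $L^2(\rho)$ on the decomposition $v-u^{\theta,\star}=(v-u^+)+(u^+-u^{\theta,\star})$ and bounds the second term by \eqref{eq:lqgrefinementrho}. Your remark that $u^+$ and $u^{\theta,\star}$ do not depend on the internal correction noise is exactly the convention that the statement's parenthetical about internal randomness relies on.
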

Minkowski's inequality proves the claim. The term $\Delta_\rho$ includes sampling, solving, regularization, clipping, and rejection errors and requires independent control. For other baseline classes, replace $\sqrt{M_\rho}C_\delta\epsilon^2$ by the exact correction's actual policy error. The same accounting gives the damping bound below.

\paragraph{Damping and the numerical margin.}
In one fixed norm $\norm{\cdot}_N$, for example $L^2(\rho)$ with internal randomness included, define $E_N(w)=\norm{w-u^{\theta,\star}}_N$. If $\Delta_N=\norm{v-u^+}_N$, then for a constant $0\leq\alpha\leq1$ and a feasible mixture,
\begin{equation}
E_N((1-\alpha)u^0+\alpha v)
\leq(1-\alpha)E_N(u^0)+\alpha\{E_N(u^+)+\Delta_N\}.
\label{eq:dampingerrorbudget}
\end{equation}
If the bracket is below $E_N(u^0)$, every constant $\alpha>0$ gives strict improvement. State-dependent acceptance belongs inside $v$ and $\Delta_N$; a varying gain cannot be pulled outside the norm.

\paragraph{Why action accuracy and percentage cost improvement differ.}
For any admissible policy with finite second moments in the fixed LQG model, completion of the optimal Bellman square and telescoping give
\begin{equation}
J_\theta(\pi)-J_\theta(\pi^{\theta,\star})
=\tfrac12\E_\theta^\pi\sum_{t=0}^{T-1}
e_t^\top G_t^\star e_t,
\qquad e_t=u_t-u_t^{\theta,\star}(m_t).
\label{eq:lqgweightedcostgap}
\end{equation}
The optimal advantage is $e^\top G_t^\star e/2$; summing under $\pi$ proves the identity, including covariance-dependent constants. Fitted excess cost is therefore curvature-weighted action error under the policy visitation law. Total cost, plant evaluation, and common-history action MSE use different baselines or measures.

\section{Nonlinear backward accuracy and storage}\label{app:nonlinear}
This appendix proves Theorem~\ref{thm:nonlinearbackward} and Corollary~\ref{cor:nonlinearquadratic}. Section~\ref{app:nlcontinuation} derives continuation errors from policy errors, Section~\ref{app:nltargetstorage} treats correction and the implemented thrust storage, Section~\ref{app:nlclosure} closes the recursion, and Section~\ref{app:nlconsistency} identifies the fixed-ridge obstruction. Throughout, $V^\star,Q^\star,\pi^\star$ abbreviate the fitted-model optimum, not a plant reference.

\subsection{Weighted continuation cancellation}\label{app:nlcontinuation}
Use the norm \eqref{eq:nlpolicynorm} with induced multilinear derivative norms. Its factorial weights imply
$\norm{fg}_{2,vw}\le\norm f_{2,v}\norm g_{2,w}$ for compatible products, by Leibniz' rule. Consider a prescribed class of $C^2$ feedback policies $\rho$ and weights $v_t,w_t,\chi_t\ge1$ with $w_t\ge \chi_tv_t^2$. Assume differentiation under expectations is justified by integrable envelopes, the optimum is interior, and
\begin{equation}
 Q_{t,u}^\star(s,\pi_t^\star(s))=0,\qquad
 \sup_{\rho,\alpha\in[0,1]}
 \norm{Q_{t,uu}^\star(\cdot,\pi_t^\star+\alpha(\rho_t-\pi_t^\star))}_{2,\chi_t}
 \le L_t^\star.
 \label{eq:nlmultiplier}
\end{equation}
The latter is a uniform composition bound, not a small-error assumption. Derivatives of $Q^\star$ through order four and bounded weighted policy derivatives suffice when the corresponding growth bounds hold.

Write $G_t^\rho(s,\omega)=\mathcal T_{\theta,t}(s,\rho_t(s),\omega)$ and $\mathsf P_t^\rho f=\E f(G_t^\rho)$. Assume uniform bounds $p_0,p_1,p_2,p_K$ on the respective ratios
\[
 \frac{\E w_{t+1}(G)}{w_t(s)},\quad
 \frac{\E[w_{t+1}(G)\norm{D_sG}]}{w_t(s)},\quad
 \frac{\E[w_{t+1}(G)\norm{D_sG}^2]}{w_t(s)},\quad
 \frac{\E[w_{t+1}(G)\norm{D_s^2G}]}{w_t(s)}.
\]
The chain rule then gives
\begin{equation}
 \norm{\mathsf P_t^\rho f}_{2,w_t}\le C_t^{\rm tr}\norm f_{2,w_{t+1}},
 \qquad C_t^{\rm tr}=\max\{p_0,p_1+p_K/2,p_2\}.
 \label{eq:nltransition}
\end{equation}
Indeed the zeroth-, first- and half-second-derivative bounds are respectively $p_0a_0$, $p_1a_1$, and $(p_2a_2+p_Ka_1)/2$, where $a_k=\sup\norm{D^kf}/w_{t+1}$.

\begin{lemma}[Quadratic continuation error]\label{lem:nlcontinuation}
Put $E_j^\rho=\norm{\rho_j-\pi_j^\star}_{2,v_j}$. Under \eqref{eq:nlmultiplier}--\eqref{eq:nltransition},
\begin{equation}
 \norm{V_{t+1}^\rho-V_{t+1}^\star}_{2,w_{t+1}}
 \le\mathcal R_t:=\sum_{j>t}\frac{L_j^\star}{2}
       \left(\prod_{k=t+1}^{j-1}C_k^{\rm tr}\right)(E_j^\rho)^2.
 \label{eq:nlcontinuationbudget}
\end{equation}
At construction nodes and their action balls, let $F(u,\omega)=\mathcal T_{\theta,t}(s_i,u,\omega)$ and assume finite uniform bounds
\[
 \Gamma_{g,t}=\sup_{i,u}\E[w_{t+1}(F)\norm{F_u}],\quad
 \Gamma_{H,t}=\sup_{i,u}\E[w_{t+1}(F)(2\norm{F_u}^2+\norm{F_{uu}})].
\]
Then the action-gradient and action-Hessian differences between $Q_t^\rho$ and $Q_t^\star$ are at most $\Gamma_{g,t}\mathcal R_t$ and $\Gamma_{H,t}\mathcal R_t$, respectively.
\end{lemma}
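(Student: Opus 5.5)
The argument has three steps: a backward Bellman-difference recursion for $D_j=V_j^\rho-V_j^\star$, a pointwise Taylor bound on the one-step advantage that uses optimal-action stationarity, and a differentiation of the common-transition expectation at the construction nodes.

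\emph{Step 1: Bellman difference.} For $j\le T-1$, $V_j^\rho(s)=Q_j^\rho(s,\rho_j(s))$. The two action values share the stage cost and the transition law, so
\[
Q_j^\rho(s,\rho_j(s))=Q_j^\star(s,\rho_j(s))+(\mathsf P_j^\rho D_{j+1})(s).
\]
This gives
\[
D_j=A_j^\rho+\mathsf P_j^\rho D_{j+1},\qquad A_j^\rho=Q_j^\star(\cdot,\rho_j)-V_j^\star,\qquad D_T=0 .
\]

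\emph{Step 2: advantage bound.} By \eqref{eq:nlmultiplier}, $Q_{j,u}^\star(s,\pi_j^\star(s))=0$. Taylor's formula with integral remainder along the segment from $\pi_j^\star(s)$ to $\rho_j(s)$ then gives
\[
A_j^\rho(s)=\int_0^1(1-\alpha)\,h_j(s)^\top H_{j,\alpha}^\rho(s)\,h_j(s)\,d\alpha,\qquad h_j=\rho_j-\pi_j^\star .
\]
I would apply the weighted product inequality $\norm{fg}_{2,vw}\le\norm f_{2,v}\norm g_{2,w}$ twice, to the bilinear form $h^\top Hh$, with weights $v_j\cdot\chi_j\cdot v_j\le w_j$. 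Together with $\int_0^1(1-\alpha)\,d\alpha=1/2$, this yields
\[
\norm{A_j^\rho}_{2,w_j}\le \frac{L_j^\star}{2}(E_j^\rho)^2 .
\]
Differentiating under the $\alpha$-integral (up to second order in $s$) is covered by the uniform composition bound in \eqref{eq:nlmultiplier}. Because $L_j^\star$ is uniform in $\alpha$, a Minkowski-type integral inequality in the $C^2$ norm suffices.

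\emph{Step 3: iterate.} Applying \eqref{eq:nltransition} repeatedly in $D_j=A_j^\rho+\mathsf P_j^\rho D_{j+1}$ and unrolling from $t+1$ to $T-1$ gives
\[
\norm{D_{t+1}}_{2,w_{t+1}}\le\sum_{j>t}\Bigl(\prod_{k=t+1}^{j-1}C_k^{\rm tr}\Bigr)\norm{A_j^\rho}_{2,w_j}.
\]
This is $\mathcal R_t$ by Step 2. The product is empty for $j=t+1$, which matches the definition.

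\emph{Step 4: derivative discrepancies at the nodes.} Using \eqref{eq:continuationgradientgap}, at node $s_i$ I write
\[
Q_t^\rho(s_i,u)-Q_t^\star(s_i,u)=\E\,D_{t+1}(F(u,\omega)).
\]
Differentiating once gives $\E[F_u^\top\nabla D_{t+1}(F)]$, whose norm is at most
\[
\E\bigl[w_{t+1}(F)\norm{F_u}\bigr]\sup\frac{\norm{DD_{t+1}}}{w_{t+1}}\le\Gamma_{g,t}\mathcal R_t,
\]
since the first-derivative coefficient in the norm \eqref{eq:nlpolicynorm} is $1$. Differentiating twice gives
\[
\E\bigl[F_u^\top D^2D_{t+1}(F)F_u+DD_{t+1}(F)F_{uu}\bigr].
\]
Here $\sup\norm{D^2D_{t+1}}/w_{t+1}\le2\norm{D_{t+1}}_{2,w_{t+1}}$ because of the factor $1/2!$, while the first-derivative term is bounded by $\norm{D_{t+1}}_{2,w_{t+1}}$. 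The sum is therefore at most
\[
\E\bigl[w_{t+1}(2\norm{F_u}^2+\norm{F_{uu}})\bigr]\mathcal R_t\le\Gamma_{H,t}\mathcal R_t .
\]
The interchanges are justified by the assumed integrable envelopes.

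The main obstacle is the careful $C^2$ weighted bookkeeping in Step 2. The state derivatives hit both $h_j$ and $H_{j,\alpha}^\rho$, and $H$ is itself evaluated along a state-dependent segment. I would handle this by treating $H_{j,\alpha}^\rho$ as a given function of $s$ controlled by the hypothesis, and by relying on the factorial-weighted Leibniz inequality so that no extra combinatorial constants appear. I would also confirm that $w_j\ge\chi_jv_j^2$ makes the weights compatible.
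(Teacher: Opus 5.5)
Your proposal is correct and follows the paper's proof essentially step for step. The shared steps are the Bellman-difference recursion $D_j=Q_j^\star(\cdot,\rho_j)-V_j^\star+\mathsf P_j^\rho D_{j+1}$ with $D_T=0$; the stationarity-based Taylor formula for the advantage, bounded by $L_j^\star(E_j^\rho)^2/2$ via the factorial-weighted product inequality and $w_j\ge\chi_jv_j^2$; backward substitution through $C_k^{\rm tr}$; and differentiating $\E D_{t+1}(F)$ once and twice in $u$, where the $1/2!$ weight supplies the factor $2$ in $\Gamma_{H,t}$.
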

\begin{proof}
For $h=\rho_t-\pi_t^\star$, stationarity and Taylor's formula give
\[
 Q_t^\star(s,\rho_t(s))-V_t^\star(s)
 =\int_0^1(1-\alpha)h(s)^\top Q_{t,uu}^\star(s,\pi_t^\star(s)+\alpha h(s))h(s)\,d\alpha.
\]
The weighted product bound makes its $C^2$ norm at most $L_t^\star(E_t^\rho)^2/2$. Subtracting Bellman equations yields
$D_t=Q_t^\star(\cdot,\rho_t)-V_t^\star+\mathsf P_t^\rho D_{t+1}$, with $D_T=0$ and $D_t=V_t^\rho-V_t^\star$. Backward substitution proves \eqref{eq:nlcontinuationbudget}. Finally differentiate $\E D_{t+1}(F)$ once and twice in $u$. The terms are $\E[D D_{t+1}(F)F_u]$ and
$\E[D^2D_{t+1}(F)[F_u,F_u]+D D_{t+1}(F)F_{uu}]$. The norm bounds its first derivative by $w_{t+1}\mathcal R_t$ and its second by $2w_{t+1}\mathcal R_t$, giving the stated constants.
\end{proof}

These conditions allow unbounded Gaussian states when the policies, transitions and optimal solution have suitable polynomial-growth envelopes and the weights dominate the displayed moments. They do not follow from Gaussianity alone. For a uniform action-Hessian Lipschitz constant in the actual continuation, it suffices additionally to have third-order rollout derivatives with integrable envelopes; a finite smooth dictionary with bounded coefficients can supply policy derivative bounds. Covariance inverses require a positive lower eigenvalue bound. Hard-cap crossings require the separate treatment in Section~\ref{app:regularity}.

\subsection{Node correction, acceptance and policy storage}\label{app:nltargetstorage}
Fix a date and node, suppress $t$, and set $q=Q_t^\rho(s_i,\cdot)$, $u^*=\pi_t^\star(s_i)$, $u^0=\pi_t^0(s_i)$. On a ball of radius $r_B$ around $u^*$ suppose
$2\mu I\preceq Q_{t,uu}^\star\preceq L_*I$ and $\Gamma_H\mathcal R_t\le\mu$. Lemma~\ref{lem:nlcontinuation} gives $\mu I\preceq q''\preceq LI$, with $L=L_*+\Gamma_H\mathcal R_t$. Let $q''$ be $L_h$-Lipschitz there. For the used matrix and computed step assume
\[
 \widehat B\succeq\widehat mI,\quad
 \norm{\widehat B-q''(u^0)}\le\delta_H,\quad
 \norm{\widehat g-q'(u^0)}\le\delta_g,\quad
 \norm{\widehat B d+\widehat g}\le\delta_{\rm lin}.
\]
Thus $\delta_H$ includes curvature-floor bias. Taking uniform node bounds and $\epsilon^0=\max_i\norm{u^0-u^*}$, the raw Newton proposal $n=u^0+d$ and shaped proposal $p$ satisfy
\begin{equation}
 \norm{n-u^*}\le b_N:=\frac{L_h(\epsilon^0)^2/2+\delta_H\epsilon^0+
 \delta_g+\delta_{\rm lin}+\Gamma_g\mathcal R_t}{\widehat m},\qquad
 \norm{p-u^*}\le b_P:=b_N+\eta_{\rm node}.
 \label{eq:nlnewtontarget}
\end{equation}
To prove this, multiply $n-u^*$ by $\widehat B$, add and subtract $q'(u^0)$ and $q''(u^0)(u^0-u^*)$, and integrate $q''$ along $[u^*,u^0]$. The Taylor remainder is bounded by $L_h(\epsilon^0)^2/2$ and $\norm{q'(u^*)}\le\Gamma_g\mathcal R_t$. A radial step cap gives $\eta_{\rm node}=(\norm d-r)_+$; fixed damping gives $(1-\alpha)\norm d$, which need not be second order.

If $\Gamma_g\mathcal R_t<\mu r_B$, the minimizer $x_\rho$ on the closed ball is interior: on its boundary,
$(u-u^*)^\top q'(u)\ge\mu r_B^2-\Gamma_g\mathcal R_t r_B>0$.
Strong convexity gives $\norm{x_\rho-u^*}\le c:=\Gamma_g\mathcal R_t/\mu$.
Assume $u^0,p$ lie in this ball. A paired comparison $\widehat\Delta$ with
$|\widehat\Delta-(q(p)-q(u^0))|\le\tau$ accepts $p$ if $\widehat\Delta<0$ and otherwise returns $u^0$. In both cases the accepted label $y$ obeys $q(y)\le q(p)+\tau$. Hence
\begin{equation}
 \norm{y-u^*}\le b:=\kappa b_P+(\kappa+1)c+\sqrt{2\tau/\mu},
 \qquad\kappa=\sqrt{L/\mu}.
 \label{eq:nlaccepted}
\end{equation}
Indeed, $\mu\norm{y-x_\rho}^2/2\le L\norm{p-x_\rho}^2/2+\tau$.
Exact acceptance therefore preserves second-order target accuracy even upon rejection. A sharper alternative is
\begin{equation}
 \norm{p-x_\rho}\le\eta\norm{u^0-x_\rho},\qquad
 L\eta^2\le\mu/2,\qquad
 \tau<\frac{\mu}{4}\norm{u^0-x_\rho}^2.
 \label{eq:nlacceptmargin}
\end{equation}
Strong convexity and smoothness give $q(p)-q(u^0)\le-\mu\norm{u^0-x_\rho}^2/4$, so the comparison accepts $p$ and the sharper bound $b_P$ applies. Under the other scaling assumptions of Corollary~\ref{cor:nonlinearquadratic}, second-order accepted targets also follow if each node either already has $\norm{u^0-u^*}=O(\epsilon^2)$ or satisfies \eqref{eq:nlacceptmargin}: in the first case both possible labels are second-order accurate, and in the second the accurate proposal is accepted.

\paragraph{Sampling scope of the margin alternative.}\label{app:nlacceptsampling}
Condition on the constructed actions and stored tail, and use $M$ fresh independent paired rollout units, each with conditional mean $q(p)-q(u^0)$. If a pathwise action-gradient envelope on the segment has second moment at most $K^2$, the paired sample-mean error satisfies
\[
 \operatorname{Var}(\widehat\Delta)\le
 K^2\norm{p-u^0}^2/M.
\]
Thus, for a fixed failure probability, $\norm{p-u^0}=O(\epsilon)$ gives an $O(\epsilon/\sqrt M)$ comparison-error bound. Realizing $\tau=O(\epsilon^4)$ by this bound has sufficient sample order $\epsilon^{-6}$. Under \eqref{eq:nlacceptmargin}'s contraction and the additional nondegeneracy $\norm{u^0-x_\rho}=\Theta(\epsilon)$, order $\epsilon^{-2}$ suffices for the comparison margin instead. These are conditional sufficient allocations, not sample-complexity lower bounds; an $O(\epsilon)$ baseline upper bound alone does not imply nondegeneracy. With uniformly bounded gradient-estimator variance, achieving $\delta_g=O(\epsilon^2)$ by independent sample means still has sufficient sample order $\epsilon^{-4}$ at fixed confidence, before bias and the other errors are controlled.

\paragraph{The sequential polynomial--kernel map.}
The thrust storage in Online Supplement~\ref{sup:regressionrefit} first fits direct actions and then fits a kernel to the residual after an intermediate cap. For fixed normalized features and a selected candidate, let $Y$ contain accepted labels, $Z$ the polynomial design, $z(s)$ its feature column, and
\[
 M=Z^\top Z+\Lambda,\quad A=M^{-1}Z^\top,\quad H=ZA,\quad
 R=(K+\lambda_K I)^{-1},\quad K_{ij}=k_j(s_i).
\]
Here $\Lambda\succeq0$, $M$ is invertible, $K\succeq0$, and $\lambda_K>0$; the code's intercept penalty is also positive. Let $b_i^0=\pi_t^0(s_i)$ and apply the hard cap rowwise. Before the final deployment cap the actual real-arithmetic map is
\begin{align}
 f_Y(s)&=z(s)^\top AY+k(s)^\top R\{Y-b^0-\mathcal C_r(HY-b^0)\}\nonumber\\
 &=\mathsf S Y(s)+k(s)^\top R D_{\rm cap}(Y),\label{eq:nlactualstorage}\\
 \mathsf S Y(s)&=[z(s)^\top A+k(s)^\top R(I-H)]Y,\quad
 D_{\rm cap}(Y)=HY-b^0-\mathcal C_r(HY-b^0).\nonumber
\end{align}
Thus replacing the intermediate prediction by $HY$ would analyze a different operator. Set $r(s)^\top=z(s)^\top A+k(s)^\top R(I-H)$ and
$s^{\rm reg}=\sqrt N\norm r_{2,v_t}$. Since $0\preceq H\preceq I$, a bound is
$s^{\rm reg}\le\sqrt N(\norm z_{2,v_t}\norm A+\norm k_{2,v_t}\norm R)$; no full column rank of $Z$ is needed. Polynomial weights and the feature-scale/kernel-width floors give finite derivative envelopes, although narrow kernels can make them large.

Choose any comparator $g^\dagger=z^\top W+k^\top C$ with
$\norm{g^\dagger-\pi_t^\star}_{2,v_t}\le\alpha$. Put
\begin{align}
 \nu^2&=N^{-1}\sum_i v_t(s_i)^2,\quad D_W=AKC-M^{-1}\Lambda W,\nonumber\\
 \beta&=\norm{z^\top-k^\top RZ}_{2,v_t}\norm{D_W}_F
      +\lambda_K\norm{k^\top R}_{2,v_t}\norm C_F,\nonumber\\
 \chi_Y&=\norm{k^\top R D_{\rm cap}(Y)}_{2,v_t}
 \le\norm{k^\top R}_{2,v_t}\norm{D_{\rm cap}(Y)}_F.\nonumber
\end{align}
For maximum node-label error $b$,
\begin{equation}
 \norm{f_Y-\pi_t^\star}_{2,v_t}\le s^{\rm reg}b+F_t^{\rm reg},\qquad
 F_t^{\rm reg}=(1+s^{\rm reg}\nu)\alpha+\beta+\chi_Y+\eta_{\rm arith}.
 \label{eq:nlstoragebudget}
\end{equation}
To see this, $\norm{\mathsf S\Delta Y}_{2,v_t}\le\norm r_{2,v_t}\norm{\Delta Y}_F$, and exact comparator labels satisfy
\[
 \mathsf S(ZW+KC)-g^\dagger=(z^\top-k^\top RZ)D_W-\lambda_K k^\top RC.
\]
The comparator's node residual has Frobenius norm at most $\sqrt N\nu\alpha$. Decompose around those labels and add the intermediate-cap term. Here $\eta_{\rm arith}$ bounds residual rounding and stored-coefficient errors in the smooth map; the float32 intermediate prediction contributes at most $\norm{k^\top R}_{2,v_t}$ times its rounding-error Frobenius norm. Polynomial-only storage omits all kernel terms. Finite candidate selection may depend on targets: the bound applies to the realized frozen candidate, or uniformly over the candidate list, without differentiating selection.

This is a derivative-norm bound obtained from action-value labels, but its approximation term is essential. For finitely many nodes, a smooth bump $\psi$ supported away from them gives the strongly convex objective $q_\epsilon(s,u)=\norm{u-\epsilon\psi(s)}^2/2$. Exact optimal labels at every node are zero, while the optimal policy differs from zero by $O(\epsilon)$ away from them. Smoothness and zero node MSE therefore do not imply second-order policy accuracy. Likewise the coupling term $AKC$ in $\beta$ need not disappear when only the kernel penalty is decreased.

\subsection{Closing the recursion}\label{app:nlclosure}
\begin{proof}[Proof of Theorem~\ref{thm:nonlinearbackward}]
At date $t$, use the actual stored tail $\rho_{>t}=\bar\pi_{>t}$ in Lemma~\ref{lem:nlcontinuation}, so $E_j^\rho=E_j$. Combine \eqref{eq:nlnewtontarget}--\eqref{eq:nlstoragebudget}, and add the deployment discrepancy $\eta_{\rm dep,t}=\norm{\bar\pi_t-f_{Y_t}}_{2,v_t}$. This gives
\begin{align}
 E_t\le{}&s_t^{\rm reg}\left[\kappa_t\left(
 \frac{L_{h,t}(\epsilon_t^0)^2/2+\delta_{H,t}\epsilon_t^0+\delta_{g,t}
 +\delta_{\rm lin,t}+\Gamma_{g,t}\mathcal R_t}{\widehat m_t}
 +\eta_{\rm node,t}\right)\right.\nonumber\\
 &\left.\hspace{13mm}+(\kappa_t+1)\frac{\Gamma_{g,t}\mathcal R_t}{\mu_t}
 +\sqrt{2\tau_t/\mu_t}\right]+F_t^{\rm reg}+\eta_{\rm dep,t}.
 \label{eq:nlexplicitrecursion}
\end{align}
Equation~\eqref{eq:nonlinearbackward} follows with
\begin{align*}
 A_t^{\rm N}&=s_t^{\rm reg}\kappa_t L_{h,t}/(2\widehat m_t),\\
 B_{tj}^{\rm tail}&=s_t^{\rm reg}\Gamma_{g,t}
 \left(\kappa_t/\widehat m_t+(\kappa_t+1)/\mu_t\right)
 (L_j^\star/2)\prod_{k=t+1}^{j-1}C_k^{\rm tr},\\
 C_t^{\rm err}&=s_t^{\rm reg}\max\{\kappa_t/\widehat m_t,\kappa_t,\sqrt{2/\mu_t}\}.
\end{align*}
The same policy norm is produced by storage and required by the earlier-date continuation calculation, closing the backward recursion.
\end{proof}

\paragraph{Local radius conditions.}
Prescribe policy derivative/coefficient envelopes and error radii $\overline E_t$ before the construction. Descending in $t$, insert later radii in \eqref{eq:nlcontinuationbudget}. Check $\Gamma_{H,t}\mathcal R_t\le\mu_t$, $\Gamma_{g,t}\mathcal R_t<\mu_t r_{B,t}$, and $\epsilon_t^0,b_{P,t}\le r_{B,t}$. Require \eqref{eq:nlexplicitrecursion} to be at most $\overline E_t$, with the stored map satisfying the prescribed regularity envelopes. Induction verifies the local regime without assuming the desired policy-error conclusion. These are mathematical sufficient conditions, not certificates supplied by the recorded finite-node diagnostics.

\begin{proof}[Proof of Corollary~\ref{cor:nonlinearquadratic}]
At the terminal decision $\mathcal R_{T-1}=0$, and \eqref{eq:nlexplicitrecursion} is $O(\epsilon^2)$. If all later errors are $O(\epsilon^2)$, the fixed finite sum $\mathcal R_t$ is $O(\epsilon^4)$, so the same bound holds at date $t$. Backward induction proves the result. Under the same weighted advantage and transition bounds, the nominal value error is then $O(\epsilon^4)$. The constants need not be uniform in horizon.
\end{proof}

\subsection{Storage consistency and fixed ridge}\label{app:nlconsistency}
\begin{proposition}[Nonzero consistency defect of the direct fit]\label{prop:nlridgefloor}
For \eqref{eq:nlactualstorage}, let both node baseline and exact optimal labels be $Y^*\ne0$, with $\Lambda\succ0$ and $\lambda_K>0$. Let $\overline Y$ be the final capped node actions. Then
\begin{equation}
 \norm{\overline Y-Y^*}_F\ge\min\{c\norm{Y^*}_F,r\}>0,\qquad
 c=\frac{\lambda_K}{\lambda_{\max}(K)+\lambda_K}
       \frac1{1+\norm{Z\Lambda^{-1/2}}^2}.
 \label{eq:nlridgefloor}
\end{equation}
For polynomial-only storage omit the first factor in $c$.
\end{proposition}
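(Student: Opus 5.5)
The plan is to compute the final node actions in closed form under the hypothesis that the baseline and optimal labels coincide, and then bound the resulting displacement from below. With $Y=b^0=Y^*$, the residual target in \eqref{eq:nlactualstorage} is $Y-b^0-\mathcal C_r(HY-b^0)=-\mathcal C_r(e)$, where
\[
 e:=HY^*-Y^*=-(I-H)Y^*.
\]
Evaluating the map at the nodes uses the kernel matrix $K$, so the pre-deployment node predictions are $HY^*-KR\,\mathcal C_r(e)$. Writing
\[
 \Delta:=e-KR\,\mathcal C_r(e),
\]
the final capped node actions are $\overline Y=Y^*+\mathcal C_r(\Delta)$, with the cap applied rowwise. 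The claim is therefore a lower bound on $\norm{\mathcal C_r(\Delta)}_F$.

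\emph{Step 1 (direct ridge cannot reproduce $Y^*$).} Put $\widetilde Z=Z\Lambda^{-1/2}$. Then $H=\widetilde Z(\widetilde Z^\top\widetilde Z+I)^{-1}\widetilde Z^\top$, and the push-through identity gives $I-H=(I+\widetilde Z\widetilde Z^\top)^{-1}$. Its smallest eigenvalue is $1/(1+\norm{\widetilde Z}^2)$. Hence $\norm e_F\ge\norm{Y^*}_F/(1+\norm{Z\Lambda^{-1/2}}^2)>0$, using $\Lambda\succ0$.

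\emph{Step 2 (the kernel corrects only a fraction of the residual).} Since $K\succeq0$ and $\lambda_K>0$, the matrix $KR=K(K+\lambda_KI)^{-1}$ is symmetric with spectrum in $[0,\lambda_{\max}(K)/(\lambda_{\max}(K)+\lambda_K)]$. Consequently $I-KR\succeq c_1I$ with $c_1=\lambda_K/(\lambda_{\max}(K)+\lambda_K)$. Without intermediate clipping, $\Delta=(I-KR)e$, and Steps 1--2 immediately give $\norm\Delta_F\ge c\norm{Y^*}_F$. To cover rowwise clipping I will pair $\Delta$ with $\mathcal C_r(e)$. Rowwise, $e_i^\top\mathcal C_r(e_i)\ge\norm{\mathcal C_r(e_i)}^2$, because the cap is a nonnegative rescaling by a factor at most one. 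This yields
\[
 \langle\Delta,\mathcal C_r(e)\rangle\ge c_1\norm{\mathcal C_r(e)}_F^2.
\]
By Cauchy--Schwarz, $\norm\Delta_F\ge c_1\norm{\mathcal C_r(e)}_F$.

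\emph{Step 3 (final cap).} For any matrix $X$, rowwise capping satisfies $\norm{\mathcal C_r(X)}_F\ge\min\{\norm X_F,r\}$. If no row is clipped the map is the identity; if some row is clipped, that row alone has norm $r$. Applying this to $\Delta$, and also to $e$ when the intermediate cap is active, gives the bound as a chain of minima. For polynomial-only storage the kernel term is absent, so $\Delta=e$, and only Step 1 and the final cap are used. This explains why the first factor in $c$ is omitted in that case.

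The main obstacle is the regime in which the intermediate cap is active. There, the chain above produces $\min\{c\norm{Y^*}_F,c_1r\}$ rather than the clean $\min\{c\norm{Y^*}_F,r\}$. I would first try to sharpen the analysis on clipped rows, using the fact that a clipped row $i$ has $\norm{e_i}>r$ while the kernel correction there is only partial. If that fails, I would state the floor with the slightly weaker constant, which still yields the qualitative conclusion $>0$ that the proposition needs.
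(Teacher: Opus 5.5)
Your reduction matches the paper's: in your notation $e=-(I-H)Y^*$ (the paper's $d$), $\Delta=e-KR\,\mathcal C_r(e)$ (the paper's $e$), and $\overline Y-Y^*=\mathcal C_r(\Delta)$ rowwise. Step~1 and the final-cap bound in Step~3 are also the paper's. The gap is the one you flag yourself. When the intermediate cap is active, your chain proves only $\norm{\overline Y-Y^*}_F\ge\min\{c\norm{Y^*}_F,c_1r\}$, with $c_1=\lambda_K/(\lambda_{\max}(K)+\lambda_K)$. This is strictly weaker than \eqref{eq:nlridgefloor} whenever $K\ne0$ and $c_1r<c\norm{Y^*}_F$. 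So the proposal as written does not establish the stated floor, and your fallback would prove a different, weaker proposition. The loss comes from pairing $\Delta$ with $\mathcal C_r(e)$. That gives a lower bound in terms of $\norm{\mathcal C_r(e)}_F$, which you then bound below by $\min\{\norm e_F,r\}$, so $r$ enters multiplied by $c_1$.

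The missing step is to use the intermediate cap only through its nonexpansiveness, as an upper bound. By the reverse triangle inequality,
\[
 \norm\Delta_F\ge\norm e_F-\norm{KR}\,\norm{\mathcal C_r(e)}_F
 \ge(1-\norm{KR})\norm e_F=c_1\norm e_F.
\]
This holds because each row of $\mathcal C_r(e)$ is a shrinkage of the corresponding row of $e$, so $\norm{\mathcal C_r(e)}_F\le\norm e_F$. It also uses $\norm{KR\,X}_F\le\norm{KR}\norm X_F$ with $\norm{KR}=\lambda_{\max}(K)/(\lambda_{\max}(K)+\lambda_K)$. The bound holds regardless of which rows are clipped, so no $r$ appears at this stage. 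Step~1 then gives $\norm\Delta_F\ge c\norm{Y^*}_F$, and your Step~3 yields exactly $\min\{c\norm{Y^*}_F,r\}$. This is the paper's argument. Your inner-product route is valid but cannot recover the sharp constant, since $\norm{\mathcal C_r(e)}_F$ can be much smaller than $\norm e_F$.
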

\begin{proof}
Write $E_N=I-H=(I+Z\Lambda^{-1}Z^\top)^{-1}\succ0$ and $d=-E_NY^*$. The raw node error, including the intermediate cap, is $e=d-KR\mathcal C_r(d)$. Since $\norm{KR}=\lambda_{\max}(K)/(\lambda_{\max}(K)+\lambda_K)<1$ and the cap does not increase the Frobenius norm,
$\norm e_F\ge(1-\norm{KR})\norm d_F\ge c\norm{Y^*}_F$.
Finally $\overline Y-Y^*=\mathcal C_r(e)$ rowwise, and
$\norm{\mathcal C_r(e)}_F\ge\min\{1,r/\norm e_F\}\norm e_F$. This proves \eqref{eq:nlridgefloor}, including both caps.
\end{proof}

The thrust polynomial penalty is $0.01$ with intercept penalty $10^{-12}$; each selected kernel penalty is positive. Thus an unchanged direct fit need not preserve even an optimal baseline after storage. The lower bound may be very small and is not an estimate of performance loss in the experiments. It establishes a consistency obstruction to an unconditional asymptotic rate claim. Finite target-based selection cannot remove it automatically: every branch is continuous in its node data, and the minimum of finitely many positive limiting bounds is positive.

A concrete sufficient regime is available when $Z^\top Z/N\succeq\kappa I$, dictionary derivative envelopes are uniform, and the kernel penalty is bounded below. With inactive caps and a polynomial comparator $z^\top W$ satisfying $\norm{z^\top W-\pi_t^\star}_{2,v_t}\le\alpha$, the preceding storage proof gives
\begin{equation}
 \norm{\bar\pi_t-\pi_t^\star}_{2,v_t}
 \le C\bigl(b+\alpha+\lambda_P\norm W_F\bigr),\qquad
 \lambda_P=\norm\Lambda/N.
 \label{eq:nlridgeschedule}
\end{equation}
Indeed its reproduction defect is $-(z^\top-k^\top RZ)M^{-1}\Lambda W$, and $\norm{M^{-1}\Lambda}\le\lambda_P/\kappa$. Direct fitting with bounded $W$, $b,\alpha=O(\epsilon^2)$ and $\lambda_P=O(\epsilon^2)$ preserves second-order accuracy. A baseline-residual fit instead permits $\lambda_P=O(\epsilon)$ if the first-order correction has coefficients $W=O(\epsilon)$ and approximation error $O(\epsilon^2)$. These representation and conditioning assumptions are essential; this latter construction is a sufficient alternative, not a change to the reported thrust algorithm. On unbounded Gaussian state domains, weighted closeness alone does not establish cap inactivity.

\section{Nonsmooth safeguards and belief regularity}\label{app:gaussian}
Section~\ref{app:regularity} distinguishes branchwise curvature under hard caps from valid population derivative identities and gives a smooth alternative. Section~\ref{app:beliefconditions} records the information and state conditions required for an exact Gaussian belief. The metric bound in Section~\ref{app:metricimplementation} remains available without Hessian consistency, subject to its gradient-error and smoothness assumptions.

\subsection{Derivative chain rule and nonsmooth safeguards}\label{app:regularity}
For $F(u,\omega)=\mathcal T_{\theta,t}(s,u,\omega)$, when the required value derivatives and differentiation under expectation are justified, the corresponding chain-rule expressions are
\begin{align}
g_t&=\E\left[\ell_u+F_u^\top\nabla V_{t+1}^\rho(F)\right], \label{eq:gradchain}\\
B_t&=\E\left[\ell_{uu}+F_u^\top\nabla^2V_{t+1}^\rho(F)F_u
+\sum_j\partial_jV_{t+1}^\rho(F)\,\nabla_{uu}^2 F_j\right].
\label{eq:hesschain}
\end{align}
Stage-cost derivatives include dependencies through simulated arguments. The last Hessian term is generally nonzero; dropping it approximates the pathwise derivative \citep{heess2015}.

\paragraph{Differentiability of the implemented continuation.}
The radial cap is nonsmooth at radius $r$. If positive-probability noise paths cross that surface as $u$ varies over $U$, the almost-sure $C^2$-on-$U$ hypothesis of Proposition~\ref{prop:clderivatives} fails, even if the surface has zero probability at one fixed action. Automatic differentiation then computes branchwise curvature without an established population-Hessian unbiasedness guarantee.

First derivatives have a weaker sufficient condition: almost-sure differentiability at the fixed action, together with a local random Lipschitz bound having finite conditional expectation, permits dominated convergence of difference quotients. Lipschitz continuity alone does not ensure differentiability at every fixed action. Second derivatives require additional control. For example, let $Z\sim\N(0,1)$ and $f(u,Z)=\max\{-1,\min\{u+Z,1\}\}$. The pathwise second derivative is zero almost surely at fixed $u$, whereas
\[
 \frac{d^2}{du^2}\E f(u,Z)=\phi(1+u)-\phi(1-u),
\]
which equals $-0.222548$ at $u=1/2$; $\phi$ is the standard-normal density. Differentiating the selected branches misses the moving-boundary contribution. Finite differences of a fixed finite sample away from its kinks need not detect this population discrepancy. The reported derivative checks therefore retain their tested sample and continuation scope.

\paragraph{Smooth safeguards and exact pathwise derivatives.}
The following construction makes the regularity route explicit without changing the interpretation of the reported hard-cap experiments.

\begin{lemma}[Smooth radial caps and continuation regularity]\label{lem:smoothcap}
For $r>0$ and an integer $p\geq1$, define
\begin{equation}
 \mathcal C_{r,p}(d)=
 \frac{d}{[1+(\|d\|/r)^{2p}]^{1/(2p)}}.
 \label{eq:smoothcapfamily}
\end{equation}
Then $\mathcal C_{r,p}$ is $C^\infty$, its norm is strictly below $r$, and, for the hard cap $\mathcal C_r$ in \eqref{eq:meshdeploy},
\begin{equation}
 \sup_d\|\mathcal C_{r,p}(d)-\mathcal C_r(d)\|
 =r(1-2^{-1/(2p)})\longrightarrow0.
 \label{eq:smoothcaperror}
\end{equation}
Fix a finite horizon and an open neighborhood of the current action. Suppose every map evaluated in the actual continuation, including its feasible action parameterization, dynamics, costs and filter, is $C^2$ on the reachable domain. If the complete sampled cost's first and second action derivatives have integrable local envelopes, Proposition~\ref{prop:clderivatives} applies to this smooth continuation. A sufficient way to obtain these envelopes is polynomial growth of the maps and their derivatives through order two, uniformly on that domain, together with base noise having moments of every order and a fixed initial information state. Any inverses or Cholesky factors used in this condition require covariance matrices bounded away from singularity.
\end{lemma}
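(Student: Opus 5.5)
The plan has three parts: the elementary properties of the smooth cap $\mathcal C_{r,p}$, the uniform approximation bound \eqref{eq:smoothcaperror}, and the regularity claim obtained through Proposition~\ref{prop:clderivatives}.

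\emph{Smoothness and norm bound.} Write $\mathcal C_{r,p}(d)=d\,\varphi(\norm d^2)$ with $\varphi(x)=(1+(x/r^2)^p)^{-1/(2p)}$. Since $p$ is a positive integer, $x\mapsto(x/r^2)^p$ is a polynomial, so $\norm d^{2p}$ is a polynomial in $d$ and hence $C^\infty$. The map $y\mapsto(1+y)^{-1/(2p)}$ is smooth on $y>-1$. The composition is therefore $C^\infty$, with no singularity at $d=0$; this is exactly why the integer exponent matters. For the norm, $\norm{\mathcal C_{r,p}(d)}=r\,\sigma/(1+\sigma^{2p})^{1/(2p)}$ with $\sigma=\norm d/r$. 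This is strictly below $r$ because $\sigma^{2p}<1+\sigma^{2p}$.

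\emph{Uniform error.} Both caps are radial and parallel to $d$, so the error reduces to a scalar function of $\sigma$:
\[
e(\sigma)=r\bigl|\sigma(1+\sigma^{2p})^{-1/(2p)}-\min\{\sigma,1\}\bigr|.
\]
For $\sigma\le1$, $e(\sigma)=r\sigma[1-(1+\sigma^{2p})^{-1/(2p)}]$, which is increasing in $\sigma$. For $\sigma\ge1$, $e(\sigma)=r[1-(1+\sigma^{-2p})^{-1/(2p)}]$, which is decreasing in $\sigma$. The supremum is therefore attained at $\sigma=1$ and equals $r(1-2^{-1/(2p)})$. Since $2^{-1/(2p)}\to1$, the error tends to zero as $p\to\infty$.

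\emph{Regularity.} Fix $\omega$. The complete sampled cost $\mathscr C_t^\rho(s,u,\omega)$ is a finite composition of the current action, the transitions $\mathcal T_{\theta,\cdot}$, the filter updates, the stage and terminal costs, and the future policies with their smooth caps and action parameterizations. Each of these maps is $C^2$ on the reachable domain, so the chain rule applied over the finite horizon makes $u\mapsto\mathscr C_t^\rho$ almost surely $C^2$ on the neighborhood $U$. Together with the assumed integrable envelopes, this supplies every hypothesis of Proposition~\ref{prop:clderivatives}, which then gives the conclusion.

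\emph{Sufficient condition for the envelopes.} Induct forward over the finite horizon. Assume the state and its first two $u$-derivatives are bounded by polynomials in $(1+\norm{\omega_{t+1:k}})$, uniformly for $u\in U$. By polynomial growth of the maps and their derivatives, the chain-rule expressions at the next step are again polynomially bounded. Moments of every order of the base noise then make these envelopes integrable. The fixed initial information state ensures the polynomial constants are deterministic.

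\emph{Main obstacle.} The delicate point is uniformity over $u\in U$ when the supremum sits inside the expectation. It is handled by taking the polynomial bounds uniform on the reachable domain, so the envelope does not depend on $u$. Covariance inverses and Cholesky factors are smooth only on positive-definite matrices, with derivatives controlled by inverse eigenvalue bounds. I would therefore require a uniform lower eigenvalue bound on the covariances so that these factors retain polynomial-growth derivatives.
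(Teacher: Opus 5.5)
Your proposal is correct and follows essentially the same route as the paper's proof. Both arguments get smoothness from the polynomial $(d^\top d)^p$, derive the strict norm bound, and use monotonicity of the scalar error on $[0,1]$ and $[1,\infty)$ to place the supremum at $\|d\|=r$. Both then use finite composition for $C^2$ regularity, and a forward induction of polynomial envelopes in the base noise that finite moments and dominated convergence make integrable. One detail is worth stating explicitly: take the polynomial bounds uniformly on a bounded, possibly shrunken, action neighborhood, as the paper does. The same polynomial bound also supplies the zeroth-order envelope $\sup_U|\mathscr C_t^\rho|$ that Proposition~\ref{prop:clderivatives} requires alongside the derivative envelopes.
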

\begin{proof}
Since $(\|d\|/r)^{2p}=(d^\top d)^p/r^{2p}$ is a polynomial and the denominator is positive, the map is smooth even at $d=0$. Put $x=\|d\|/r$ and $h_p(x)=x(1+x^{2p})^{-1/(2p)}$. Then $h_p(x)<1$ for finite $x$ and
$h_p'(x)=(1+x^{2p})^{-1-1/(2p)}$.
The normalized approximation error is $x-h_p(x)$ for $x\leq1$ and $1-h_p(x)$ for $x\geq1$. It increases on the first interval and decreases on the second, proving \eqref{eq:smoothcaperror} at $x=1$.
Finite composition preserves $C^2$ regularity. Under the stated polynomial conditions, induction over the finite rollout bounds the states and their first two action sensitivities by a polynomial in the finitely many base-noise magnitudes, uniformly on a smaller bounded action neighborhood. The same holds for the cost derivatives. Finite noise moments make these envelopes integrable, and conditional dominated convergence gives the two pathwise identities.
\end{proof}

At $p=1$, \eqref{eq:smoothcapfamily} is the alternative $d/\sqrt{1+\|d\|^2/r^2}$; at $\|d\|=r$ it returns norm $r/\sqrt2$. Larger fixed $p$ approximates the hard cap more closely, but the lemma gives no uniform second-derivative bound as $p\to\infty$. Uniform action-map convergence therefore does not justify taking a hard-cap limit inside the Hessian identity. Nor does it by itself bound full-policy cost differences.

All safeguards in the evaluated future state-to-action map must satisfy the regularity conditions. For a box $[\ell_j,h_j]$, one smooth feasible coordinate map is
$S_j(v)=c_j+a_j\tanh((v-c_j)/a_j)$, where $c_j=(h_j+\ell_j)/2$ and $a_j=(h_j-\ell_j)/2>0$. It takes values strictly inside the box and changes actions relative to hard clipping. A $C^2$ saturation with constant exterior pieces is another possible design if exact endpoint values are needed. Merely smoothing the radial cap while retaining a hard final action projection does not satisfy the lemma.

The node QP, acceptance decision and regression procedure are not differentiated through: their completed coefficients are fixed when an earlier action is varied. Their own smoothness is therefore unnecessary for this lemma. In contrast, the smooth map must be the one used both in future rollouts and in deployment. Changing it requires rebuilding the backward policy; differentiating a smooth surrogate while deploying a hard map introduces a different-continuation error. Section~\ref{app:nlregime} evaluates the $p=1$ smooth cap in a controlled non-LQG example. The main learned-controller experiments retain hard safeguards, and the sufficient growth and covariance conditions have not been certified for those tasks. The policy-comparison identities remain valid for the existing nonsmooth policies under their own integrability assumptions.

\paragraph{Accuracy and regularity of the deployment cap.}
If $\norm{\pi_t^{\theta,\star}(s)-\pi_t^0(s)}\le r$, nonexpansiveness of projection implies
$\norm{\pi_t^0(s)+\mathcal C_r(f(s)-\pi_t^0(s))-\pi_t^{\theta,\star}(s)}\le\norm{f(s)-\pi_t^{\theta,\star}(s)}$.
This controls action values, not policy derivatives. For a smooth residual $d(s)=f(s)-\pi_t^0(s)$, a transverse crossing of $\norm d=r$ has a derivative jump $-nn^\top Dd$, with $n=d/r$. Moreover, a bounded baseline and finite Gaussian-kernel sum cannot make a nonconstant polynomial residual uniformly bounded along an unbounded state ray. A global inactivity claim therefore needs an explicit domain and coefficient argument.

For the smooth family \eqref{eq:smoothcapfamily}, let $d=f-\pi_t^0$ and suppose $\norm d_{2,1}\le E\le r$. Then the deployment term in Theorem~\ref{thm:nonlinearbackward} satisfies
\begin{equation}
 \eta_{\rm dep,t}=\norm{\mathcal C_{r,p}(d)-d}_{2,v_t}
 \le(4p+6)r^{-2p}E^{2p}\norm d_{2,v_t}.
 \label{eq:nlsmoothcapaccuracy}
\end{equation}
For completeness, $R(x)=\mathcal C_{r,p}(x)-x$ on $\norm x\le r$ obeys
$\norm R\le r^{-2p}\norm x^{2p+1}/(2p)$,
$\norm{DR}\le(1+1/(2p))r^{-2p}\norm x^{2p}$, and
$\norm{D^2R}\le(4p+2)r^{-2p}\norm x^{2p-1}$.
The chain rule for $R\circ d$, bounding one factor $Dd$ by $E$, gives \eqref{eq:nlsmoothcapaccuracy}. Thus a fixed smooth cap has higher-order distortion for uniformly small residuals. Weighted closeness on an unbounded domain alone does not imply the unweighted condition $\norm d_{2,1}\le E$, and a hard-cap crossing does not satisfy this smooth argument.

\subsection{Exact-belief conditions}\label{app:beliefconditions}
\paragraph{Information.}
New predictor information about the latent state must enter the likelihood or satisfy Assumption~\ref{ass:context}; a current outcome cannot predict itself before observation.
\paragraph{State.}
Recurrent predictor states and random future covariates require their own retained states and transition laws. Filtering $X_t$ alone does not supply these omitted dynamics.
\paragraph{Improvement.}
Exact filtering, derivatives, local optimization, and action-value decrease are separate requirements. Theorems~\ref{thm:improve} and~\ref{thm:backward} also require their acceptance/model conditions; regressed policies retain the interpolation term in Corollary~\ref{cor:backwarddefect}.

\clearpage
\section{A controlled non-LQG quadratic-accuracy regime}\label{app:nlregime}
Figure~\ref{fig:nlregime} displays the error curves underlying Table~\ref{tab:nlregime}.

This experiment connects the correction, storage, and smooth deployment operations in Corollary~\ref{cor:nonlinearquadratic}. The reference policy is perturbed by a fixed smooth direction, and the deployed future map is used in the preceding-date calculation.

\paragraph{Problem and reference.}
There are two scalar decisions, with unrestricted actions,
\begin{align}
 s_1&=\tanh(s_0)+u_0+0.4\xi,\qquad \xi\sim\N(0,1),\nonumber\\
 \ell_0(s_0,u_0)&=0.35u_0^2,\qquad
 \ell_1(s_1,u_1)=\tfrac12(s_1+u_1)^2+\tfrac14u_1^4 .
 \label{eq:nlregimemodel}
\end{align}
The last optimum is the unique root of $u^3+u+s=0$:
\[
 \pi_1^\star(s)=\frac{2}{\sqrt3}
 \sinh\!\left[\frac13\operatorname{arsinh}(-3\sqrt3s/2)\right].
\]
Writing $p=\pi_1^\star(s)$ gives $(V_1^\star)'=s+p$ and
$(V_1^\star)''=3p^2/(1+3p^2)\in[0,1)$, so
$0.7\le Q_{0,uu}^\star\le1.7$. We solve the first-date stationarity equation using 256-point Gauss--Hermite quadrature; reference state derivatives follow by implicit differentiation.

\paragraph{Backward construction and storage.}
Set $\pi_t^{0,\epsilon}=\pi_t^\star+\epsilon h_t$, where
$h_0(s)=0.5+\sin s$, $h_1(s)=\sin s$, and
$\epsilon\in\{0.2,0.1,0.05,0.025,0.0125,0.00625\}$.
At each date, one Newton step uses the gradient and Hessian of the action value under the actual stored future policy. The proposal displacement is transformed by
$\mathcal C_{1,1}(d)=d/\sqrt{1+d^2}$.
All construction-node targets pass the conditional-cost comparison within $10^{-12}$ numerical tolerance.
Storage uses 101 equally spaced nodes on $[-1.5,1.5]$ at date zero and 161 on $[-5,5]$ at date one. We apply the same smooth cap to the fitted displacement from the baseline before deployment and before evaluating the earlier continuation.

The principal comparison fits the residual target with the single feature $h_t(s)$, minimizing
$n^{-1}\sum_i(c h_t(s_i)-[y_i-\pi_t^{0,\epsilon}(s_i)])^2+\lambda c^2$.
The first-order correction direction is therefore representable. We compare $\lambda=\epsilon$ with fixed $\lambda=0.05$.
A separate direct-action diagnostic uses features $[\pi_t^\star,h_t]$, with ridge $\lambda=\epsilon^2$ or fixed $\lambda=0.01$. Including the reference-policy feature removes its dictionary approximation error and isolates direct-ridge bias; this diagnostic is not the thrust storage dictionary. Both fits use the normalized penalty convention above.

\paragraph{Measurements.}
For $v(s)=1+s^2$, we measure
\begin{equation}
 E_t^{\rm grid}=\sum_{k=0}^2\frac1{k!}
 \max_{s\in G_t}\frac{|D^k(\pi_t-\pi_t^\star)(s)|}{1+s^2},
 \label{eq:nlregimegrid}
\end{equation}
separately for the baseline, correction target, raw fit, and deployed map. The evaluation grids have 181 points on $[-1.5,1.5]$ and 241 on $[-5,5]$. These sampled compact-domain errors do not bound the global weighted supremum in \eqref{eq:nlpolicynorm}. The continuation diagnostic is
$\max_{s\in G_0}|Q_{0,u}^{\bar\pi_1}(s,\pi_0^\star(s))|/(1+s^2)$; the optimal-tail gradient vanishes.

\begin{figure}[htbp]
\centering
\includegraphics[width=\textwidth]{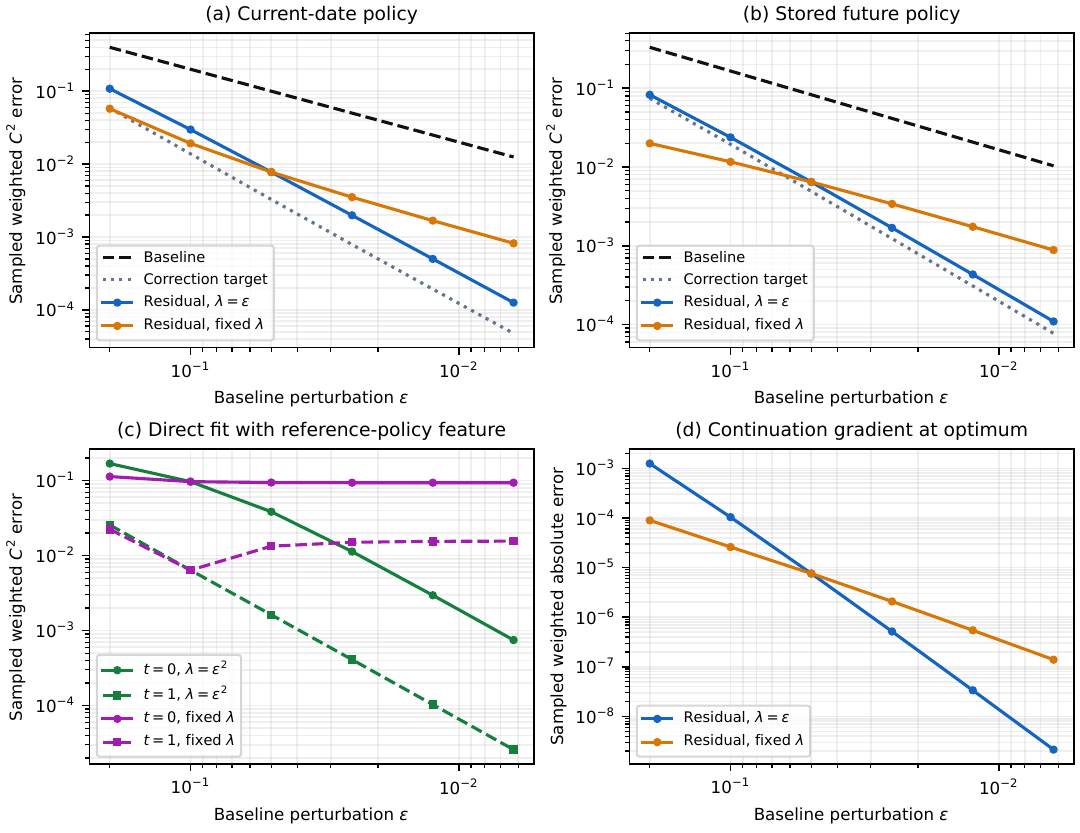}
\caption{Controlled non-LQG order measurements. Panels (a)--(b) separate baseline, target, and deployed residual-policy errors. Panel (c) isolates direct-ridge bias using a dictionary containing the reference policy. Panel (d) uses the actual stored, smooth deployed tail. Errors are sampled quantities defined in the text; smaller perturbations are to the right.}
\label{fig:nlregime}
\end{figure}

\paragraph{Observed regime.}
Table~\ref{tab:nlregime} gives orders over the final halving
$0.0125\to0.00625$; all halving intervals appear in the numerical archive.
Decreasing residual ridge yields approximately second-order deployed policies at both dates and fourth-order continuation-gradient error.
The individual action, first-derivative, and second-derivative errors also have orders 1.98--1.99. The deployment cap's difference from the raw residual fit has order 2.95--2.96, consistent with the higher-order distortion in \eqref{eq:nlsmoothcapaccuracy}.
At $\epsilon=0.00625$, date-zero target and deployed errors are
$4.76\times10^{-5}$ and $1.26\times10^{-4}$; the corresponding date-one errors are $7.72\times10^{-5}$ and $1.09\times10^{-4}$.

\begin{table}[htbp]
\centering\small
\caption{Final-halving orders of sampled errors. A value near two denotes quadratic accuracy; near zero denotes a plateau. Direct fits include the reference policy as a feature.}
\label{tab:nlregime}
\begin{tabular}{lrrrr}
\toprule
Storage & Target $t=0$ & Deployed $t=0$ & Deployed $t=1$ & Tail gradient\\
\midrule
Residual, $\lambda=\epsilon$ & 2.017 & 1.993 & 1.983 & 3.970\\
Residual, fixed $\lambda$ & 2.018 & 1.033 & 0.982 & 1.966\\
Direct, $\lambda=\epsilon^2$ & 2.018 & 1.984 & 1.995 & 4.022\\
Direct, fixed $\lambda$ & 0.450 & $-0.0001$ & $-0.010$ & $-0.022$\\
\bottomrule
\end{tabular}
\end{table}

Fixed residual ridge leaves an approximately first-order bias, rather than a nonzero absolute floor. Fixed direct ridge has a floor: even at $\epsilon=0$, deployed errors are 0.09374 and 0.01562 at dates zero and one. Its nonvanishing future-policy error also limits the preceding target.
The current Newton target remains second order even with a first-order residual tail; backward refresh reduces the continuation contribution, without generically increasing the full current-step order beyond two.

\paragraph{Numerical checks and scope.}
Changing from 192 to 256 quadrature points changes reference actions by at most $2.13\times10^{-14}$ and their first two derivatives by $5.75\times10^{-11}$.
At $\epsilon=0.0125$ with decreasing residual ridge, the date-zero target's sampled $C^2$ error changes by $9.29\times10^{-13}$; doubling evaluation-grid density changes the principal measurements by at most $6.0\times10^{-5}$ relatively.
Analytic action derivatives and implicit reference-policy derivatives agree with independent automatic differentiation within $2.8\times10^{-17}$ and $1.9\times10^{-16}$, respectively.
The minimum date-zero construction curvature is above 0.8968.

This is a local perturbation experiment with a baseline constructed from the reference policy, a representable residual direction, and deterministic quadrature. It illustrates the consistent regime of Corollary~\ref{cor:nonlinearquadratic}, including actual smooth deployment, while leaving global derivative envelopes and the learned-controller conditions unverified. The four storage settings are diagnostics of order and bias, not performance comparisons of learned actors. The numerical archive
\texttt{nonlinear\_quadratic\_regime\_20260928}
contains the implementation, per-component errors, and quadrature/grid checks.

\clearpage
\sloppy
\noindent{\Large\bfseries Online Supplement}\par\medskip
\counterwithout{table}{section}
\counterwithout{figure}{section}
\setcounter{section}{0}
\setcounter{table}{0}
\setcounter{figure}{0}
\renewcommand{\theHsection}{supp.\arabic{section}}
\renewcommand{\theHtable}{supp.\arabic{section}.\arabic{table}}
\renewcommand{\theHfigure}{supp.\arabic{section}.\arabic{figure}}
\renewcommand{\theHequation}{supp.\arabic{section}.\arabic{equation}}

\renewcommand{\thesection}{S\arabic{section}}
\renewcommand{\thetable}{S\arabic{table}}
\renewcommand{\thefigure}{S\arabic{figure}}
\section*{Reading guide}
This supplement provides the experimental details needed to interpret the main paper. Section~\ref{sup:completeexperiments} specifies the task models, fitting data, controller budgets, and paired evaluation. Section~\ref{sup:regressionrefit} isolates policy storage through target validation and fresh episode panels. Section~\ref{sup:constructionprotocol} documents independent OLF solver checks, restricted feedback-MPC student construction, and the matched policy-storage comparison in Section~\ref{sup:matchedstorage}. Section~\ref{sup:currentnative} gives current native deployment measurements and local numerical checks. Section~\ref{sup:scalardp} reports the separate scalar docking DP reference and regression diagnostic. Section~\ref{sup:lqgpanel} preserves the full quadratic-thrust reference panel. The Technical Appendix contains the proofs, safeguard analysis, and controlled non-LQG accuracy experiment (Section~\ref{app:nlregime}).

\section{Core experimental protocols}\label{sup:completeexperiments}
All tasks use stochastic partially observed models fitted from observations. Filtering is exact within the fitted Gaussian families except for the arm's approximate EKF. The docking protocol in Section~\ref{sup:dockingprotocol} tests information acquisition, thrust in Sections~\ref{sec:rocketprotocol}--\ref{sup:thrustresults} supplies exact LQG and nonquadratic comparisons, and the arm in Section~\ref{sup:robotprotocol} tests nonlinear dynamics. Independent OLF MPC optimizes future action sequences, whereas restricted feedback MPC optimizes belief-dependent future actions; neither uses DPO continuation.

Policy costs use held-out paired episodes, with each controller following its own action and observation history. Common phases and exogenous noise pair methods. Fitting, target construction, and model selection use the fitted model; the plant supplies observation data and held-out evaluation. Unless stated otherwise, intervals are descriptive, unadjusted 95\% Student-$t$ intervals over eight fitted-model seed means. Episodes and antithetic pairs do not count as independent fits. Feedback-MPC students in Section~\ref{sup:feedbackdistill} use the same model seeds as the reported BG policies but different policy representations; Section~\ref{sup:matchedstorage} repeats the comparison with matched representation families.

\paragraph{AI-assisted implementation.}\label{sup:aidevelopment}
ChatGPT/Codex (OpenAI) assisted with implementing and debugging simulation, policy-construction, evaluation, and plotting code under the author's direction. The reported numerical results and plots derive from computational runs and saved outputs using the models, algorithms, and evaluation protocols specified in this supplement. ChatGPT/Codex and Claude (Anthropic) also supported critical examination of ideas and mathematical arguments and manuscript editing; the mathematical claims are justified by the stated assumptions and proofs in the main paper and Technical Appendix.

\subsection{Active-sensing docking}\label{sup:dockingprotocol}
\paragraph{Task and protocol.}
This thrust-control variant has a 12-decision Gaussian state $X_t=(p_t,v_t,w_t)$ (position, velocity, and hidden disturbance). The two continuous controls are normalized thrust $f_t\in[-1,1]$ and sensor power $a_t\in[0,1]$. Thrust changes the state drift; sensor power reduces the variance of the \emph{next} position observation:
\begin{equation}
\operatorname{Var}(Y_{t+1}\mid X_{t+1},a_t)
=c_t^2\left(r_{\rm floor}^2+
\frac{r_{\rm base}^2}{1+\kappa a_t}\right),\qquad
c_t=\begin{cases}
10,&\text{late-noise condition and }t\geq6,\\
1,&\text{otherwise}.
\end{cases}
\label{eq:dockingsensing}
\end{equation}
All methods know the noise schedule and use it in fitting/simulation. Costs cover position, velocity, thrust, and sensing; success requires $|p_T|\leq0.25$, $|v_T|\leq0.35$. Uncertainty reduction earns no separate reward. The last sensor action is fixed at zero because no later decision uses its observation. Each of eight models is fitted by Gaussian observation likelihood to 8,192 randomized-action trajectories without latent labels. DPO uses 1,600 updates; BG uses 256 nodes/date, 128 derivative paths, and 256 independent acceptance paths. Evaluation uses 512 paired episodes per seed with each controller's own belief history.

\paragraph{Independent planning comparisons.}
OLF-MPC (L-BFGS) and OLF-MPC (CEM) optimize future thrust sequences and replan after observations, without DPO initialization or continuation. Their fixed future thrust cannot benefit from future observations, so sensing only adds expense; its optimal value is zero and is eliminated analytically. Replanning does not restore that omitted ex ante information value. Sensing changes future estimation uncertainty: the separate sensor input buys state information while model parameters remain fixed. Independent feedback MPC uses PyTorch L-BFGS on a sample-average objective through simulated observations/filter updates. It optimizes datewise sigmoid-parameterized sensing and tanh-transformed affine belief-mean thrust feedback, not an exact POMDP solution. Table~\ref{tab:compactdocking} in the main paper reports BG policies stored by physical-action fitting; Section~\ref{sup:feedbackdistill} documents the separately trained feedback-MPC students.

A separately validated acados SQP solves the same zero-sensing OLF objective with 128 planning paths. All 98,304 decisions across both regimes and eight seeds return successful solver status. Its closed-loop costs, 4.5675 and 15.1203, remain close to the L-BFGS and CEM OLF results. Faster solution of this objective does not recover the omitted value of future information. The numerical belief-space DP in Section~\ref{sup:scalardp} concerns a separate scalar companion, not this three-state task.

\subsection{Thrust identification and comparison protocol}\label{sec:rocketprotocol}
The planar thrust model has latent state $X_t=(p_t,v_t,w_t)\in\R^6$ and two continuous commands $u_t\in\R^2$. Its fitted equations are
\begin{align}
p_{t+1}&=p_t+\Delta v_t+\tfrac12\Delta^2(G_\theta u_t+w_t),\nonumber\\
v_{t+1}&=v_t+\Delta(G_\theta u_t+w_t),\nonumber\\
w_{t+1}&=\diag(\rho_\theta)w_t+f_\theta(C_t)
 +\diag(\sigma_{w,\theta})\xi^X_{t+1},\nonumber\\
Y_{t+1}&=p_{t+1}+\diag(\gamma_\theta(C_t))w_{t+1}
 +\diag(\sigma_{y,\theta})\nu_{t+1}.
\label{eq:rocketmainmodel}
\end{align}
Here $\Delta=0.25$, $T=16$, and $C_t=(\sin(\varphi+0.43t),\cos(\varphi+0.43t))$ has random initial phase. The kinematics and initial Gaussian prior are supplied. Observation likelihood learns $G_\theta$, wind persistence, process/observation noise scales, and the context maps $f_\theta,\gamma_\theta$, supplied by a shared network with two 24-unit tanh layers. The plant's $G_\star=\left(\begin{smallmatrix}1&0.35\\0.25&0.75\end{smallmatrix}\right)$ is withheld from fitting and policy construction.

Each of eight seeds supplies 2,048 randomized-action observation trajectories for fitting, 320 for validation, and 512 for predictive testing. Fitting uses 800 Adam updates (batch 128, learning rate 0.006), selecting validation checkpoints every 25 updates. Only contexts, actions, and noisy observations are used. The same eight identified-model checkpoints serve both effort conditions and objectives.

Controllers use the fitted posterior $\N(m_t,P_t)$. Initial uncertainty, process noise, and observation noise are present in the plant and the fitted model. The fitted Kalman recursion is exact, but its accuracy for the plant is evaluated empirically. Here phase and time determine $P_t$ independently of commands, so thrust tests passive belief feedback. Conditional on context, its dynamics remain linear-Gaussian under either objective.

The quadratic stage and terminal costs are
$\ell_{\rm q}(X,u)=\tfrac12(X^\top Q_cX+u^\top R_cu)$ and
$\Phi_{\rm q}(X)=\tfrac12X^\top Q_fX$, with
\begin{align*}
Q_c&=\diag(0.10,0.10,0.03,0.03,0.005,0.005),\\
Q_f&=\diag(6,6,2,2,0.02,0.02),\\
R_c^{\rm mild}&=\diag(0.12,0.12),\qquad
R_c^{\rm aniso}=\diag(0.02,0.50).
\end{align*}
These are different effort costs in fixed physical coordinates, not a change of units. The nonquadratic objective adds smooth position and coupled-command penalties:
\begin{align}
\ell_{\rm nq}(X,u)&=\ell_{\rm q}(X,u)
 +0.025(p_1^4+p_2^4)+0.015(u_1+0.5u_2)^4,\nonumber\\
\Phi_{\rm nq}(X)&=\Phi_{\rm q}(X)+0.03(p_1^4+p_2^4).
\label{eq:rocketnonquadratic}
\end{align}
For each objective and frozen model, a two-layer, 48-unit tanh DPO actor maps $(m_t,\diag(P_t),C_t,t/T)$ to unbounded commands. Phase and time determine the full covariance schedule. Training uses 600 Adam updates (batch 128, learning rate 0.003); DPO+150 continues the optimizer. Model and actor gradient norms are clipped at 5.

\paragraph{BG construction and independent comparators.}
For thrust, each date uses 128 DPO-rollout belief nodes, perturbed by 0.2 times coordinate-wise empirical standard deviations with a 0.05 scale floor. Corrections use 128 antithetic derivative paths, 256 independent antithetic acceptance paths, Hessian floor 0.01, and displacement cap 2. Static BG uses DPO continuation; backward BG uses the fitted later-date corrections. Their nodes, random numbers, features, and safeguards are matched. LQG features are affine in $m_t$ with phase harmonics; nonquadratic features add selected squared, cubic, and cross terms. Ridge penalties are 0.001 and 0.01, respectively. Nonquadratic deployment also caps DPO-relative displacement at 2. Execution evaluates the fitted map without a new acceptance test, so construction-node acceptance does not certify off-node improvement.

OLF-MPC (L-BFGS) and OLF-MPC (CEM) implement OLF control: they optimize the entire remaining open-loop command sequence, execute its first action, and replan with their own shifted previous plans. They use no DPO continuation, initialization, action ball, or acceptance test. L-BFGS allows 80 iterations; CEM uses full sequence covariance, 512 candidates, and 24 generations. Exact Gaussian quadratic/fourth moments remove objective Monte Carlo noise; LQG action-independent constants are omitted. For nonquadratic thrust, restricted feedback MPC instead optimizes datewise affine belief-mean policies by PyTorch L-BFGS with exact Gaussian moments, without DPO/BG actions, continuation, or initialization. It uses the same frozen model and evaluation paths, executing its first action and replanning after each observation. These task-specific planners have different budgets and are neither exact nonlinear feedback oracles nor reproductions of PETS, POPLIN, or belief-space iLQG/DDP.

\subsection{Thrust reference checks}\label{sup:thrustresults}
Section~\ref{sup:lqgresults} checks fitted-model action accuracy against Riccati feedback. Section~\ref{sup:quarticresults} specifies the controlled continuation comparison after adding quartic costs.
\subsubsection{LQG reference}\label{sup:lqgresults}
Backward Riccati recursion gives the exact fitted-model feedback policy used to test Proposition~\ref{prop:lqgbackwardexact}. The action panel uses common held-out DPO histories at five dates with cold planner starts, while policy costs use each controller's own history. BG reduces fitted-optimal action MSE by more than 99.99\%, to $1.94\times10^{-6}/1.72\times10^{-6}$ in mild/anisotropic conditions. Its plant-optimal MSEs, $0.002029/0.001966$, approach the fitted-model--plant oracle discrepancy; these squared errors do not give an additive error decomposition.

The cached fitted-LQG controller builds float32 condensed kernels, solves selected inverse columns in float64, and stores float32 coefficients. The independent action reference uses float32 Riccati recursion. This rounding explains residual self-errors of $10^{-11}$--$10^{-10}$; neither smaller numerical residuals nor sampled cost reversals indicate improvement on the mathematical optimum.

\subsubsection{Nonquadratic continuation comparison}\label{sup:quarticresults}
The quartic costs in \eqref{eq:rocketnonquadratic} remove the Riccati reference. Static and backward BG share nodes, derivative and acceptance streams, features, and safeguards; only the continuation used to generate targets changes. Section~\ref{sup:regressionrefit} repeats this comparison with matched target-validated representations. Section~\ref{sec:rocketdiagnostics} gives derivative checks; neither numerical MPC nor BG is assumed to be the optimal feedback policy.

\subsection{Nonlinear robot arm}\label{sup:robotprotocol}
\paragraph{Model and task.}
We use a synthetic torque-driven planar two-link arm with state $X=(q,v)\in\R^4$, observed goal $g\in\R^2$, and two bounded controls $u\in[-2,2]^2$. The horizon is $T=16$ with $\Delta=0.14$. The supplied physical family contains coupled inertia, Coriolis, gravity, drag, and actuator terms. Writing its acceleration as $a_\theta(q,v,u)$, the stochastic transition and observation are
\begin{align}
q_{t+1}&=q_t+\Delta v_t+\tfrac12\Delta^2\bigl(a_\theta(q_t,v_t,u_t)+\Sigma_{w,\theta}\xi^X_{t+1}\bigr),\nonumber\\
v_{t+1}&=v_t+\Delta\bigl(a_\theta(q_t,v_t,u_t)+\Sigma_{w,\theta}\xi^X_{t+1}\bigr),\qquad
Y_{t+1}=q_{t+1}+\Sigma_{y,\theta}\nu_{t+1}.
\label{eq:robottransition}
\end{align}
The diagonal noise scales are learned along with physical coefficients. The unit-link fingertip map is $r(q)=(\cos q_1+\cos(q_1+q_2),\sin q_1+\sin(q_1+q_2))$. Initial mean angles are uniform on $[-1.2,1.2]^2$, initial mean velocities are zero, and $P_0=\diag(0.10^2,0.10^2,0.24^2,0.24^2)$. Goals are $r(q^{\rm goal})$ for angles uniform on $[-0.9,0.9]^2$. An initial noisy angle measurement updates this prior. With $e=r(q)-g$, the task costs are
\begin{align}
\ell(x,u,g)&=0.17\|e\|^2+0.025\|v\|^2+0.012u_1^2+0.055u_2^2\nonumber\\
&\quad+0.025\sum_{j=1}^2e_j^4+0.012(u_1+0.65u_2)^4,\nonumber\\
\ell_T(x,g)&=5\|e\|^2+0.35\|v\|^2+0.28\sum_{j=1}^2e_j^4.
\label{eq:robotcost}
\end{align}
Terminal attainment requires $\|e_T\|<0.20$ and $\|v_T\|_\infty<0.35$, a threshold diagnostic distinct from total cost. The arm is an in-house simulator, not a standard Reacher or hardware benchmark.

\paragraph{Learning and comparisons.}
Each of eight seeds fits 13 physical/noise parameters from 2,048 randomized-action observation trajectories, with 512 validation and 512 test trajectories. Fitting uses 380 Adam updates (batch 128) and the EKF Gaussian innovation score, observing only torques, noisy angles, and the prior. Hidden velocities and plant parameters are withheld. This is parameter learning in a supplied nonlinear family. The EKF maintains full covariance; the two-layer, 64-unit DPO actor and regressed policies use means, covariance diagonals, goal, and time. That compressed input need not be sufficient.

DPO uses 500 full-rollout updates (batch 128), with a +150 arm. Both BG variants share 96 DPO nodes/date, 48 derivative paths, and 96 independent acceptance paths; curvature floor, DPO-relative radius, and ridge are $0.02,0.75,0.02$. Residual features include means, goals, fingertip errors, covariance diagonals, squares, and a joint-angle cross term. Backward BG differentiates complete model/EKF rollouts through regressed later policies, retaining state-input derivatives while freezing coefficients; static BG retains DPO. Node acceptance precedes regression, with no learned critic.

The independent OLF controllers OLF-MPC (L-BFGS) and OLF-MPC (CEM) optimize the full remaining bounded open-loop torque sequence using 24 conditional state/process paths; they use, respectively, 30 L-BFGS iterations and eight CEM iterations with 128 candidates. They invoke neither DPO nor BG continuation and replan after every observation, carrying the previous plan between dates. The nonlinear OLF solver has no global-optimality certificate.

Independent feedback MPC first solves an open-loop plan, then uses PyTorch L-BFGS to optimize affine functions of future belief-mean deviations followed by a scaled tanh torque map through model/EKF trajectories. The bounded action law is therefore nonlinear in the belief mean. Its zero-gain ablation removes observation response while retaining the same 24 paths, initial 30 open-loop iterations, and 30 additional iterations. Neither uses DPO/BG. This ablation matches sampling and iteration limits, not time, and tests feedback response beyond an extra solve.

The zero-gain comparison lowers mean plant cost from $6.36050$ to $6.28979$ when future belief response is enabled: a paired difference of $-0.07071$ $[-0.08023,-0.06118]$, with similar attainment. Extra open-loop optimization alone gives an unresolved difference of $-0.02133$ $[-0.05262,+0.00996]$ relative to the original OLF planner. These fixed-budget ablations support a contribution from feedback response without establishing an optimal nonlinear policy. Evaluation uses 256 paired episodes per seed.

\section{Policy storage and target validation}\label{sup:regressionrefit}
The reported backward BG policies store accepted local actions using target-validated maps in thrust and the arm and a physical-action fit in docking. Map choices use fitted-model target-action diagnostics, never plant cost: thrust and arm use construction-node cross-validation, while docking uses a terminal held-out target pilot. These choices were developed after earlier diagnostics, so the study is exploratory. A fresh exogenous-noise evaluation panel reuses the same eight learned-model seeds and is therefore an independent episode check, not a new-model replication.

\paragraph{Nonquadratic thrust.}
The stored policy uses a 27-coefficient polynomial direct-action fit, optionally augmented by a 128-center local RBF correction. Kernel residuals subtract the radius-two-capped polynomial prediction from target actions; the combined map is capped again relative to DPO at deployment. Polynomial ridge is $0.01$ with intercept penalty $10^{-12}$; kernel penalties are selected from $\{0.01,0.1,1\}$. Technical Appendix~\ref{app:nltargetstorage} gives this operator and its storage-error bound. Four-fold target-action cross-validation admits the additional component datewise only when it lowers MSE by at least 5\%; all subsequent BG targets are recomputed using the resulting corrected tail. The construction uses 128 nodes, 128 derivative paths, and 256 independent acceptance paths per date. On the fresh mild/anisotropic panels, backward BG costs $1.606786/2.052751$, versus static BG's $1.609062/2.057187$. Representative seed-0 generated-C date-zero action times are $5.820/5.280\,\mu$s for backward BG and $5.587/5.385\,\mu$s for static BG. Backward construction takes $2.699/2.632$ seconds per fitted model on the local RTX~4070 SUPER, excluding shared model/DPO learning and node generation.

On fresh mild-thrust episodes, the paired backward-minus-static BG cost is $-0.00228$ (interval $[-0.00364,-0.00091]$); on anisotropic episodes it is $-0.00444$ (interval $[-0.00577,-0.00310]$). On the original task panel, the backward-minus-online-feedback-MPC difference is $+0.002213$ for mild thrust (interval $[0.001521,0.002905]$) and $+0.011277$ for anisotropic thrust (interval $[0.007760,0.014793]$). These are exploratory 95\% Student-$t$ intervals across eight paired fitted-model seed means.

\paragraph{Docking.}
The actor retains its 16 features, smooth output map, and action bounds. Policy fitting minimizes deployed physical-action error, with coefficient penalty $10^{-4}$; a terminal-date $2\times2$ ablation attributes most target-MSE reduction to the action loss rather than the penalty. Rebuilding the corrected tail gives backward BG original-panel costs $0.822330/4.025040$ for regular/late-noise docking and fresh-panel costs $0.821157/3.940916$. Original-panel success is $82.57\%/50.68\%$ and fresh-panel success is $82.25\%/49.80\%$. Static BG costs $0.824931/4.031797$ on the original panel. The original-panel backward-minus-static regular cost difference is $-0.002601$ $[-0.005942,+0.000740]$, while the fresh-panel difference is $-0.005105$ $[-0.007677,-0.002534]$. Relative to restricted online feedback MPC on original paths, backward BG is cheaper by $0.01335$ $[0.00455,0.02215]$ in regular docking but costlier by $0.07158$ $[0.01710,0.12606]$ under late noise. These are descriptive eight-seed paired intervals, not comparisons to an optimal belief policy. Static/frozen-target/backward builds average $4.44/5.44/5.27$ seconds per model on the local RTX~4070 SUPER; separate A6000 student times below are not pooled with them. Seed-0 native-C backward actions take $1.389/1.848\,\mu$s at date zero.

\paragraph{Two-link arm.}
Four-fold construction-target cross-validation selects among residual maps and up to 32-center Nystr\"om maps, then the corrected tail is rebuilt. On separate fitted-model target nodes, action MSE is $0.043954$; this measures storage of accepted actions rather than plant performance. Backward BG costs $6.678051$ on the original and $6.688447$ on the fresh episode panel. Original/fresh attainment is $59.23\%/57.08\%$. Static BG costs $6.701478$ with $62.26\%$ attainment on original paths. Restricted online feedback MPC costs $6.289789$. The stored arm maps average 31,065 bytes; backward/static BG date-zero native-C times are $3.837/3.659\,\mu$s over eight seeds. Backward construction takes $9.47$ seconds per model on the local RTX~4070 SUPER. The construction time conditions on the fitted model, trained actor, and prepared nodes, excluding their preparation costs.

\section{Feedback-MPC student construction}\label{sup:constructionprotocol}
The feedback-MPC student benchmark freezes the eight fitted models and DPO actors in each condition. Student labels come from restricted feedback optimization at DPO-visited beliefs, and the fitted policy executes without online planning. Common model learning, actor training, and node generation are excluded from reported policy-construction times. The historical student representations differ from the task-section backward BG maps, so those cost differences are descriptive. Section~\ref{sup:matchedstorage} gives the separate common-storage comparison.

\subsection{Independent OLF solver verification}\label{sup:olfverification}
The nonquadratic-thrust OLF planner independently minimizes exact expected quartic cost from zero, without DPO/BG continuation. It uses cached population objectives and up to four consecutive 80-iteration L-BFGS solves, stopping at maximum absolute gradient $10^{-5}$. All 256 date/seed/condition batches pass, with maximum $9.70\times10^{-6}$. Effort-coefficient bounds put Hessian eigenvalues above $\mu=0.12$ or $0.02$, so $\|\nabla f\|^2/(2\mu)$ bounds suboptimality by at most $1.69\times10^{-8}$. This verifies the OLF solve, not feedback optimality. The arm and docking feedback teachers remain finite-budget local solvers.

\subsection{Restricted feedback-MPC distillation}\label{sup:feedbackdistill}
\paragraph{Historical deployment comparison.}
Table~\ref{tab:feedbackdistillmain} compares the original task-section backward BG policies, independently fitted feedback-MPC students, and online feedback MPC. BG and student storage maps differ, so these costs are descriptive rather than a matched-capacity contrast. The principal common-storage comparison is main-paper Figure~\ref{fig:matchedstorage} and Section~\ref{sup:matchedstorage} below. Online feedback MPC replans along its own visited beliefs, so its gap from a student also includes visitation effects.

\begin{table}[htbp]
\centering\small
\caption{Eight-seed mean plant cost for the task-section backward BG policies, separately constructed feedback-MPC students, and online feedback MPC (lower is better). BG costs match the preceding task tables; student maps differ in capacity. Student teacher budgets are 320 feedback iterations for thrust, 30 projected iterations for the arm, and 60 iterations for docking. Online feedback MPC replans along its own trajectories.}
\label{tab:feedbackdistillmain}
\begin{tabular}{lrrr}
\toprule
Task / condition & Backward BG & Feedback-MPC student & Online feedback MPC\\
\midrule
Thrust, mild & 1.65568 & 1.65656 & 1.65346\\
Thrust, anisotropic & 2.15022 & 2.15654 & 2.13894\\
Robot arm & 6.67805 & 6.73788 & 6.28979\\
Docking, regular & 0.82233 & 0.82867 & 0.83568\\
Docking, late noise & 4.02504 & 4.05233 & 3.95346\\
\bottomrule
\end{tabular}
\end{table}

\paragraph{Frozen inputs and target generation.}
For each condition, the feedback teacher independently optimizes future belief-dependent response rules and labels the first action at each DPO-visited belief. Online feedback MPC instead replans along its own trajectories; its gap from the offline student combines target quality, policy fitting, and visitation effects.

The arm uses 96 nodes per date over 16 dates, 24 conditional paths, and feedback checkpoints at 4, 16, 30, 80, and 320 iterations after an independent 30-iteration OLF initialization. Its DPO-anchored \texttt{RefinedPolicy} has ridge $0.02$ and an action radius of $0.75$; the main comparison uses radius-projected 30-iteration labels, matching the direct online planner's feedback-iteration budget. Evaluation uses 256 paired episodes per seed. Docking uses 256 nodes per date over 12 dates, 128 antithetic planning paths, and a 60-iteration L-BFGS cap. It fits first physical actions into a DPO-anchored 16-feature smooth residual policy with ridge $10^{-4}$ and raw-action radius two. The feedback teacher jointly optimizes sensing and affine posterior-mean thrust. Evaluation uses 512 paired episodes per seed. The feedback teachers have no global optimality or convergence certificates.

\begin{table}[htbp]
\centering\footnotesize
\setlength{\tabcolsep}{3pt}
\caption{Feedback-MPC student benchmark: eight-seed mean plant cost, attainment/success, and mean sensing, with median offline construction seconds on one A6000 per seed. Arm values use the projected 30-iteration student. The online planner has no comparable offline build; its policies are replanned along their own trajectories. Backward BG values are reported in the main task tables with different stored maps.}
\label{tab:feedbackstudentdetails}
\begin{tabular}{llrrrr}
\toprule
Task & Controller & Cost & Success/attain (\%) & Mean sensor & Build (s)\\
\midrule
Arm & Feedback-MPC student & 6.73788 & 56.84 & -- & 57.83\\
 & Online feedback MPC & 6.28979 & 66.02 & -- & --\\
\addlinespace
Docking, regular & Feedback-MPC student & 0.82867 & 81.96 & 0.3322 & 272.27\\
 & Online feedback MPC & 0.83568 & 82.42 & 0.3387 & --\\
\addlinespace
Docking, late noise & Feedback-MPC student & 4.05233 & 50.63 & 0.8332 & 266.10\\
 & Online feedback MPC & 3.95346 & 49.98 & 0.5240 & --\\
\bottomrule
\end{tabular}
\end{table}

Table~\ref{tab:feedbackstudentdetails} separates stored-student performance from online feedback planning. Increasing arm feedback iterations from 30 to 320 changes student cost from $6.73788$ to $6.73866$, while median construction time rises from $57.83$ to $436.23$ seconds. Regular-docking student cost minus online feedback MPC is $-0.00701$ $[-0.01610,+0.00208]$; under late noise it is $+0.09887$ $[+0.05022,+0.14752]$. These are exploratory two-sided 95\% Student-$t$ intervals over eight paired seed means, without multiplicity adjustment. Episodes within a seed are not independent model fits.

\paragraph{Sensing fit and policy-class limits.}
In regular docking, the feedback student's training-node sensing MSE averaged over dates and seeds is $0.0199$, with the residual cap active for $0.0093$ of node actions. Under late noise these quantities are $0.2952$ and $0.3609$. At date three, the late-noise feedback teacher's mean sensing is $0.2084$ while the fitted student's is $0.9962$. This mismatch is consistent with restricted DPO-centered storage but does not establish the cause of the closed-loop cost difference.

\paragraph{Nonquadratic thrust.}
The thrust student uses 128 nodes per date over 16 dates, ridge $0.01$, a DPO-centered radius-two deployment cap, and 256 paired episodes per seed. Restricted affine feedback teachers start from independent OLF solutions with zero future gains and use checkpoints at 4, 16, 80, and 320 feedback iterations. OLF solves serve only as planner initialization; no OLF policy is distilled or reported. Raw and radius-projected feedback-student costs agree at the shown precision.

\begin{table}[htbp]
\centering\small
\setlength{\tabcolsep}{4pt}
\caption{Thrust feedback-MPC students: eight-seed mean plant cost across teacher iteration limits. The task-section backward BG policy is shown for context; it uses a different storage map. Lower is better.}
\label{tab:feedbackthruststudents}
\begin{tabular}{lrrrrr}
\toprule
Condition & Backward BG & Feedback 4 & Feedback 16 & Feedback 80 & Feedback 320\\
\midrule
Mild & 1.655677 & 1.738798 & 1.742606 & 1.656518 & 1.656558\\
Anisotropic & 2.150216 & 2.245274 & 2.219976 & 2.156664 & 2.156543\\
\bottomrule
\end{tabular}
\end{table}

Table~\ref{tab:feedbackthruststudents} shows large gains between the 16- and 80-iteration teacher checkpoints, with little further change at 320. Median A6000 construction times for feedback-80/feedback-320 are $39.42/139.78$ seconds (mild) and $39.66/127.20$ seconds (anisotropic). One anisotropic seed had an archived path-replay discrepancy of $1.37\times10^{-4}$ after server transfer; its paired-path mean discrepancy was about $1.4\times10^{-6}$. It was retained after a documented numerical-tolerance recovery. These student costs are near the corresponding task-section BG means, but the different policy maps prevent an isolated target-quality comparison.

\subsection{Matched policy-storage comparison}\label{sup:matchedstorage}
The task-section policies and the points in Figure~\ref{fig:introcostlatency} use their originally selected maps. To examine how map capacity affects the comparison, a separate $2\times2$ experiment crosses target construction (backward BG versus feedback-MPC teacher actions fitted into a student) with a compact or richer shared map family. The eight fitted models, DPO actors, DPO-visited nodes, action transformations and limits, fitting loss and ridge penalty, and target-validation rule where applicable are held fixed within each task and map family. Only the feedback side distills teacher actions: BG directly regresses its accepted local actions. Each BG policy is rebuilt backward at each map capacity using its own stored later-date policy, so the larger map changes both storage and subsequent continuation targets. The richer thrust and arm maps share a candidate family and selection rule across methods, although target validation can select different realized features. Thus the comparison controls a major representation asymmetry but does not isolate the causal effect of target construction alone.

Thrust compares a 27-coefficient polynomial map with an optional 128-center local RBF residual, admitted datewise by four-fold target-action validation after at least 5\% MSE improvement; ridge is $0.01$ and the DPO-relative action radius is two. Its archived feedback-320 teacher labels are held fixed for both student maps. The arm compares a 20-feature residual with a family allowing up to 32 Nystr\"om centers, selected by four-fold validation after at least 2\% improvement; ridge is $0.02$ and the action radius is $0.75$. Its teacher uses 30 OLF-initialization and 30 feedback iterations. Docking compares the fixed 16-feature map with the same map plus 32 Gaussian RBF features; both methods fit physical actions with ridge $10^{-4}$ and the same action transform and raw-action radius two. Its feedback teacher uses 60 L-BFGS iterations. Arm and docking teachers were recomputed locally; the two student maps in each condition use identical teacher node actions. The methods retain their respective finite target-generation budgets and have no global optimality certificates.

\begin{table}[htbp]
\centering\footnotesize
\setlength{\tabcolsep}{3pt}
\caption{Matched policy-storage comparison on held-out plant episodes. Each entry is a mean over eight fitted-model seeds; $\Delta$ is feedback-MPC student minus backward BG within the same map family, so positive values favor BG. Brackets give exploratory, unadjusted paired-seed 95\% Student-$t$ intervals. Thrust uses a separate 512-episode panel and rebuilt maps, as detailed below. Arm entries use evaluation stream 12; stream 22 is reported in the text.}
\label{tab:matchedstorage}
\begin{tabular}{llrrl}
\toprule
Task / condition & Map & BG cost & Student cost & $\Delta$ [95\% interval]\\
\midrule
Thrust, mild & Compact & 1.629484 & 1.629304 & $-0.000180$ [$-0.000419,+0.000059$]\\
 & Rich & 1.627984 & 1.627820 & $-0.000164$ [$-0.000405,+0.000078$]\\
Thrust, anisotropic & Compact & 2.086755 & 2.087783 & $+0.001027$ [$-0.003374,+0.005429$]\\
 & Rich & 2.083432 & 2.084063 & $+0.000631$ [$-0.002964,+0.004227$]\\
\addlinespace
Arm & Compact & 6.714405 & 6.737905 & $+0.023501$ [$+0.014112,+0.032889$]\\
 & Rich & 6.678051 & 6.687144 & $+0.009093$ [$-0.000731,+0.018918$]\\
\addlinespace
Docking, regular & Compact & 0.822330 & 0.828730 & $+0.006401$ [$+0.001554,+0.011248$]\\
 & Rich & 0.818491 & 0.820978 & $+0.002487$ [$-0.001124,+0.006099$]\\
Docking, late noise & Compact & 4.025040 & 4.050587 & $+0.025547$ [$+0.011568,+0.039526$]\\
 & Rich & 4.027685 & 4.059673 & $+0.031989$ [$+0.008778,+0.055199$]\\
\bottomrule
\end{tabular}
\end{table}

\paragraph{Arm teacher projection and student storage.}\label{sup:armstoragechain}
The retained matched-comparison logs separate the feedback teacher's raw action, its projection into the DPO-centered radius-$0.75$ action ball, and the deployed student prediction. Across eight seeds, projection affects $6.62\%$ of the construction actions (seed-mean range $4.17$--$8.20\%$). Raw-teacher versus DPO action MSE is $0.113911$, and raw-teacher versus projected-label MSE is $0.018848$. Relative to the projected labels, deployed compact/rich student MSE is $0.031271/0.024672$. Each squared error averages over action components and 96 DPO-visited nodes per date, then 16 dates and eight seeds. These are in-sample diagnostics of teacher projection and storage, not optimal-action errors or an additive decomposition of plant-cost loss. They differ from the BG accepted-target diagnostic on separate fitted-model nodes in Section~\ref{sup:regressionrefit}. The remaining planner gap motivates joint examination of storage and visitation mismatch; these logs do not identify their relative contributions.

\paragraph{Thrust policy and evaluation provenance.}
The task-section thrust table uses target-validated maps on 256 episodes per seed; the fresh-panel storage diagnostic reevaluates those same maps on another 256 episodes. The matched comparison rebuilds compact/rich BG maps and refits both students from the fixed feedback-320 labels, then evaluates all four maps on 512 new paired episodes. For model seed $s$, add $10{,}000s$ to each component of the base phase/noise seed pairs: $(7{,}100{,}000,7{,}100{,}001)$ for the task panel, $(7{,}500{,}000,7{,}500{,}001)$ for the fresh diagnostic, and $(8{,}500{,}000,8{,}500{,}001)$ for the matched panel. Checkpoint and input hashes link each panel to its policy records in the reproducibility archive. Cross-panel absolute cost differences mix evaluation and, for the matched study, policy changes; method contrasts use paired paths within each panel.

The rich-minus-compact change in $\Delta$ is $+0.000017$ [$-0.000154,+0.000187$] in mild thrust and $-0.000396$ [$-0.001403,+0.000611$] in anisotropic thrust. The matched-map thrust costs therefore establish neither BG superiority nor equivalence. In the arm, this change is $-0.014407$ [$-0.019118,-0.009697$] on stream 12. An independent 256-path evaluation stream on the \emph{same eight models} gives compact/rich differences $+0.026307$ [$+0.014936,+0.037677$] and $+0.010736$ [$+0.002632,+0.018841$], with a change of $-0.015570$ [$-0.023032,-0.008108$]. Rich-map student attainment remains below BG by $2.25/2.44$ percentage points across the two streams. The second stream is an episode replication, not another set of model fits.

In regular docking, the rich map reduces $\Delta$ from $+0.006401$ to $+0.002487$, a change of $-0.003913$ [$-0.006951,-0.000876$]. Its student cost changes by $-0.007752$ [$-0.013294,-0.002210$], while BG changes by $-0.003839$ [$-0.009460,+0.001782$]. Under late noise, BG has lower cost with both maps, but the change in $\Delta$ is $+0.006442$ [$-0.007282,+0.020166$]; its direction is unresolved. The new compact docking BG exactly reproduces the earlier physical-action-fit policy on all evaluated paths. The compact student was refitted after recomputing teacher actions on an RTX 4070 SUPER and is not bitwise identical to the earlier A6000 student; the earlier raw teacher actions were not retained. This distinction does not affect matching within the new $2\times2$ experiment.

Thrust and docking use 512 paired plant episodes per seed; each arm stream uses 256. Controllers follow their own closed-loop histories under common exogenous noise. Paired differences and their intervals treat the eight model/DPO seeds, not episodes, as independent units. Map selection never uses held-out plant cost. These intervals are descriptive and unadjusted for multiple comparisons. Neither a zero-containing interval nor the capacity interaction certifies policy equivalence or a pure representation-only effect.

\begin{table}[htbp]
\centering\small
\setlength{\tabcolsep}{5pt}
\caption{Additional offline construction for rich-map policies on one local RTX 4070 SUPER, mean seconds per model seed. The teacher labels are generated once and shared across both student map fits. Shared model/DPO learning, node preparation, held-out evaluation, and online action selection are excluded.}
\label{tab:matchedstoragebuild}
\begin{tabular}{lrrrr}
\toprule
Task / condition & Rich BG & Teacher & Rich student fit & Teacher + fit\\
\midrule
Arm & 9.343 & 80.624 & 2.151 & 82.776\\
Docking, regular & 5.085 & 309.602 & 3.791 & 313.393\\
Docking, late noise & 4.985 & 294.505 & 3.656 & 298.162\\
\bottomrule
\end{tabular}
\end{table}

Table~\ref{tab:matchedstoragebuild} gives teacher-plus-fit/BG time ratios of $8.86$, $61.63$, and $59.81$ for the arm, regular docking, and late-noise docking. These compare the tested implementations and finite budgets at their achieved costs; they do not estimate the minimum computation needed to attain a common accuracy. Native export and exploratory tuning are also excluded. Thrust feedback-320 teacher actions were reused from an A6000 archive whereas the matched map builds ran on the RTX 4070 SUPER; their times are not added into a same-device total. The native execution points in main-paper Figure~\ref{fig:introcostlatency} and the historical student and stage accounting in Figure~\ref{fig:historicaldeployment} refer to the original policies, not the matched-map policies in Table~\ref{tab:matchedstorage}.

\section{Numerical and deployment audits}\label{sup:currentnative}
Section~\ref{sup:nativekernels} checks native policy execution and solver status, while Section~\ref{sec:rocketdiagnostics} reports derivative checks and their numerical limits. Neither certifies global feedback optimality. The independent OLF convergence check is given in Section~\ref{sup:olfverification}.

\subsection{Historical deployment and offline stage accounting}\label{sup:offlineaccounting}
\begin{figure}[!htbp]
\centering
\includegraphics[width=\linewidth]{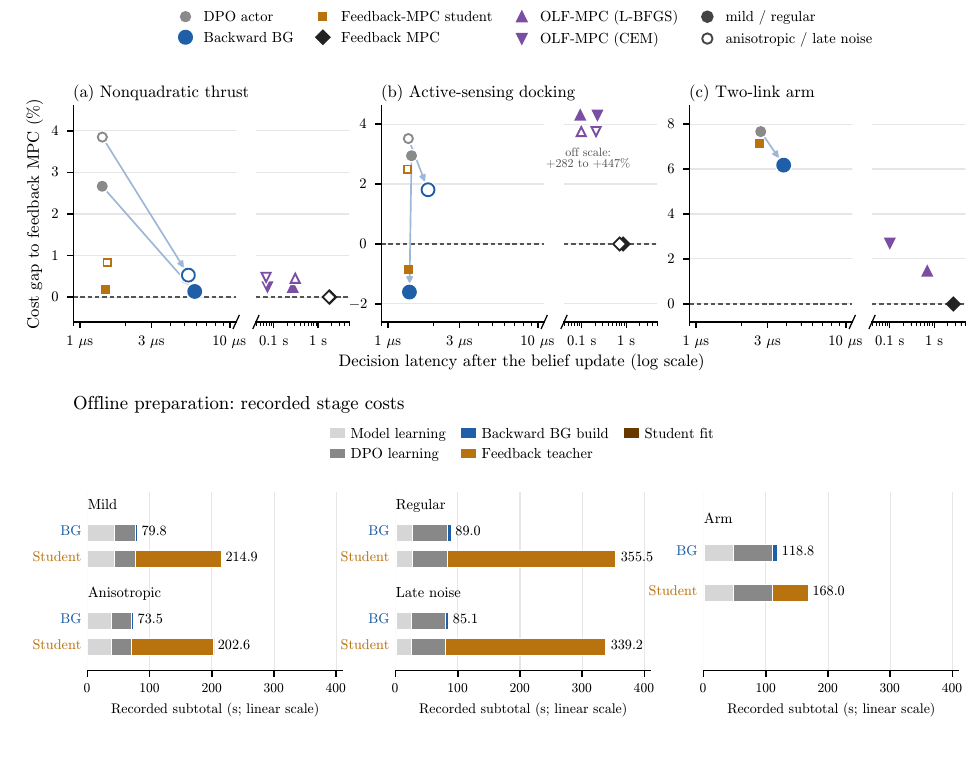}
\caption{\small Historical distinct-map deployment and mixed-hardware preparation in \textbf{(a)} nonquadratic thrust, \textbf{(b)} active-sensing docking, and \textbf{(c)} the two-link arm. \emph{Top:} eight-seed mean plant-cost gap $100(J/J_{\mathrm{FB}}-1)$ relative to online feedback MPC versus decision latency. Blue circles are the original task-section backward BG policies; orange squares are separately constructed feedback-MPC students with different storage maps. Filled/open symbols distinguish mild/anisotropic thrust and regular/late-noise docking. Arrows show DPO-to-BG correction; docking OLF costs are off scale. \emph{Bottom:} recorded eight-seed mean stage times for these policies: common model and DPO learning, followed by backward BG construction or feedback-teacher optimization and student fitting. Bar labels give recorded subtotals. These recorded subtotals are not same-device speed comparisons. Timing scope, hardware, and excluded stages are documented in Sections~\ref{sup:offlineaccounting}--\ref{sup:nativekernels}.}
\label{fig:historicaldeployment}
\end{figure}

The lower row of Figure~\ref{fig:historicaldeployment} accounts for the same deployed policies as the upper row: target-validated backward BG in thrust and the arm, the rebuilt physical-action fit in docking, and feedback-320, feedback-30 projected, and feedback-60 students, respectively. It does not substitute the earlier BG storage maps. Both methods inherit the same fitted model and DPO actor within each condition. The bars include their original learning times, then either the full recorded backward BG build (including its own regression and checks) or feedback-teacher construction and student fitting. No separate distillation is applied to BG.

For additive accounting, each segment is the mean recorded time over eight seeds, and each bar label is the mean of per-seed sums. These statistics differ from the construction-time medians elsewhere. The recorded subtotals for BG/student are $79.8/214.9$ seconds in mild thrust, $73.5/202.6$ in anisotropic thrust, $89.0/355.5$ in regular docking, $85.1/339.2$ in late-noise docking, and $118.8/168.0$ in the arm. Model learning and feedback-teacher construction use archived A6000 runs; thrust DPO learning and the current BG rebuilds use local RTX 4070-series runs, while docking/arm DPO learning uses A6000 runs. Hence these sums are historical stage accounting, not synchronized, same-host training benchmarks, and do not establish a hardware-controlled speed ratio.

Node preparation, data collection, held-out evaluation, native export, and exploratory tuning are excluded. Missing components are not treated as zero-cost work. BG correction and regression are shown jointly because separate timers are not available for every task. The arm student-fit timer covers both raw and projected variants; the plotted policy uses the projected variant, and the small recorded joint fit cost is retained without an unmeasured allocation. Thrust teacher time includes its OLF initialization and recorded checks. The per-seed stage ledger, source hashes, and exclusions are retained with the figure. The lower panels quantify recorded preparation of the tested policies rather than minimum achievable implementation cost.

\subsection{Native policy kernels and acados}\label{sup:nativekernels}
Native policy times use one AMD Ryzen 9 7950X3D CPU thread with preloaded weights and beliefs, excluding filtering, Python-call overhead, construction, and compilation. Main-paper marker $\mathrm C$ denotes the eight-seed median at date zero; $\mathrm C_0$ denotes seed-0 audits of target-validated thrust and action-fit docking. Repeated calls are not independent fits. Native/PyTorch action discrepancies are at most $6.06\times10^{-6}$ across these audits and $1.05\times10^{-6}$ for target-validated arm maps. Section~\ref{sup:regressionrefit} gives map storage and timing details; archived CSVs retain dates, seed scopes, and repetitions.

\paragraph{Execution of the matched checkpoints.}
Table~\ref{tab:matchednative} measures the saved compact/rich BG and student maps whose costs appear in Table~\ref{tab:matchedstorage}. It covers all 160 policies from 40 condition--seed sets, with eight measured seeds in every condition. Arm Eval.~1 and Eval.~2 use the same policies and share one timing row.

The audit uses one pinned logical CPU (core index 2) on an AMD Ryzen 9 7950X3D, GCC 16.2.0 with \texttt{-O3 -fno-fast-math}, and double arithmetic retaining the stored parameter values. Each native call evaluates the actor, features, selected kernel, cap, and action transform from a supplied belief. Filtering, data loading, offline work, and per-action Python/FFI overhead are excluded. All four methods cycle through the same 64 construction beliefs per date, with 2,000 warm-up calls and seven repetitions of 20,000 calls; method order rotates. Each table entry is the median across seed medians, with the seed range in brackets. Repetitions are timing measurements, not additional trained seeds.

\begin{table}[htbp]
\centering\footnotesize
\setlength{\tabcolsep}{3pt}
\caption{Native matched-policy action time at date zero, in $\mu$s: seed median [minimum, maximum]. $n$ is the number of measured model seeds. These are the realized maps from Table~\ref{tab:matchedstorage}, with all eight cost-evaluation seeds measured in each condition.}
\label{tab:matchednative}
\begin{tabular}{lcrrrr}
\toprule
& & \multicolumn{2}{c}{Compact map} & \multicolumn{2}{c}{Rich map}\\
Task / condition & $n$ & BG & Student & BG & Student\\
\midrule
Thrust, mild & 8 & 1.49 [1.46,1.50] & 1.48 [1.46,1.49] & 5.46 [5.43,5.50] & 5.46 [5.43,5.50]\\
Thrust, anisotropic & 8 & 1.47 [1.46,1.48] & 1.47 [1.46,1.50] & 5.44 [1.46,5.47] & 5.44 [5.43,5.46]\\
Arm & 8 & 2.39 [2.34,2.44] & 2.39 [2.35,2.42] & 3.39 [3.34,3.45] & 3.37 [3.34,3.44]\\
Docking, regular & 8 & 1.47 [1.43,1.51] & 1.48 [1.43,1.51] & 2.50 [2.44,2.52] & 2.49 [2.44,2.52]\\
Docking, late noise & 8 & 1.48 [1.43,1.66] & 1.48 [1.44,1.57] & 2.51 [2.46,2.63] & 2.51 [2.46,2.57]\\
\bottomrule
\end{tabular}
\par\smallskip\begin{minipage}{\linewidth}\footnotesize
In anisotropic thrust, date-zero validation for BG seed~6 retained the polynomial-only map; this accounts for the $1.46\,\mu$s minimum in the rich candidate family.
\end{minipage}
\end{table}

Across 147,456 audited policy--state inputs at all dates, native C and float64 PyTorch evaluations of the same frozen maps agree within $2.30\times10^{-14}$. The maximum discrepancy from archived float32 evaluations is $1.33\times10^{-5}$, accounted for by float32 arithmetic differences on these inputs. Checkpoint hashes, raw actions, initial diagnostic criteria and outcomes, and precision checks remain in the archive. Timing also covers dates 8 and 15 in thrust/arm and 6 and 11 in docking, with all repetitions and seed ranges archived. Realized kernel selection can vary across methods, dates and seeds, so a shared candidate class need not imply identical latency.

\paragraph{Feedback-student execution in Figure~\ref{fig:historicaldeployment}.}
The student points use a separate date-zero native-C audit of all eight saved feedback-target policies per condition; the DPO actors were timed as controls. Each kernel cycles through 64 common DPO-visited beliefs for seven repeats of 50,000 calls after 2,000 warm-up calls, rotating method order; the plotted student times are medians of eight seed medians. Native/PyTorch parity at all dates in the 40 condition--seed combinations has maximum error $5.30\times10^{-7}$. Student labels use feedback-320 in thrust, projected feedback-30 in the arm, and feedback-60 in docking (Section~\ref{sup:feedbackdistill}); their costs are the archived paired evaluations. The plotted backward BG points instead use the task-section policies and their $\mathrm C$ or $\mathrm C_0$ timings, including the target-validated thrust/arm maps and the physical-action docking fit.

\paragraph{Thrust acados inputs and protocol.}
The solver audit fixes one held-out belief for each condition, seed, and date $0,8,15$ (48 beliefs). Acados timings take each seed's median over nine repetitions, then the median across eight seeds. Plans start from independent model-LQR initialization at every audited date; previous plans are not reused. Table~\ref{tab:currentacados} reports date-zero times and the accompanying solver-status counts.

\begin{table}[htbp]
\centering\small
\caption{Date-zero acados thrust audit, median milliseconds over eight seeds. Native time covers the solver; full decision time also covers Python-side preparation and action extraction. The last column counts status-zero and iteration-limit cases among the eight seeds. These fixed-belief checks have no closed-loop plant-cost evaluation.}\label{tab:currentacados}
\begin{tabular}{lrrr}
\toprule
Planner / condition & Native & Full decision & Status 0 / 2\\
\midrule
OLF SQP / mild &0.187&1.104&8 / 0\\
OLF SQP / anisotropic &0.263&1.207&8 / 0\\
Feedback RTI / mild &1.93&3.41&8 / 0\\
Feedback RTI / anisotropic &2.54&4.02&8 / 0\\
Feedback SQP / mild &141&143&0 / 8\\
Feedback SQP / anisotropic &148&149&0 / 8\\
\bottomrule
\end{tabular}
\end{table}

Table~\ref{tab:currentacados} reports different solution qualities. OLF SQP succeeds on all 48 beliefs, with maximum independent Newton-reference objective discrepancy $7.11\times10^{-15}$. Feedback SQP reaches its 80-iteration limit at dates zero and eight (32 of 48 cases); separate maximum KKT stationarity residual is $6.82\times10^{-5}$. RTI completes one QP step, rather than certifying nonlinear convergence. At anisotropic date zero, its first-action distance to the numerical feedback reference has seed median 0.137. The PyTorch feedback reference is itself numerical, with maximum recorded gradient component $4.89\times10^{-4}$. The main paper therefore does not attach the reference's closed-loop cost to the faster RTI time.

\paragraph{Other native planner coverage.}
Docking OLF SQP is separately validated in closed loop and in the matched date-zero timing panel in Table~\ref{tab:compactdocking} of the main paper. Docking feedback acados instead has only fixed-belief feasibility diagnostics: all 16 RTI cases fail the declared constraint check; capped SQP passes eight regular and one late-noise case, all with iteration-limit status. Parallel diagnostic wall times are not standalone decision-latency measurements. No native feedback-acados arm result is included. The measured PyTorch planners remain distinct implementations.

The main paper also identifies PyTorch planner measurements on one A6000 by the marker $\mathrm G$. Those frozen-checkpoint, batch-one measurements pool dates $0,8,15$ on one common held-out DPO history per seed, with one warmup and three synchronized calls per date; each seed is summarized by its median, followed by the median across eight seeds. Planners start cold. Loading, shared filtering, construction, and cached coefficient preparation are excluded. The marker $\mathrm P$ instead denotes the docking seed-0, date-zero CPU pilot. These boundaries remain distinct from native kernel and acados times.

The source records are the 2026-09-27 native multiseed audit, its matched rocket acados report, and the docking OLF validation. The reproducibility package includes the aggregate CSVs and the audit reports in \texttt{timing/}.

\subsection{Derivative and numerical checks}\label{sec:rocketdiagnostics}
\paragraph{Derivative checks.}
Common-noise finite differences give thrust refreshed-tail gradient errors $2.07\times10^{-5}/3.66\times10^{-5}$. In the arm, the corresponding gradient error is $3.13\times10^{-4}$ and the smooth DPO-tail Hessian error is $2.12\times10^{-5}$. The actual refreshed-tail Hessian discrepancy reaches $0.0874$: radial and action caps prevent interpreting this as smooth-Hessian validation. Technical Appendix~\ref{app:regularity} specifies the distinction between a local curvature metric and the Hessian of the expected objective.

Arm test NLL improves from $-1.32966$ to $-1.54294$, only $0.00025$ above the plant-coefficient EKF score; both filters are approximate. Similar plant/fitted-model costs and persistent within-model planner gaps support investigating policy and representation error without identifying a unique cause.

\paragraph{Reproduction records.}
The experiment archive retains fitted-model and actor checkpoints, stored policy coefficients, random-seed and stream identifiers, paired episode outcomes, solver budgets, implementation versions, and file hashes. Current construction stores aggregate acceptance and safeguard statistics and raw node-fit errors; those raw errors precede deployment caps and are not off-node policy guarantees. Proposal, acceptance, and held-out evaluation paths are independent. The scalar DP reference and its grid/quadrature limits are specified separately in Section~\ref{sup:scalardp}.

\section{Numerical DP reference for scalar active-sensing docking}\label{sup:scalardp}
This stylized one-dimensional companion supplies a tractable numerical Bellman reference for an active-sensing belief state. It is a new problem, not an exact reduction of the three-state docking experiment in main paper, Section~\ref{sec:dockingmain}. The hidden state has Gaussian belief $\N(m_t,P_t)$ and obeys
\begin{align}
X_{t+1}&=X_t+0.5f_t+0.08\xi^X_{t+1},&
Y_{t+1}&=X_{t+1}+\eta_{t+1}(a_t),\nonumber\\
\operatorname{Var}(\eta_{t+1}(a_t))
&=0.025^2+\frac{0.45^2}{1+25a_t},&
(f_t,a_t)&\in[-0.4,0.4]\times[0,1].
\label{eq:scalardpmodel}
\end{align}
The process and observation noises are independent centered Gaussians, with $\xi^X_{t+1}\sim\N(0,1)$. There are 12 decisions and the initial belief is $\N(0.5,0.15)$. The physical stage and terminal costs are
$\ell(x,f,a)=0.02x^2+0.03f^2+0.03(a+0.5a^2)$ and
$\Phi(x)=0.3x^2+5x^4$. A zero sensor action retains a noisy baseline observation; it does not suppress observations. The final sensor action is zero because its observation arrives after the last control decision. The Gaussian belief transition and conditional Gaussian moments are exact for this model.

\paragraph{Numerical reference and policy costs.}
Belief-state DP uses a rectangular grid with 201 mean points, 81 variance points, 17 thrust choices, nine sensor choices, and seven Gauss--Hermite nodes. Its initial Bellman value is 0.066349. A combined refinement to 401 mean points, 161 variance points, 33 thrust choices, 17 sensor choices, and 11 quadrature nodes gives 0.066089. Thus the reference remains numerical: interpolation, finite action grids, quadrature, and extrapolation are not certified error bounds. Table~\ref{tab:scalardpcost} compares deployed policies on 512 common physical-noise paths for one trained DPO/BG seed; each controller follows its own belief trajectory. The restricted feedback MPC here optimizes belief-dependent future thrust \emph{and} sensing, whereas the three-state comparator in main paper, Section~\ref{sec:dockingmain} uses a datewise sensing schedule. Near equality to DP in this scalar task therefore does not validate the three-state comparator as optimal.

\begin{table}[htbp]
\centering\small
\caption{Scalar active-sensing companion: one training seed and 512 paired physical closed-loop paths. Gaps subtract the interpolated numerical DP policy's sampled cost; $\pm$ is one paired Monte Carlo standard error. Feedback MPC and OLF have finite optimization budgets.}
\label{tab:scalardpcost}
\begin{tabular}{lrr}
\toprule
Controller & Mean physical cost & Gap from DP \\
\midrule
Numerical DP policy & 0.066471 & reference \\
Restricted feedback MPC & 0.066518 & $0.000046\pm0.000341$ \\
Backward BG & 0.069550 & $0.003079\pm0.000779$ \\
DPO & 0.072665 & $0.006194\pm0.001231$ \\
OLF-MPC (L-BFGS) & 0.076225 & $0.009754\pm0.002713$ \\
DP with $a_t\equiv0$ & 0.076252 & $0.009781\pm0.002700$ \\
\bottomrule
\end{tabular}
\end{table}

Eight independently trained DPO actors and their one-sweep BG refinements were also evaluated on 20,000 \emph{common Gaussian innovation paths} per seed, integrating physical costs conditionally on each belief. Their mean conditional costs are 0.070864 (DPO) and 0.068430 (BG), with training-seed standard deviations 0.000847 and 0.000439. The numerical DP Bellman value is 0.066349. BG lowers DPO cost in all eight seeds; the mean paired reduction is $0.002434\pm0.000325$, where $\pm$ now denotes one \emph{training-seed} standard error. The ratio of mean improvements,
$(0.070864-0.068430)/(0.070864-0.066349)=53.9\%$,
describes the fraction of the DPO-to-numerical-DP gap closed by BG in this conditional-cost evaluation. It is distinct from the approximately 50\% ratio in the one-seed physical-path comparison of Table~\ref{tab:scalardpcost}.

\paragraph{Final-date local solve and regression.}
At $t=11$ there is no future feedback tail. With $\mu=m+0.5f$ and $S=P+0.08^2$, the exact conditional one-step objective is
\begin{equation}
q_{11}(m,P,f)
=0.02(m^2+P)+0.03f^2
 +0.3(\mu^2+S)+5(\mu^4+6\mu^2S+3S^2).
\label{eq:scalardpterminalq}
\end{equation}
Its bounded minimizer on $[-0.4,0.4]$ is unique and is computed by bisection. We replayed the \emph{original} BG construction at 64 DPO-visited nodes per seed (512 nodes total), using the saved policy checkpoints and the original derivative and acceptance random streams. The replay exactly reproduces every archived date-11 acceptance fraction, node action MSE, and accepted gain. All proposed terminal corrections were accepted; none reached the thrust trust-step bound. Table~\ref{tab:scalardpterminal} evaluates the original accepted target and the deployed regression map against the exact minimizer at those same nodes. One standard error across the eight training seeds is shown for conditional cost excess.

\begin{table}[htbp]
\centering\small
\caption{Final-date action diagnostic on the original 64 construction nodes per seed. Thrust RMSE and exact conditional one-step cost excess are first averaged over nodes, then over eight training seeds. The target is the accepted local BG proposal before regression.}
\label{tab:scalardpterminal}\label{tab:compactscalar}
\begin{tabular}{lrr}
\toprule
Action at $t=11$ & Thrust RMSE vs.\ exact & $q_{11}-q_{11}^\star$ \\
\midrule
DPO baseline & --- & $0.001400\pm0.000054$ \\
Accepted BG target & 0.003605 & $0.00000460\pm0.00000055$ \\
Fitted BG policy & 0.039431 & $0.000530\pm0.000080$ \\
\bottomrule
\end{tabular}
\end{table}

The target thrust lies within 0.01 of the exact action at 96.5\% of these nodes. Its mean one-step cost excess is less than 1\% of the fitted policy's mean excess. Thus the terminal construction finds nearly optimal node actions, while their regression into the executable policy loses most of that local precision. This isolates a representation bottleneck \emph{at the terminal construction nodes}; it does not allocate the entire policy's cost gap among theoretical error terms. Replacing only the deployed BG terminal thrust by the exact action, leaving all prior BG actions unchanged, reduces mean conditional full-policy cost from 0.068430 to 0.067486 across the eight seeds, a paired mean improvement of $0.000944\pm0.000182$ across seeds; all eight improve. Relative to the mean BG-to-Bellman gap of 0.002081, this is about 45\% descriptively, but a rollout--Bellman ratio is not an additive regret decomposition.

\paragraph{A second backward sweep.}
Applying a second sweep to the same DPO visitation nodes, with independent derivative and acceptance streams, improves only three of eight trained seeds. Mean conditional cost changes from 0.068430 after one sweep to 0.068711 after two, a paired difference of $+0.000281\pm0.000180$ across training seeds. This experiment does not support a reliable improvement from repeated sweeps; changing the future tail, local target, and regression together does not isolate Newton iteration.

\section{Quadratic thrust reference panel}\label{sup:lqgpanel}
This numerical calibration complements the affine LQG analysis in Technical Appendix~\ref{app:theory}. Table~\ref{tab:compactlqg} preserves the full reference comparison; the nonquadratic thrust, docking, arm, and matched-storage comparisons remain in the main paper. Action MSE uses common held-out DPO histories. The DPO-to-BG plant-cost differences are $-0.03694$ $[-0.03948,-0.03439]$ (mild) and $-0.06831$ $[-0.07126,-0.06536]$ (anisotropic), with exploratory 95\% paired $t$ intervals across eight seeds.
\begin{table}[H]
\centering\small
\caption{Quadratic thrust. Entries are mild / anisotropic eight-seed means; costs use 256 paired episodes per seed. Action MSE is relative to fitted Riccati feedback. The time marker specifies the implementation defined above.}
\label{tab:compactlqg}
\setlength{\tabcolsep}{4pt}
\begin{tabular}{lrrr}
\toprule
Controller & Plant cost & Fitted action MSE & Action time\\
\midrule
DPO &1.417321 / 1.849765&$4.58\!\times\!10^{-2}$ / $5.91\!\times\!10^{-2}$&$1.607/1.583\,\mu\mathrm s^{\mathrm C}$\\
DPO+150 &1.416492 / 1.844699&$4.74\!\times\!10^{-2}$ / $5.67\!\times\!10^{-2}$&$1.618/1.597\,\mu\mathrm s^{\mathrm C}$\\
Backward BG &1.380383 / 1.781456&$1.94\!\times\!10^{-6}$ / $1.72\!\times\!10^{-6}$&$0.063/0.063\,\mu\mathrm s^{\mathrm C}$\\
OLF-MPC (L-BFGS) &1.380386 / 1.781459&$9.53\!\times\!10^{-12}$ / $1.35\!\times\!10^{-10}$&$36.653/50.959\,\mathrm{ms}^{\mathrm G}$\\
OLF-MPC (CEM) &1.381653 / 1.784709&0.001451 / 0.023809&$4.459/4.269\,\mathrm{ms}^{\mathrm G}$\\
Fitted Riccati &1.380386 / 1.781459&$\simeq0$ / $\simeq0$&$<0.01\,\mu\mathrm s^{\mathrm C}$\\
Plant Riccati &1.377876 / 1.777820&0.002034 / 0.001971&$<0.01\,\mu\mathrm s^{\mathrm C}$\\
\bottomrule
\end{tabular}
\par\smallskip\footnotesize
LQG BG evaluates a stored map affine in the belief mean with phase-harmonic features, without a baseline-actor call; the DPO rows evaluate neural actors. Riccati kernel readings are near the measurement floor. Fitted-controller residuals of $10^{-11}$--$10^{-10}$ arise from different floating-point reference/execution paths; its mathematical self-error is zero. The plant oracle uses its own plant belief, so its fitted-reference error need not vanish.
\end{table}

\end{document}